\newtheorem{theorem}{Theorem}[section]
\newtheorem{proposition}[theorem]{Proposition}
\newtheorem{definition}[theorem]{Definition}
\newtheorem{lemma}[theorem]{Lemma}
\newtheorem{conjecture}[theorem]{Conjecture}
\newtheorem{corollary}[theorem]{Corollary}
\newtheorem{example}[theorem]{Example}
\newtheorem{remark}[theorem]{Remark}
\newtheorem{note}[theorem]{Note}
\newcommand{\Real}{\mathbb R}
\newcommand{\Net}{\mathbb N}
\newcommand{\one}{\mathbb{1}}
\newcommand{\zero}{\mathbb{0}}
\newcommand{\trop}[1]{\mathcal{#1}}
\newcommand{\tG}{\trop{G}}
\newcommand{\tT}{\trop{T}}
\newcommand{\To}{\longrightarrow }
\newcommand{\al}{\alpha}
\newcommand{\bt}{\beta}
\newcommand{\gm}{\gamma}
\newcommand{\Gm}{\Gamma}
\newcommand{\lm}{\lambda}
\newcommand{\abs}[1]{\left\vert#1\right\vert}
    \newenvironment{proof}{
    \smallskip
    \noindent\emph{Proof.}}{\hfill\(\Box\)
    \bigskip
    } \fi
\newcommand{\ifdef}[3]{\ifthenelse{\equal{#1}{true}}{#2}{#3}}
\def\pSkip{\vskip 1.5mm \noindent}
\newcommand{\tsqrt}[1]{\sqrt[{\operatorname{trop}}]{#1}}
\newcommand{\etype}[1]{\renewcommand{\labelenumi}{(#1{enumi})}}
\def\eroman{\etype{\roman}}
\newcommand{\Det}[1]{ \left|{#1}\right|}
\def\FunR{\operatorname{Fun} (R^{(n)},R)}
\def\CFunR{\operatorname{CFun} (R^{(n)},R)}
\def\CFunF{\operatorname{CFun} (F^{(n)},F)}
\def\CFunFF1{\operatorname{CFun} (F,F)}
\def\rstf{{\operatorname{l.s.}}}
\def\lstf{{\operatorname{r.s.}}}
\def\fl{f^\lstf}
\def\fr{f^\rstf}
\def\gl{g^\lstf}
\def\grt{g^\rstf}
\def\frb{\phi}
\newcommand{\spol}[2]{{#1}_{[#2]}}
\def\oset{W}
\def\osetb{U}
\def\gr{\Gm}
\def\grg{\gr^\nu}
\newcommand{\grgTf}[1]{\grg_{#1;\tT}}
\def\tp{\hat p}
\def\tq{\hat q}
\def\Fun{\operatorname{Fun}}
\def\tGz{\mathcal G_0}
\def\tTz{\mathcal T_0}
\def\tan{{\operatorname{tan}}}
\def\tng{{\operatorname{tan}}}
\def\ntng{{\operatorname{intan}}}
\def\ft{f^{\tng}}
\def\fm{f^{\ntng}}
\def\({\left(}
\def\){\right)}
\def\permanent{supertropical determinant}
\def\epsc{\in_{\operatorname{ir-com}}}
\newcommand{\tGinf}{\tG_{0}}
\def\res{\Re}
\def\tGz{\mathcal G_0}
\def\ghost{\text{ghost}}
\def\ldeg{\underline{\deg}}
\def\a{\alpha}
\def\la{\lambda}
\def\sig{\sigma}
\def\one{\mathbb{1}}
\def\zero{\mathbb{0}}
\def\sml{{\operatorname{sml}}}
\def\dom{{\operatorname{dom}}}
\def\deg{{\operatorname{deg}}}
\def\ra{a}
\def\bfa{\textbf{\ra}}
\def\rb{b}
\def\bfb{\textbf{\rb}}
\def\bfi{ \textbf{i}}
\def\bfj{\textbf{j}}
\newcommand{\dtr}[1]{\left|#1\right|}
\def\rzero{\zero_R}
\def\fone{\one_F}
\def\fzero{\zero_F}
\def\la{\lambda}
\newtheorem{thm}[theorem]{Theorem}
\newtheorem*{thm*}{Theorem}
\newtheorem*{dig*}{Digression}
\newtheorem{lem}[theorem]{Lemma}
\newtheorem{rem}[theorem]{Remark}
\newtheorem{prop*}{Proposition}
\newtheorem{prop}[theorem]{Proposition}
\newtheorem{defn}[theorem]{Definition}
\newtheorem*{examp*}{Example}
\newtheorem*{examples*}{Examples}
\newtheorem*{remark*}{Remark}
\newtheorem*{defn*}{Definition}
\newtheorem*{note*}{Note}
\begin{document}
\title[Supertropical Polynomials and Resultants] {Supertropical Polynomials and Resultants}

\author{Zur Izhakian}\thanks{The first author has been supported by the
Chateaubriand scientific post-doctorate fellowships, Ministry of
Science, French Government, 2007-2008}
\thanks{The second author is supported in part by the
Israel Science Foundation, grant 1178/06.}
\address{Department of Mathematics, Bar-Ilan University, Ramat-Gan 52900,
Israel} \address{ \vskip -6mm CNRS et Universit�e Denis Diderot
(Paris 7), 175, rue du Chevaleret 75013 Paris, France}
\email{zzur@math.biu.ac.il, zzur@post.tau.ac.il}
\author{Louis Rowen}
\address{Department of Mathematics, Bar-Ilan University, Ramat-Gan 52900,
Israel} \email{rowen@macs.biu.ac.il}

\subjclass[2000]{Primary 11C, 11S, 12D; Secondary 16Y60 }

\date{\today}

\keywords{Matrix algebra,  Supertropical algebra, Supertropical
polynomials, Relatively prime,  Resultant, B\'{e}zout's theorem.}


\begin{abstract} This paper, a continuation of \cite{IzhakianRowen2007SuperTropical}, involves a closer study of polynomials
of supertropical semirings and their version of tropical geometry
 in which we introduce the concept of
relatively prime polynomials and resultants,  with the aid of some
topology. Polynomials in one indeterminant are seen to be
relatively prime iff they do not have a common tangible root, iff
their resultant is tangible. The Frobenius property yields a
morphism of supertropical varieties; this leads to a supertropical
version of B\'{e}zout's theorem. Also, a supertropical variant of
factorization is introduced which yields a more comprehensive
 version of Hilbert's Nullstellensatz than the one given in \cite{IzhakianRowen2007SuperTropical}.
\end{abstract}

\maketitle




\section{Introduction and review}
\numberwithin{equation}{section}

The supertropical algebra, a cover of the max-plus algebra,
explored in   \cite{zur05SetNullstellensatz},
\cite{IzhakianRowen2007SuperTropical}, was  designed to provide a
more comprehensive algebraic theory underlying tropical geometry.
The abstract foundations of supertropical algebra, including
polynomials over supertropical semifields,  are given in
\cite{IzhakianRowen2007SuperTropical}. The corresponding matrix
theory is explored in \cite{IzhakianRowen2008Matrices}, and this
paper is a continuation, exploring the resultant of supertropical
polynomials in terms of matrices, and the ensuing applications to
the resultant. The tropical resultant has already been studied by
Sturmfels ~\cite{Sturmfels94,SturmfelsSolving},
 Dickenstein, Feichtner,  and  Sturmfels~
\cite{dickensteinDiscreminants}, and
Tabera~\cite{TaberaResultant}, but our purely algebraic approach
is quite different,  leading to a tropical version of B\'{e}zout's
Theorem (Theorem \ref{thm:Bezout}).

Since this paper deals mainly with polynomials and their roots, it
could be viewed as a continuation of
\cite{IzhakianRowen2007SuperTropical}, although we explicitly
state those results that we need.
 We briefly review the underlying notions. The underlying
structure is a
 \textbf{semiring with ghosts}, which we recall is a
triple $(R,\tG,\nu),$ where $R$ is a semiring with zero element
$\zero_R$ (often identified in the examples with $-\infty$, as
indicated below), and $\tGinf = \tG \cup \{ \zero_R \} $ is a
semiring  ideal, called the \textbf{ghost ideal}, together with an
idempotent semiring homomorphism
$$\nu : R \ \To \ \tG\cup \{\zero_R\}$$  called the \textbf{ghost map}, i.e.,
which preserves multiplication as well as addition. We write
$a^{\nu }$ for $\nu(a)$, called the $\nu$-\textbf{value}  of $a$.
Two elements $a$ and $b$ in $R$ are said to be
\textbf{$\nu$-matched} if they have the same $\nu$-value; we say
that $a$ \textbf{dominates} $b$ if $a^\nu \ge b^\nu$. Two vectors
are \textbf{$\nu$-matched} if their corresponding entries are
 $\nu$-matched.

\begin{note}\label{supertr} Throughout this paper, we also assume the key property called \textbf{supertropicality}:
$$a+b   =  a^{\nu } \quad \text{if}\quad a^{\nu
} =
 b^{\nu}.$$
In particular, $ a+a=  a^\nu, \ \forall a\in R  $.
\end{note}

A \textbf{supertropical semiring}  has the extra structure that
$\tG$ is ordered, and satisfies the property called
\textbf{bipotence}: $a+b = a$ whenever $ a^{\nu }
> b^{\nu }.$

  A \textbf{supertropical domain} is a
supertropical semiring for which $\tT(R) = R\setminus \tGinf$ is a
 monoid,  called the set of \textbf{tangible elements} (denoted as $\tT$ when $R$ is unambiguous),
such that the map $\nu _\tT : \tT \to \tG$ (defined as the
restriction from $\nu$ to $\tT$) is onto. We write $\tTz$ for $\tT
\cup \{ \rzero\}$.
 We also define a \textbf{supertropical semifield}
to be a commutative supertropical domain $(R, \tG, \nu)$ for which
$\tT$ is a group; in other words, every  tangible element of $R$
is invertible. Thus, $\tG$ is also a (multiplicative) group.
Since any strictly ordered commutative semigroup has an ordered
Abelian group of fractions, one can often reduce from the case of
a (commutative) supertropical domain to that of a supertropical
semifield.

When studying a supertropical domain $R$, it is convenient to
define an inverse function $\hat \nu: R \to \tT,$ which is a
retract of $\nu$ in the sense that $\hat \nu$ is $1_{\tTz}$ on
$\tTz$, and writing $\hat a$ for $\hat \nu (a)$, we have $(\hat a)
^\nu= a^\nu$ for any $a \in R.$ (When $\nu_\tT$ is~1:1, we take
$\hat \nu$ to be $\nu_\tT^{-1}$ on $\tG$. In general, the function
$\hat \nu$ need not be uniquely defined if $\nu_{\tT}$ is not
1:1.)

The following natural topology is very useful in dealing with
certain delicate issues.

\begin{definition}\label{def:topology}   For any supertropical domain   $R= (R,\tG,\nu)$,
we define the $\nu$-\textbf{topology} to have a base of open sets
of the form $$\oset  _{\a, \beta} = \{ a \in R: \a^\nu <  a^\nu <
\beta^\nu \} \quad \text{ and } \quad \oset  _{\a, \beta; \tT} =
\{ a \in \tT: \a^\nu < a^\nu < \beta^\nu \},  \quad
\text{where}\quad \a^\nu, \beta^\nu \in \tGz.$$ We call such sets
\textbf{open intervals} and \textbf{tangible open intervals},
respectively. We say that $R$ is \textbf{connected} if each open
interval cannot be written as the union of two nonempty disjoint
intervals.

$R^{(n)}$ is endowed with the product topology induced by the
 $\nu$-topology on $R$.
\end{definition}

\begin{rem} $ $ \eroman
\begin{enumerate}
    \item  Clearly $a^{\nu} $ is in the closure of $\{a\}$, since any
open interval containing
 $a^{\nu} $  also contains $a$. \pSkip

    \item The $\nu$-topology restricts to a topology on $\tT$, whose
base is the set of tangible intervals. \pSkip
    \item We often will assume  that $R$ (and thus $\tT$) is divisibly closed,  by passing to
    the  divisible closure $\{ \frac a n : a \in R,\ n
\in \Net \}$; see \cite[Section
3.4]{IzhakianRowen2007SuperTropical} for details. \pSkip
\end{enumerate}

\end{rem}

\subsection{The function semiring} Our main
connection from supertropical algebra to   geometry comes from
supertropical functions, which we view in the following
supertropical setting:

\begin{definition}\label{ghost0}  $\FunR$ denotes the set of functions from
$R^{(n)}$ to~ $R$. A function $f \in\FunR$ is said to be
\textbf{ghost} if
$$f(a_1, \dots, a_n) \in \tGz$$
 for every  $a_1, \dots, a_n
\in R$; a function
 $f\in \FunR$ is  called \textbf{tangible} if $$f(J) \not \subseteq \tGz$$
  for every
nonempty open set $J$ of $R^{(n)}$ with respect to the product
topology induced by Definition~\ref{def:topology}.

 $\CFunR$ consists of the   sub-semiring comprised
of functions in the semiring  $\FunR$ which are continuous with
respect to the $\nu$-topology.
\end{definition}

\begin{rem} $\FunR$ has the ghost map $\nu$ given by defining $f^\nu(a) = f(a)^\nu.$
Thus, $\FunR$ is a semiring with ghosts, satisfying
supertropicality, although $\FunR$ is not a supertropical semiring
since bipotence fails.
\end{rem}

\begin{rem} The product $fg$ of tangible
 functions is also tangible. (Indeed, by definition, for
any open interval~$\oset  _1$, $f(\oset  _1)$ is not ghost, so
therefore $\oset _2 = \{ a \in \oset  _1: f(a)\in \tT\}$ is a
nonempty open set. By definition, $g(\oset  _2)$ is not ghost, and
thus $fg(\oset  _2)$ is not ghost.)
\end{rem}

\begin{definition}\label{dist} Functions $f_1, \dots, f_m \in \CFunR$ are
$\nu$-\textbf{distinct} on an open set $\oset $ if there is a
nonempty dense open set $\oset ' \subseteq \oset $ on which
$f_i(\bfa)^\nu \ne f_j(\bfa)^\nu$ for all $i\ne j$ and all
$\bfa\in \oset '$.
\end{definition}

\begin{rem} To satisfy Definition \ref{dist}, it is enough to
find dense $\oset _{ij}\subseteq \oset $  for each $i \ne j$, such
that  $f_i(\bfa)^\nu \ne f_j(\bfa)^\nu$ for all $\bfa\in \oset
_{ij},$ since then one takes $\oset' = \bigcap_{i,j} \oset _{ij}.$

The idea underlying the definition is that there is a dense subset
of $\oset$, at  each point of which  only one of the $f_i$
dominates.
\end{rem}

\subsection{Polynomials}

 Any polynomial can be viewed naturally
in $\CFunR$. We say that two polynomials are \textbf{e-equivalent}
if their images in $\CFunR$ are the same; i.e., if they yield the
same function from $R^{(n)}$ to $R$. Abusing notation, we
sometimes write $f(\la_1, \dots, \la _n)$ for a polynomial $f \in
R[\la_1, \dots, \la _n]$, indicating that $f$ involves the
variables $\la_1, \dots, \la _n$.

We say that $f_j$ is \textbf{essential} in $f = \sum _i f_i \in
\CFunR$ if there exists some nonempty open set $\oset ' \subset R
^{(n)}$ for which
 $$f_j(\bfa)^\nu
> \sum _{i\ne j} f_i(\bfa)^\nu \quad  \ \text{for all  } \bfa =(a_1,\dots,a_n) \in
\oset '.$$  We define the set $\oset _{f_j}$ to be the set
  $\{ \bfa \in R^{(n)} : f_j (\bfa) \text { dominates } (\bfa)\}$.

  The case of an essential
monomial of a polynomial, defined in
\cite{IzhakianRowen2007SuperTropical}, is a special case of this
definition.  The \textbf{essential} part of a polynomial $f$ is
the sum of its essential monomials. Since the essential part of
$f$ has the same image in $\CFunR$ as $f$, we may assume that the
polynomials we examine are essential. Note that a polynomial is
ghost (as in Definition~\ref{ghost0}) iff its essential part is a
sum of ghost monomials.

\begin{rem} By definition, for any tangible function, there is
a nonempty open set on which it cannot be ghost. Thus, a tangible
essential summand of a polynomial $f$ must dominate at some
tangible value.
\end{rem}

Recall that the point $\bfa = (a_1, \dots, a_n) \in R^{(n)}$ is
called a \textbf{root} of a polynomial $f(\la_1, \dots, \la_n)$
iff $f(\bfa)$ is ghost. For $n=1,$ we say that a root of $f(\la)$
is \textbf{ordinary} if it is a member of an open interval that
does not contain any other roots of $f$. Likewise, a common root
of two polynomials $f(\la)$ and $g(\la)$ is \textbf{2-ordinary} if
it is a member of an open interval that does not contain any other
common roots of $f$ and ~$g$. (More generally, for $n > 1$, a root
$\bfa \in R^{(n)}$  of $f$  is said to be \textbf{ordinary} if
$\bfa$ belongs to some  open set $\oset_\bfa $ which contains a
dense subset $W'$ on which $f$ is tangible, but we only consider
the case $n=1$ in this paper.)

\begin{lem}\label{helpgh} Suppose $f = \sum _i f_i \in \CFunR$ is ghost on some nonempty open
set $\oset $ on which the $f_i$ are $\nu$-distinct. Then each
 summand $f_j$ of $f$ that is essential on $\oset $ is ghost on an open subset~$\oset_j$  of~$\oset $.\end{lem}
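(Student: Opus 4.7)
The plan is to identify, directly from the definition of essentiality, an explicit open subset $\oset_j \subseteq \oset$ on which $f_j$ coincides with $f$, and then to deduce that $f_j$ is ghost on $\oset_j$ because $f$ is.

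By hypothesis there is a nonempty open set $\oset' \subseteq \oset$ such that $f_j(\bfa)^\nu > \sum_{i\neq j} f_i(\bfa)^\nu$ for every $\bfa \in \oset'$; I take $\oset_j := \oset'$. For $\bfa \in \oset_j$, set $s(\bfa) := \sum_{i \neq j} f_i(\bfa)$. Since $\nu$ is a semiring homomorphism (in particular additive), $s(\bfa)^\nu = \sum_{i\neq j} f_i(\bfa)^\nu$, so the essentiality inequality becomes $f_j(\bfa)^\nu > s(\bfa)^\nu$. Bipotence in the supertropical semiring then yields
$$f(\bfa) \ = \ f_j(\bfa) + s(\bfa) \ = \ f_j(\bfa).$$
Since $f$ is ghost on $\oset \supseteq \oset_j$, we have $f(\bfa) \in \tGz$ for every $\bfa \in \oset_j$, and therefore $f_j(\bfa) \in \tGz$ for every $\bfa \in \oset_j$; that is, $f_j$ is ghost on $\oset_j$, as required.

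I anticipate no serious obstacle: the argument is a one-line application of bipotence once the witnessing open set has been extracted from the definition of ``essential''. Continuity of the $f_i$ plays no role here beyond guaranteeing that $\oset_j$ inherits its openness from the definition. The $\nu$-distinctness hypothesis does not appear to be needed for this lemma in isolation; I would expect it to be exploited in sharper follow-up statements that must track, simultaneously across all $j$, which summand dominates at a generic point of $\oset$ and that need to perform an analogous domination analysis among the $f_i$ themselves rather than only between $f_j$ and $\sum_{i\neq j} f_i$.
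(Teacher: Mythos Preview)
Your proof is correct and follows essentially the same idea as the paper's: on the open set where $f_j$ strictly dominates, bipotence in $R$ forces $f(\bfa)=f_j(\bfa)$, so the ghost hypothesis on $f$ transfers to $f_j$. The paper phrases this by contradiction (a tangible value of $f_j$ on the dominating set would make $f$ tangible there), but the content is identical; your observation that the $\nu$-distinctness hypothesis is not actually invoked here is also accurate.
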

\begin{proof} Otherwise the subset of $\oset $ on which $f_j$ dominates
contains a tangible element, and thus contains a tangible open
set, contrary to hypothesis.\end{proof}

\begin{rem}
 We say that  a function $f$ is \textbf{tangible at} $\bfa\in R^{(n)}$ if $\bfa$
 is not a root of $f$, i.e., if $f(\bfa)\in \tT.$  We denote the set of these point as:
 $$\tT_f =\{ \bfa \in R^{(n)} \ : \ f(\bfa) \in \tT \}.$$

 Confusion could arise because a tangible polynomial need
not be tangible at every point. For example, the tangible
polynomial $(\la+2)(\la+1)$ is tangible  at all tangible points
\emph{except} at $2$ and $1$, where its  values are ghosts. It is
easy to see that a polynomial in one indeterminate is tangible iff
it is tangible  at all but a  finite number of the tangible
points. Thus, any polynomial whose essential coefficients are all
tangible is tangible.

 Given a
polynomial $f(\la_1, \dots, \la _n) = \sum \a_\bfi
\la_1^{i_1}\cdots \la _n ^{i_n},$   we define $\hat f$ to be $\sum
\hat \a_\bfi \la_1^{i_1}\cdots \la _n ^{i_n},$ a tangible
polynomial according to Definition~\ref{ghost0} (although $\hat
f(a)$ need not be tangible for $a\in \tT$). Note that $\hat f
(a)^\nu = f(a)^\nu$.
 \eroman
\end{rem}

Recall that the \textbf{supertropical determinant} $\Det{A}$ of a
matrix $A = (a_{ij})$ is defined to be the permanent, i.e.,
$\Det{A}= \sum_{\sig \in S_n} a_{1, \sig(1)} \cdots
a_{n,\sig(n)}$; cf.~\cite{IzhakianRowen2008Matrices}.

\section{Transformations of supertropical varieties}

 The \textbf{root set} of $f \in R[\lm_1, \dots, \lm_n ]$ is the
set
 $$Z(f) = \{  \bfa \in R^{(n)} \ | \ f(\bfa) \in \tGz\},$$
and $Z_{\tan}(f)= Z(f)\cap \tTz^{(n)} $ is called the
\textbf{tangible root set} of $f$.

The tangible root set provides a tropical version of affine
geometry; analogously, one would define the supertropical version
of projective geometry by considering equivalence classes of
tangible roots of homogeneous polynomials (where, as usual, two
roots are projectively equivalent if one is a scalar multiple of
the other). There is the usual way of viewing a polynomial
$f(\la_1, \dots, \la_{n})$ of degree $t$ as the homogeneous
polynomial $\la_{n+1}^t f(\frac{\la_1}{\la_{n+1}},
\dots,\frac{\la_n}{\la_{n+1}}) $, and visa versa. Since the
algebra is easier to notate in the affine case, we focus on that.

We need to be able to find transformations of supertropical root
sets, in order to move them away from ``bad'' points.

\begin{rem}\label{transf1} Suppose $f(\la_1, \dots, \la_n)\in
R[\lm_1, \dots, \lm_n].$

\begin{enumerate} \eroman
    \item Given   $\bfb = (\bt_1, \dots, \bt_n) \in \tT^{(n)}$, we define the
\textbf{multiplicative translation}
$$f_{(\bfb,\cdot)} = f(\beta_1\la_1, \dots, \beta_n\la_n).$$
Clearly, when the $\beta_i$ are invertible,
$$Z_{\tan}(f_{(\bfb,\cdot)}) = \{ (\bt_1^{-1}a_1 , \dots,
\bt_n^{-1}a_n): (a_1  , \dots, a_n) \in Z_{\tan}(f) \}.$$ Thus,
the roots of $f_{(\bfb,\cdot)}$ are multiplicatively translated by
$b$ from those of $f$. \pSkip

    \item  Given   $\beta   \in R$, define the
\textbf{additive translation}
$$
f_{(k,\bt,+)} = f(\la_1  , \dots, \la_{k-1}, \la_k +\beta,
\la_{k+1}, \dots, \la_n).$$ If $\bfa \in Z_{\tan}(f) $ where $\bfa
= ( a_1, \dots, a_n)$ with $a_k^\nu$ ``sufficiently small,'' then
$\bfa \in Z_{\tan}(f_{(k,\bt,+)}).$ Indeed, writing $f = \sum f_j
\la_k^j$ where $\la_k$ does not appear in $f_j,$ and dividing
through by the maximal possible power of $\la_k$, we may assume
that $f_0$ is nonzero, and thus dominates any root $\bfa = (a_1,
\dots, a_n)$ whose $k$-th component has small enough $\nu$-value.
Hence $f_0(a_1, \dots, a_{k-1}, a_{k+1}, \dots, a_n)$ is ghost,
and this dominates in $f_{(k,\bt,+)}$ as well as in $f$.
\end{enumerate}

\end{rem}

\subsection{The partial Frobenius morphism}

Another transformation comes from a morphism of supertropical root
sets which arises from  the \textbf{Frobenius property}, which we
recall from \cite[Remark
3.22]{IzhakianRowen2007SuperTropical}: \\
There is a semiring endomorphism $$ \frb: \Fun(R^{(n)},R )\To
\Fun(R^{(n)},R )$$  given by $\frb: f \mapsto f^m.$ We want to
refine this  for polynomials.

\begin{defn} Define the $k$-th $m$-Frobenius map ${\frb_k}^m
: R[\lm_1, \dots, \lm_n] \to R[\lm_1, \dots, \lm_n]$ given by
$${\frb_k}^m : f(\la_1, \dots, \la_n) \longmapsto  f(\la_1, \dots, \la_{k-1}, {\la_k}^m, \la_{k+1},\dots,
\la_n).$$
\end{defn}

\begin{lem}\label{Frobmor0} For any $k$ and $m$, the  $k$-th $m$-Frobenius map ${\frb _k}^m$ is a homomorphism of
semirings, which is in fact an automorphism when $m\in \Net$ and
  $R$ is a divisibly closed supertropical semifield.
\end{lem}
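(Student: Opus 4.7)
The plan is to first establish the homomorphism claim by direct verification, exploiting the fact that $\frb_k^{\,m}$ is a substitution map, and then obtain the automorphism claim by constructing a two-sided inverse $\frb_k^{\,1/m}$ under the divisibility hypothesis.

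For the homomorphism part, observe that $\frb_k^{\,m}$ replaces $\la_k$ by $\la_k^m$ and fixes the remaining variables. Writing $f = \sum_I a_I \la_1^{i_1}\cdots\la_n^{i_n}$ and $g = \sum_J b_J \la_1^{j_1}\cdots\la_n^{j_n}$ and recalling that the sum in $R[\la_1,\dots,\la_n]$ is collection of like monomials while the product is pairwise multiplication of monomials, one verifies directly that $\frb_k^{\,m}(f+g) = \frb_k^{\,m}(f)+\frb_k^{\,m}(g)$ and $\frb_k^{\,m}(fg) = \frb_k^{\,m}(f)\,\frb_k^{\,m}(g)$, the latter via the identity $\la_k^{m(i_k+j_k)} = \la_k^{mi_k}\la_k^{mj_k}$. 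Since $\one$ and $\zero$ are fixed, $\frb_k^{\,m}$ is a semiring endomorphism, and this step needs no supertropicality or divisibility hypothesis.

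For the automorphism claim, injectivity is immediate: distinct monomials of $f$ yield distinct monomials of $\frb_k^{\,m}(f)$ (the $k$-th exponents are merely rescaled by $m$), so equality of images forces equality of coefficients. The substance is surjectivity, for which we exhibit an inverse $\frb_k^{\,1/m}$ obtained by substituting $\la_k^{1/m}$ for $\la_k$. Divisible closure of the supertropical semifield $R$ supplies unique $m$-th roots on $\tT$, and these extend via the ghost map $\nu$ to unique $m$-th roots on $\tG$, so $a\mapsto a^{1/m}$ is a well-defined continuous map on $R$. This lets us interpret $\la_k^{1/m}$ as a bona fide object---either in the natural enlargement of $R[\la_1,\dots,\la_n]$ allowing rational exponents in $\la_k$, or equivalently as a continuous function in $\CFunR$ under the identification of polynomials with their induced functions. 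The same substitution argument as above shows that $\frb_k^{\,1/m}$ is a semiring homomorphism, and the identities $(\la_k^m)^{1/m} = \la_k = (\la_k^{1/m})^m$ then yield $\frb_k^{\,m}\circ\frb_k^{\,1/m} = \frb_k^{\,1/m}\circ\frb_k^{\,m} = \operatorname{id}$, so $\frb_k^{\,m}$ is an automorphism.

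The main obstacle is conceptual rather than technical: on the raw polynomial semiring with integer exponents, $\frb_k^{\,m}$ is only an embedding, with image the subsemiring of polynomials whose $\la_k$-degrees are divisible by $m$. Divisible closure of $R$ is precisely what is needed to make sense of $\la_k^{1/m}$ and hence to invert $\frb_k^{\,m}$ in either of the two equivalent enlargements above, which is why this hypothesis appears only in the automorphism part of the statement.
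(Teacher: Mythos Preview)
Your homomorphism argument is essentially the paper's: both verify additivity and multiplicativity directly from the substitution description $\frb_k^{\,m}(f)=\sum \al_\bfi \la_1^{i_1}\cdots\la_k^{mi_k}\cdots\la_n^{i_n}$ (the paper merely defers the additivity check to \cite[Proposition~3.21]{IzhakianRowen2007SuperTropical}, while you spell it out).

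Where you go further is the automorphism claim, which the paper's proof does not address at all. Your observation is well taken: on the literal polynomial semiring $R[\la_1,\dots,\la_n]$ with integer exponents, $\frb_k^{\,m}$ only hits polynomials whose $\la_k$-degrees are divisible by $m$, so it is injective but not surjective. Divisible closure of $R$ gives $m$-th roots in the coefficient semifield, not rational exponents on the indeterminates, so your proposed inverse $\frb_k^{\,1/m}$ must live in an enlarged ambient object---either the semiring of ``polynomials'' with exponents in $\frac{1}{m}\Net$, or (equivalently via e-equivalence) in $\CFunR$, where $\la_k^{1/m}$ is a legitimate continuous function because $R$ is divisibly closed. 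You identify exactly this and resolve it correctly; the paper's statement is tacitly read in one of these senses (compare its later use in Remark~\ref{Frobmor}, where the inverse on root sets $a_k\mapsto a_k^{1/m}$ is what is actually needed). So your treatment is not a different route but a more careful completion of the paper's own argument.
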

\begin{proof} Writing $f = \sum _{\bfi} \al _\bfi \la _1^{i_1}
\cdots \la _n^{i_n},$ summed over $\bfi  = (i_1, \dots, i_n)
\subset \Net^{(n)},$ we have $${\frb _k}^m(f) = \sum \al _\bfi \la
_1^{i_1} \cdots \la _k^{m i_k}\cdots \la _n^{i_n}.$$ It follows
just as in \cite[Proposition~3.21]{IzhakianRowen2007SuperTropical}
that for $g = \sum \bt _\bfi \la _1^{i_1} \cdots \la _n^{i_n},$
$${\frb _k}^m(f+g) = {\frb _k}^m(f)+{\frb _k}^m(g),$$ and
clearly
$${\frb _k}^m(f\, g) =
{\frb _k}^m(f)\; {\frb _k}^m(g).$$
\end{proof}

\begin{rem}\label{Frobmor} In the set-up of Lemma \ref{Frobmor0},   each Frobenius map ${\frb _k}^m$ defines a morphism of
root sets, given by $$(a_1, \dots, a_n) \mapsto (a_1, \dots,
a_{k-1}, {a_k}^{\frac 1 m}, a_{k+1}, \dots, a_n),$$ which we call
the \textbf{partial Frobenius morphisms}.
\end{rem}

\subsection{Supertropical Zariski topology and the generic method}

Since one of the most basic tools in algebraic geometry is Zariski
density, we would like to utilize the analogous tool here:
\begin{rem}\label{Zarden} Any
polynomial formula expressing equality of $\nu$-values that holds
on a dense subset of~$R^{(n)}$ must hold for all of $R^{(n)},$
since polynomials are continuous functions.
\end{rem}

Such a density argument is   used in Section~\ref{resulta}. There
is an alternate method to Zariski density for verifying that
identical relations holding for tangible polynomials must hold for
arbitrary polynomials. It is not difficult to write down a generic
polynomial over a semiring with tangibles and ghosts. Namely, we
let $\tilde R = R[\mu _0, \dots, \mu _t]$ where the $\mu_i$ are
indeterminates over $R$, and view the polynomial $\sum _{i=0}^t
\mu_i \la ^i \in \tilde R[\la];$ any polynomial $f = \sum \a_i \la
^ i \in R[\la]$ can be obtained by specializing the $\mu_i$
accordingly. However, one has to contend with the following
difficulty: Although this new semiring with ghosts $\tilde R$
satisfies supertropicality, it is not a supertropical semiring,
and so
 identical relations holding in supertropical semirings may well fail in $\tilde R$.

\section{Supertropical polynomials in one indeterminate}

This section is a direct continuation of
\cite{IzhakianRowen2007SuperTropical}; we focus on properties of
 common tangible roots of polynomials in the supertropical setting. Assume
throughout this section that $F$ is an $\Net$-divisible
supertropical semifield, with ghost ideal $\tGz$ and tangible
elements $\tT$. We view polynomials in $F[\lm]$  as functions,
according to their equivalence classes in $\CFunFF1 $, or
equivalently we consider the full polynomials {\cite[Definition
6.1]{IzhakianRowen2007SuperTropical}} which are their natural
representatives. Thus,
$$
\begin{array}{rcl}
\text{a polynomial $f(\la)$ is ghost} &  \iff   &  f(a)
\text{ is ghost for each } a \in F; \\[1mm]
 \text{a polynomial $f(\la)$ is
tangible}  &  \iff   & f(\oset_\tT ) \text{ is not ghost for each
tangible open interval } \oset_\tT \subset F .
\end{array}
$$

 A polynomial is called
\textbf{monic} if its leading coefficient is $\one_F$ or
$\one_F^\nu$ (i.e., $0$ or $0^\nu$ in logarithmic notation).

 We recall the following factorization:
\begin{thm}\label{factorpol}{\cite[Theorem 7.43 and Corollary 7.44]{IzhakianRowen2007SuperTropical}} \label{fullfact2}
Any monic full polynomial in one indeterminate has a unique
factorization of the form $f = \ft \fm$, where the
\textbf{tangible component} $\ft$ is the maximal product of
tangible linear factors $\lm + a_i$, and the \textbf{intangible
component} $\fm$ is a product of irreducible quadratic factors of
the form $\lm^2 +{ b_j}^\nu\lm + c_j$, at most one linear left
ghost $\lm^\nu + a_\ell$ and at most one linear right ghost $\lm +
a_r ^\nu$. (One obtains $\ft$ and $\fm$ from the factorization
called  ``minimal in ghosts.'')
\end{thm}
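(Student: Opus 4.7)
The plan is to read the factorization directly off the Newton polygon of the essential monomials of $f$. Writing $f=\sum_{i=0}^n \a_i\la^i$, let the essential monomials occur at indices $0=j_0<j_1<\cdots<j_s=n$; the upper concave hull of the points $(j_i,\a_{j_i}^\nu)$ has edges of horizontal length $m_i=j_i-j_{i-1}$ whose slopes $a_1^\nu\le\cdots\le a_s^\nu$ are exactly the root $\nu$-values of $f$ with multiplicities $m_i$. Since $f$ is full, the positions $j_i$, the values $\a_{j_i}^\nu$, and the tangible-versus-ghost status of each $\a_{j_i}$ are all uniquely recoverable from $f$ as a function, and this intrinsic data will supply both existence and uniqueness.

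For existence I would proceed edge by edge, classifying by the tangibility of the two endpoint coefficients. If both endpoints of an edge are tangible, the edge of length $m_i$ with slope $a_i$ contributes the factor $(\la+a_i)^{m_i}$ to $\ft$: direct expansion using supertropicality shows that this product is already a full polynomial whose ghost-filled monomials interpolate $f$ between indices $j_{i-1}$ and $j_i$, and dividing it out yields a monic full polynomial of strictly smaller degree, so induction applies. If some endpoint of an edge is ghost, then concavity forces this to occur only at an extremal vertex, i.e.\ on the top edge (touching $\a_n$) or the bottom edge (touching $\a_0$); for instance, a ghost interior vertex would be dominated by the ghost-filling coming from the adjacent edges, contradicting essentiality. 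On such an extremal edge of length $m$ with one ghost endpoint, one pairs the linear factors along the edge with the ghost endpoint to produce $\lfloor m/2\rfloor$ irreducible quadratics of the form $\la^2+b_j^\nu\la+c_j$, the middle ghost being forced on pairing, together with at most one residual linear ghost factor --- $\la+a_r^\nu$ at the constant end or $\la^\nu+a_\ell$ at the leading end. The ``at most one of each type'' assertion is then automatic because $f$ has only one leading and only one constant coefficient.

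Uniqueness follows since any factorization of the prescribed form must reproduce the edge data $(j_i,\a_{j_i})$ edge by edge, and these data are intrinsic to $f$; each tangible linear, each irreducible quadratic, and each extremal ghost linear is thereby determined. The main technical obstacle I anticipate lies in the existence step: one must verify that the extracted factor, when multiplied back into the remaining quotient, reconstructs $f$ as a \emph{full} polynomial and not merely as an element of $\CFunFF1$. This reduces to an arithmetic comparison on each edge of the ghost-filled intermediate $\nu$-values of $f$ against those produced by the expanded product, i.e.\ to the concavity identity governing the Newton polygon --- a structural computation which is where \cite[Theorem~7.43 and Corollary~7.44]{IzhakianRowen2007SuperTropical} concentrate their effort.
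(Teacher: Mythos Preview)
This theorem is quoted from \cite[Theorem~7.43 and Corollary~7.44]{IzhakianRowen2007SuperTropical} and is not proved in the present paper, so there is no proof here to compare against. That said, your argument has a genuine gap. Your structural claim that a ghost coefficient at an essential Newton-polygon vertex can occur only at an extremal position is false: take $f = \la^3 + 10^\nu \la^2 + 12^\nu \la + 12$ in logarithmic notation, where the successive root $\nu$-values $0,2,10$ are strictly increasing, so every monomial is essential, yet the coefficients at degrees $1$ and $2$ are ghost. Your justification, that an interior ghost vertex ``would be dominated by the ghost-filling coming from the adjacent edges,'' conflates strict vertices with quasi-essential points lying \emph{on} an edge; a strict vertex of the concave hull lies above any adjacent edge's extension and is never dominated, regardless of whether its coefficient is tangible or ghost.

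Because of this your case analysis is incomplete and your edge-by-edge factorization does not produce the right objects. The organization actually used (visible already in Remark~\ref{rmk:irr} and Proposition~\ref{canonform1} of this paper) breaks $f$ at its \emph{tangible} essential vertices into semitangibly-full pieces, each of which may span several Newton-polygon edges. Such a piece of degree $d\ge 2$, having tangible endpoints and ghost interior, has a single connected interval of tangible roots and contributes exactly \emph{one} irreducible quadratic $\la^2 + b_j^\nu \la + c_j$ to $\fm$ together with $d-2$ tangible linear factors to~$\ft$ --- not $\lfloor m/2\rfloor$ quadratics per edge. The linear ghost factors $\la^\nu + a_\ell$ and $\la + a_r^\nu$ appear only when the leading, respectively constant, coefficient of $f$ itself is ghost.
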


\begin{remark}\label{rmk:irr}
The factors of $\fm$ as described in the theorem are all
irreducible polynomials in $F[\lm]$. Furthermore, by \cite[Theorem
7.43]{IzhakianRowen2007SuperTropical}, their sets of tangible
roots are disjoint, and in fact one can read off these irreducible
quadratic factors from the connected components of
$Z_{\operatorname{tan}}(f)$.
\end{remark}

Denoting the linear tangible terms  as $p_i = \lm +a_i$ and the
quadratic terms as $q_j= \lm^2 + b_j^\nu\lm + c_j$, we write
\begin{equation}\label{eq:fact} f = (\lm^\nu + \al_\ell) (\lm +
\al_r^\nu) \prod_i p_i \prod_j q_j
\end{equation}
for this factorization of $f$ which is minimal in ghosts.

We say that a polynomial $g(\la_1, \dots, \la _n)$
\textbf{e-divides} $f(\la_1, \dots, \la _n)$ if, for a suitable
polynomial $h$, the polynomials  $f$ and $gh$ are e-equivalent. (A
weaker concept is given below, in Definition \ref{superdiv}).

\begin{remark}\label{rmk:tangFact} \begin{enumerate}
    \item Any tangible  polynomial of degree $n$ has at most $n$ distinct
tangible roots. \pSkip \item  If $f \in F[\lm]$ is a tangible
polynomial of degree $n$, then $f$ e-factors uniquely into $n$
tangible linear factors.

\end{enumerate}

\end{remark}

\begin{prop}\label{tangdiv} Suppose a polynomial $p$ e-divides $fg$ for $g$ tangible, and
$p$ is  irreducible nontangible. Then $p$ e-divides $f$.
\end{prop}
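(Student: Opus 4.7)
The plan is to invoke the unique factorization of \thmref{factorpol}. After scaling by leading coefficients (which are invertible in the semifield $F$), we may assume $f$, $g$, $p$, and some witness $h$ with $fg\eqR ph$ are all monic.

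Factor $f = \ft\fm$ as in \eqref{eq:fact}, so that $\ft$ is the product of the tangible linear factors and $\fm$ is the intangible component; likewise decompose $h = h^{\tng}h^{\ntng}$. Because $g$ is tangible, \remref{rmk:tangFact}(ii) supplies an e-factorization of $g$ into tangible linear terms, so we may take $g = g^{\tng}$. Regrouping both sides of $fg\eqR ph$ according to type,
$$
(\ft\, g^{\tng})\cdot \fm \ \eqR \ h^{\tng}\cdot \bigl(p\, h^{\ntng}\bigr).
$$
The factor $\ft\, g^{\tng}$ is tangible (product of tangible functions is tangible), $\fm$ is already in the intangible form, $h^{\tng}$ is tangible, and $p\, h^{\ntng}$ is a product of irreducible nontangible factors of the three types permitted in \thmref{factorpol}. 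Both displays therefore realize $fg$ in its minimal-in-ghosts form, and the uniqueness clause of \thmref{factorpol} forces $\fm \eqR p\, h^{\ntng}$. Hence $p$ e-divides $\fm$ and therefore e-divides $f$.

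The main obstacle is confirming that $p\cdot h^{\ntng}$ really sits in the standard intangible form of \eqref{eq:fact}: it must have at most one left and one right ghost linear. When $p$ is an irreducible quadratic this is automatic; when $p$ is a one-sided ghost linear, one has to rule out that $h^{\ntng}$ supplies a second ghost linear on the same side. This is controlled by \remref{rmk:irr}, which recovers each irreducible nontangible factor from the connected components of $Z_{\tan}(\cdot)$: the tangible roots of $p$ lie in $Z_{\tan}(fg)$, and since $g$ contributes only finitely many tangible roots (being tangible), almost all of $p$'s tangible roots already lie in $Z_{\tan}(f)$. The connected component of $Z_{\tan}(f)$ into which they fall pins down the matching irreducible factor of $\fm$, so that the grouping into $(h^{\tng})(p\, h^{\ntng})$ is indeed a legitimate minimal-in-ghosts factorization of $fg$, and the uniqueness conclusion $p \rDiv \fm$ goes through.
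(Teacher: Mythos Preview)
Your proof uses the same tool as the paper's---the unique factorization of \thmref{factorpol}---but introduces the witness $h$ and its decomposition, which the paper avoids entirely. The paper's argument is three lines: write $f=\ft\fm$; then $fg=(\ft g)\,\fm$ with $\ft g$ tangible (product of tangibles), so this \emph{is} the tangible/intangible split of $fg$; hence $(fg)^{\ntng}=\fm$, and since $p$ is an irreducible nontangible e-divisor of $fg$ it must appear among the irreducible factors of $(fg)^{\ntng}=\fm$.

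Your detour through $h=h^{\tng}h^{\ntng}$ is what creates the obstacle you flag in your second paragraph (whether $p\,h^{\ntng}$ sits in the canonical intangible form of \eqref{eq:fact}). The patch you offer via root sets is in the right spirit but is not a complete argument as written: ``almost all of $p$'s tangible roots already lie in $Z_{\tan}(f)$'' does not by itself pin $p$ down as a specific factor of $\fm$; you would still have to argue (separately for the quadratic and the one-sided ghost linear cases) that the relevant connected component of $Z_{\tan}(f)$ forces exactly that irreducible to appear. None of this is needed if you never introduce $h$: once $(fg)^{\ntng}=\fm$ is read off from $fg=(\ft g)\fm$, uniqueness of the minimal-in-ghosts factorization places every irreducible nontangible e-divisor of $fg$ inside $\fm$ directly.
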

\begin{proof} Write $f = \ft \fm$ as in
\eqref{fullfact2}
 where $\ft$ is the tangible component of $f$. Then $fg = \ft \fm g $ and
 $\ft g$ is tangible; hence $\ft g= (fg)^\tng$ and $p$ must divide $(fg)^{\ntng} = \fm$.
\end{proof}

We turn to the question of how to compare polynomials in terms of
their roots. The next example comes as a bit of a surprise.

\begin{example}\label{lem:sumPoly0}
Some examples of polynomials $f,g\in F[\la]$ such that $f+g$ is
ghost, but $f$ and $g$ have no common tangible root.
\begin{enumerate} \eroman
    \item  Suppose $f$ is a full polynomial, all but one of whose
monomials $h$ have a ghost coefficient, and $g=h$. For example,
take $f = (\la^2)^\nu + 2 \la + 3^\nu$ and $g = 2\la.$ Then $f+g$
is obviously ghost, but $g$ has no tangible roots at all; thus,
$f$ and $g$ have no common tangible roots. \pSkip

    \item

 In logarithmic notation, where $F = (\Real,\max,+)$, take $f
= \la (\la^\nu +1) = (\la^2)^\nu +1\la,$ and $g =  1\la + 0^\nu.$
Then
\begin{equation} f(a) =
\left\{%
\begin{array}{ll}
    1a  & \hbox{if } \ a^\nu <
1^\nu; \\
    (a^2)^\nu & \hbox{if } \ a^\nu \ge 1^\nu. \\
\end{array}%
\right.
\end{equation}
In particular, $f(a)$ is tangible for all $a$ on the tangible open
interval $(-\infty, 1). $ Also,
\begin{equation} g(a) =
\left\{%
\begin{array}{ll}
    0^\nu   & \hbox{if }  \ a^\nu \le
{-1}^\nu; \\
     1a   & \hbox{if }  \ a^\nu >  -1^\nu. \\
\end{array}%
\right.
\end{equation}
In particular, $g(a)$ is tangible for all $a$ on the tangible open
interval $(-1, \infty).$ Thus $f$ and $g$ have no common tangible
roots, although $f+g$ is ghost (since $f(a)=g(a)$ for all $a \in
(-1,1)).$

\end{enumerate}
\end{example}

One can complicate this example, say by taking $f = (\la^3)^\nu+
3(\la^2)^\nu +3 \la = (\la +3)(\la^\nu +0)\la$ and $g = 3\la^2+
2\la^\nu +0^\nu.$ Nevertheless, these are the ``only'' kind of
counterexamples, in the   sense of the
Proposition~\ref{lem:sumPoly} below.

\subsection{Graphs and roots}

In this subsection, we assume that the supertropical semifield $F$
is connected, in order to apply some topological arguments.

\begin{defn}\label{graphs} The \textbf{graph}  $\gr_f$ of a  function  $f\in \CFunF $ is defined as
the set of ordered $(n+1)$-tuples $(\bfa,f(\bfa))$ in $F^{(n+1)}$,
where $\bfa = (a_1, \dots, a_n) \in F^{(n)}$. Note that either
component of $\gr_f$ could be tangible or ghost, so in a sense the
graph has at most $2^{n+1}$ leaves.

The $\tG$-\textbf{graph} $\grg_f$ is defined as $\{ (\bfa^\nu,
f(\bfa)^\nu): \bfa \in F^{(n)} \}$; i.e., we project onto the
ghost values. (Note that if $\bfa^\nu = \bfb^\nu,$ then
$f(\bfa)^\nu = f(\bfb)^\nu$.) It is more convenient to consider
the \textbf{tangible} $\tG$-\textbf{graph} $$\grgTf{f} = \{
(\bfa^\nu, f(\bfa)^\nu): \bfa \in \tT^{(n)} \};$$ $\grgTf{f}$ can
be drawn in $n+1$ dimensions.
\end{defn}

In this paper, we   consider a polynomial $f \in F[\la]$ in one
indeterminate, so its $\tG$-graph lies on a plane, and
 is a sequence of line segments which can change slopes only at
 the tangible
 roots of $f$. We can describe the essential and quasi-essential monomials of $f$ as in
 ~\cite{IzhakianRowen2007SuperTropical}: Writing $f = \sum \al_i \la^i,$ and defining the slopes
 $\gamma _i = \frac
{\hat \al_{i+1}}{\hat \al_{i}},$  we see that the monomial $h =
\al _i \la ^i$ is essential only if $\gamma_{i-1}^\nu < \gamma
_i^\nu $, and $h $ is quasi-essential only if $\gamma_{i-1}^\nu =
\gamma _i^\nu  $. Note that when the monomial $h $ is essential
(at a point $a$), the $\tG$-graph $\grg_f$ for $f$ must change
slope at $a$. We say that a polynomial is \textbf{full} if each of
its monomials is essential or quasi-essential.

\begin{defn}\label{lefthalf}
We say that $f \in F[\la]$ is $\a$-\textbf{right} (resp.~
\textbf{left})  \textbf{half-tangible} for $\a \in \tG$ if $f$
satisfies the following condition for each $a\in \tT$:  $$f(a) \in
\tT \quad \text{ iff }\quad a^\nu
> \a
  \quad \text{(resp.}\quad  a^\nu < \a  ),$$
  which implies $f(a) \in \tGz$ for all $a \in R$ with $a^\nu < \al$ (resp.  $a^\nu > \al$).
  \end{defn}
 By definition, if $f$ is $\a$-right half-tangible, all roots
 of $f$ must have $\nu$-value $\le \a,$ and thus the tangible $\tG$-graph~$\grgTf{f}$ of $f$
 must have a
 single ray emerging from  $\a$. (The analogous assertion holds for left
 half-tangible.)

\begin{proposition}\label{lem:sumPoly}
If $f,g\in F[\la]$ are polynomials without a common tangible root,
with neither $f$ nor~$g$ being monomials,  and $f+g$ is ghost,
then $f$ is left half-tangible and $g$ is right half-tangible (or
visa versa); explicitly, there are $\a < \bt$ in $\tG$  such that
$f$ is $\bt$-left half-tangible, $g$ is $\a$-right half-tangible,
and $f(a)^\nu = g(a)^\nu$ for all $a$ in the tangible interval
$(\a, \bt)$. Furthermore, in this case, $\deg (f) > \deg (g)$ (and
likewise the degree of the lowest order monomial of $g$ is less
than the degree of the lowest order monomial of $f$.)
\end{proposition}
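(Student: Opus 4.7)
The plan is to analyze the tangible $\tG$-graphs of $f$ and $g$. Write $\phi(x) = f(a)^\nu$ and $\psi(x) = g(a)^\nu$ for $a\in\tT$ with $x = a^\nu$; these are convex piecewise-linear functions. Partition $\tT$ into $A = \{\phi > \psi\}$, $B = \{\phi < \psi\}$, and $C = \{\phi = \psi\}$. The assumption that $f+g$ is ghost translates immediately: on $A$ the sum equals $f$ as dominant summand, so $f$ must be ghost on all of $A$; symmetrically $g$ is ghost on $B$; on $C$ the sum is automatically ghost by supertropicality. The hypothesis of no common tangible root says $f$ and $g$ are never simultaneously ghost at a tangible point.

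The key geometric step is to show that no connected component of $A$ (nor of $B$) is bounded. Suppose for contradiction $A$ has a bounded open component $(a_1, a_2)$; by continuity $\phi(a_i) = \psi(a_i)$. If $\psi$ were linear on $(a_1, a_2)$, the convexity of $\phi$ combined with the boundary equalities would force $\phi \le \psi$ throughout $(a_1, a_2)$, contradicting $\phi > \psi$ on $A$. Hence $\psi$ must have a breakpoint somewhere in $(a_1, a_2)$; at such a breakpoint $g$ is ghost by supertropicality. But $f$ is already ghost on all of $A$, so that breakpoint is a common tangible root, contradicting the hypothesis. The same argument handles $B$.

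Consequently every component of $A$ or $B$ is an unbounded ray, so each of $A, B$ is a union of the left ray and/or the right ray. A case analysis --- using that the non-monomial polynomials $f$ and $g$ each have at least one tangible root (any two essential monomials of a polynomial produce one), and that such a root lying in the other polynomial's ghost region yields a common tangible root --- rules out the possibilities that $A$ or $B$ is empty, that $A$ or $B$ spans both rays, or that $A$ and $B$ meet at a single point (i.e.\ $C$ collapses). After possibly swapping $f$ and $g$, one is left with $B = (-\infty, \alpha)$, $A = (\beta, \infty)$, $C = [\alpha, \beta]$, with $\alpha < \beta$.

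The half-tangibility is then read off by observing that $\phi$ cannot have a breakpoint inside $B$ or at the endpoint $\alpha$ --- else $f$ would be ghost where $g$ already is --- so $\phi$ is linear on $(-\infty, \beta)$ with a single dominant monomial $c_j \la^j$ of $f$ whose coefficient $c_j$ is tangible; the breakpoint of $\phi$ at $\beta$ hands off to a dominant monomial $c_k \la^k$ on $A$ whose coefficient is ghost with $k = \deg f$, giving the $\beta$-left half-tangibility of $f$. The symmetric analysis on $\psi$ over $A$ and at $\beta, \alpha$ yields that $\psi$ is linear on $(\alpha, \infty)$ with dominant tangible $d_j \la^j$, so $\deg g = j$ and $g$ is $\alpha$-right half-tangible; $\phi = \psi$ on $(\alpha, \beta)$ forces $c_j^\nu = d_j^\nu$, hence $\deg f = k > j = \deg g$, and the non-monomiality of $g$ forces its lowest essential degree to be strictly less than $j$, completing the low-order comparison. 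The main technical obstacle is the case analysis for the structure of $A, B, C$ --- each degenerate configuration must be shown to produce a common tangible root, which ultimately reuses the convexity/no-breakpoint principle of the key step but applied at boundary points of $C$ or to intervals dominated by ghost coefficients.
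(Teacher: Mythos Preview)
Your proof is correct and follows essentially the same strategy as the paper's: both partition $\tT$ according to whether $f(a)^\nu > g(a)^\nu$, $<$, or $=$ (the paper's $W_{f;\tT}$, $W_{g;\tT}$, $S_\tT$ are your $A$, $B$, $C$), observe that $f$ must be ghost on $A$ and $g$ ghost on $B$, and then use convexity of the $\tG$-graphs to pin down the structure. The organization differs in one respect worth noting. The paper works from the inside out: it introduces the subsets $S_{f;\tT}, S_{g;\tT}\subset C$ where $f$ (resp.\ $g$) is tangible, uses connectedness to find a nonempty interval in $S_{f;\tT}\cap S_{g;\tT}$, and then argues via slope comparison that once one graph pulls ahead of the other it can never fall back. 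You work from the outside in: the ``no bounded component'' lemma for $A$ and $B$ is a clean convexity statement (a convex function agreeing with a linear one at two points lies below it in between, so a bounded component of $A$ forces a breakpoint of $\psi$ inside it, hence a common ghost point). This is the same slope-monotonicity idea, packaged more compactly, and it lets you reduce the global picture to a short case analysis on which unbounded rays $A$ and $B$ occupy. One small imprecision: after the breakpoint of $\phi$ at $\beta$, the new dominant monomial need not immediately be the top-degree one (there can be further breakpoints on $(\beta,\infty)$, all with ghost coefficients); but since slopes only increase, $\deg f \ge \phi'(\beta^+) > j = \deg g$ still follows, and your low-order comparison via $\ldeg f = j > \ldeg g$ is correct.
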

\begin{proof} In order for $f+g$ to be ghost,  $(f+g)(a)$ must be ghost for each $a \in F$,
 which means that either:
\begin{enumerate}
\item $f(a)$ is ghost of $\nu$-value greater than $g(a)$, \pSkip
\item $g(a)$ is ghost of $\nu$-value greater than $f(a)$, or \pSkip
 \item $f(a)^{\nu} = g(a)^{\nu}$.\end{enumerate}
  Let $\oset  _{f; \tT} $ (resp.~ $\oset  _{g ; \tT}$) denote the  (open) set of tangible elements satisfying Condition (1)
  (resp.~(2)). We are done unless $\oset  _{f; \tT} $ and $\oset  _{g; \tT}$ are
  disjoint,
  since any element of the intersection would be a common tangible root of $f$ and $g$.

Note that $f(a)$ must be ghost for every element $a$ in the
closure of $\oset _{f; \tT}$. (Indeed, if $f(a)$ were tangible
there would be some tangible interval $\osetb_\tT$ containing $a$
for which all values of $f$ remain tangible; then, $\osetb_\tT
\cap \oset _{f; \tT} \neq \emptyset$, contrary to definition
of~$\oset  _{f; \tT}$.) Likewise, $g(a)$ is ghost for every
element $a$ in the closure of~$\oset_{g; \tT}$.

If  $\oset  _{f; \tT} = \emptyset$, then $Z_{\tan}(g) = \tT$, and
any tangible root of $f$ is automatically a root of $g$. Hence, we
may assume that $\oset _{f; \tT}$ and likewise $\oset _{g; \tT}$
are nonempty.

Also, let $$S_\tT = \{ a\in \tT : f(a)^{\nu} = g(a)^{\nu}\}.$$ Let
$S_{f; \tT} = \{a \in S_\tT: f(a) \text{ is tangible} \}$ and
$S_{g; \tT} = \{a \in S: g(a) \text{ is tangible} \}$.  Since any
$a \in \tT$ cannot be a common root of $f$ and $g$, we must have
$f(a)$ or $g(a)$ tangible, thereby implying $S_{f; \tT} \cup S_{g;
\tT}= S_\tT.$ As noted above, $S_{f; \tT}$ is disjoint from the
closure of $\oset _{f; \tT} $.

Suppose $a$ is a tangible element in the boundary of~$\oset  _{f;
\tT} $ (which by definition is the complement of $\oset _{f; \tT}
$ in its closure). Then $f(a)^{\nu} = g(a)^{\nu}.$ As noted above,
$f(a)$ must be ghost; if $a$ also lies in the closure of $\oset
_{g; \tT} $, then $g(a)$ is also ghost, contrary to the hypothesis
that $f$ and $g$ have no common tangible roots. Since $\tT$ is
presumed connected, we must have $S_{f; \tT}\cap S_{g; \tT} \ne
\emptyset.$

Write $S_{f; \tT} \cap S_{g; \tT}$ as a union of disjoint
intervals, one of which we denote as $(\a, \bt)$. For $a'$ of
$\nu$-value slightly more than $\bt,$ suppose $a' \in \oset _{f;
\tT}.$ Then the slope of the tangible $\tG$-graph $\grgTf{f}$ of
$f$ at $a'$ must be at least as large as the slope of $\grgTf{g}$
at $a'$, and this situation continues unless $g$ has some tangible
root $a \in \oset _{f; \tT},$ contrary to hypothesis. Thus,
$g(a)^\nu < f(a)^\nu$ for each $a$ of $\nu$-value $> \beta,$
implying $f(a) \in \tGz$ for all such $a$, and thus, by
hypothesis, $g(a)\in \tT$ for all  $a$ of $\nu$-value $> \beta$.

We have also proved that $S_{f; \tT} \cap S_{g; \tT} = (\a, \bt)$
is connected, and its closure is all of $S_\tT$ since otherwise
$S_\tT$ has a tangible point at which the $\tG$-graphs,
$\grgTf{f}$ and $\grgTf{g}$, both change slopes and thus must both
have a tangible root. Hence, $f$ and $g$ are both tangible on the
interior of $S_\tT$.

By hypothesis, $g$ is not a monomial, and thus has some tangible
root, which must have $\nu$-value $< \a$. The previous argument
applied in the other direction (for small $\nu$-values) shows that
$g(a)$ is ghost and $f(a)$ is tangible for all $a$ of $\nu$-value
$< \a$.

Finally, since $f$ increases faster than $g$ for $a^\nu
>\bt,$ it follows at once that $\deg( f) > \deg( g);$  the last assertion follows
by symmetry. \end{proof}

Conversely, if $f,g$ satisfy the conclusion of
Proposition~\ref{lem:sumPoly}, then clearly $f+g$ are ghost. Thus,
a pair of polynomials whose sum is ghost is characterized either
as having a common tangible root or else satisfying the conclusion
of Proposition~\ref{lem:sumPoly}. In particular, two polynomials
of the same degree whose sum is ghost must have a common tangible
root.

\begin{example}\label{lem:sumPoly0} It is also instructive to consider the following example:
  $$f = (\lm+ 2)(\lm + 5 ^\nu)
  (\la +8^\nu)(\la +9), \ \text{ and  } \ g =  (\la +3)(\la +4)(\lm^\nu +7)(\lm + 10)
  .$$
We have the following table of values for $f$ and $g$:
$$\begin{array}{l|llllllllllll} a & 2 & 3 & 4 & 5 & 6 & 7 & 8 & 9 & 10 & 11 &
\dots
\\  \hline f(a) & 24^\nu & 25^\nu & 26^\nu & 27^\nu & 29^\nu & 31^\nu &   33^\nu  &
36^\nu &   40 &   44 & \dots
\\ g(a) & 24 & 24^\nu & 25^\nu & 27& 29  & 31^\nu &  34^\nu  &
37^\nu &   40^\nu &   44^\nu & \dots
\end{array}$$

Note that $\deg (f) = \deg (g) = 4$ and $f+g = \nu( \lm^4 + 10
\lm^3 +17 \lm^2 + 22 \lm +  24 )$ is ghost, whereas they have
exactly three ordinary common tangible roots, namely $3,4,$ and
$9$; each $a \in [7,8]$ is also a common tangible root.
\end{example}

\subsection{Relatively prime polynomials}

 In order to compare polynomials in terms of their roots, we need another notion. Given a polynomial
$f$, we write $\ldeg(f)$ for the degree of the lowest order
monomial of $f.$ For example, $\ldeg(\la^3 + 2 \la^2 + \la ^\nu )
= 1.$

\begin{defn}
Two   polynomials $f$ and $g$ of respective degrees $m$ and $n$
are \textbf{relatively prime} if there do not exist tangible
polynomials $ \tp$ and $ \tq$ (not both $\fzero$) with $\deg( \tp
)< n$ and $\deg ( \tq )< m,$ such that $ \tp f+  \tq g$ is ghost
with $\deg( \tp f)=\deg( \tq g)$ and $\ldeg( \tp f)=\ldeg( \tq
g)$.
\end{defn}

We say that two polynomials $f$ and $g$ have a \textbf{common
$\nu$-factor} $h$ if there are polynomials $h_1,h_2$ with $h_1^\nu
= h_2^\nu= h^\nu$,  such that $h_1$ e-divides $f$ and $h_2$
e-divides $g.$

\begin{remark}\label{relpr2}
Any monic polynomials $f$ and $g$ having a common $\nu$-factor $h$
are not relatively prime. Indeed, write $f= h_1 q$ and $g = h_2 p$
and thus
$$\tp f  + \tq  g =\tp h_1 q  + \tq  h_2 p = h_1   \tp q +  h_2 p \tq  =
\ghost$$ (since they are $\nu$-matched). On the other hand, two
non-relatively prime polynomials without a   common factor could
be irreducible; for example for $\lm + 2^\nu$ and $\lm +1$ we have
$(\lm + 2^\nu) 1 +(\lm +1) 1$ is ghost, but both are irreducible,
cf. Remark \ref{rmk:irr}.
\end{remark}

\begin{remark} \label{relpr1}  $ $
\begin{enumerate}
    \item
 If $\ldeg (f)$ and  $\ldeg (g)$ are both positive, then $f$ and $g$ cannot
be relatively prime, since they have the common $\nu$-factor
$\la$. Similarly, if $\ldeg (f) = 0$ and $\ldeg (g) > 0,$ then one
can cancel $\la$ from $g$ without affecting whether $g$ is
relatively prime to $f$. Thus, the issue of being relatively prime
can be reduced to polynomials having nontrivial constant term. But
then, cancelling powers of $\la$ from $ \tp$ and $\tq ,$ we may
assume that $ \tp$ and $ \tq $ also have nontrivial constant term.
Thus, $\ldeg( \tp f)=\ldeg( \tq g) = 0,$ so the condition that
their lower degrees match is automatic. \pSkip

\item Adjusting the leading coefficients in the definition, we may
assume that $f$ and $g$  are both monic. (However, $\tp$ and $\tq$
need not be monic, as evidenced taking $f = \la ^\nu +1$ and $g =
\la +3$ in logarithmic notation; then $2f+g$ is ghost.) \pSkip

\item  A nonconstant ghost polynomial $f$ cannot be relatively
prime to any nonconstant polynomial~$g$, since   $fh + g$ or
$f+hg$ is ghost, where $h$ is any polynomial of degree $|\deg
f-\deg g|$ with ``large enough'' coefficients or ``small enough''
coefficients respectively. \pSkip

\item   If $ \tp f + \hat  q g  $ is  ghost with $ \tp = (\la +a)
\tp_1$ and $\tq = (\la +a)\tq_1$ then $(\la +a)( \tp_1 f + \tq_1 g
)$  is  ghost. Hence, $ \tp_1 f + \tq_1 g $ is ghost at every
point except $a$, which implies $ \tp_1 f + \tq_1 g $ is ghost, by
continuity.
\end{enumerate}
\end{remark}

We also need the following observation to ease our computations.

\begin{lemma}\label{ghostmatch} Suppose the polynomial $f+g$ is ghost,
and $p,q\in R[\la]$ with $p^\nu = q^\nu.$ Then $pf +qg$ is also ghost.\end{lemma}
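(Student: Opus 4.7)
The plan is to verify the conclusion pointwise: for every $a \in F$ I will show that $(pf+qg)(a) = p(a)f(a) + q(a)g(a) \in \tGz$. The hypothesis that $f+g$ is ghost gives me, for each such $a$, that $f(a)+g(a) \in \tGz$, and I get to use $p(a)^\nu = q(a)^\nu$ throughout.

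I would split into three cases according to the comparison of $f(a)^\nu$ and $g(a)^\nu$. If $f(a)^\nu > g(a)^\nu$, then bipotence forces $f(a)+g(a) = f(a)$, so $f(a)$ itself lies in $\tGz$; multiplying by $p(a)$ keeps us in the ghost ideal, and since $p(a)^\nu = q(a)^\nu$ we have $(p(a)f(a))^\nu > (q(a)g(a))^\nu$, so bipotence again gives $p(a)f(a) + q(a)g(a) = p(a)f(a) \in \tGz$. The case $f(a)^\nu < g(a)^\nu$ is symmetric, using that $g(a) \in \tGz$ and that $q(a)g(a)$ dominates $p(a)f(a)$.

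In the remaining case $f(a)^\nu = g(a)^\nu$, the two products $p(a)f(a)$ and $q(a)g(a)$ are $\nu$-matched (their $\nu$-values coincide, being $p(a)^\nu f(a)^\nu = q(a)^\nu g(a)^\nu$), so by supertropicality (\remref{supertr}) their sum equals that common $\nu$-value, which is ghost. This exhausts all cases and proves $pf+qg$ is ghost as a function, hence ghost as an element of the relevant polynomial semiring modulo e-equivalence.

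There is no serious obstacle here; the argument is a bookkeeping exercise in bipotence, supertropicality, and the fact that $\tGz$ is a semiring ideal. The only subtlety worth flagging is that $p^\nu = q^\nu$ is essential precisely in the two bipotent cases, where without it the dominance between $p(a)f(a)$ and $q(a)g(a)$ could be reversed relative to the dominance between $f(a)$ and $g(a)$, breaking the argument.
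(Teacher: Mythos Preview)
Your proof is correct and takes a genuinely different route from the paper's. You argue pointwise: evaluate at each $a$, use bipotence and supertropicality on the values $f(a),g(a),p(a),q(a)$, and conclude $(pf+qg)(a)\in\tGz$. The paper instead works monomial-by-monomial: it writes $p=\sum\al_i\la^i$, $q=\sum\bt_i\la^i$ with $\al_i^\nu=\bt_i^\nu$, claims that each monomial $f_\ell$ of $f$ can be paired with a monomial $g_k$ of $g$ so that $f_\ell+g_k$ is ghost, and then observes that $\al_i\la^i f_\ell + \bt_i\la^i g_k$ is ghost since the multipliers are $\nu$-matched. Your approach is cleaner, since it relies only on the functional definition of ``ghost'' and on basic properties of bipotence, whereas the paper's monomial-pairing step (``for any monomial $f_\ell$ of $f$ there is some $g_k$\dots'') is stated rather tersely and needs a little unpacking. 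On the other hand, the paper's coefficient-level argument is closer to showing that $pf+qg$ has ghost essential monomials as a formal polynomial, not merely as a function. One tiny omission in your write-up: in the strict-inequality cases you implicitly use that multiplication by a nonzero $\nu$-value preserves strict order in $\tG$; the degenerate case $p(a)^\nu=q(a)^\nu=\zero_R$ gives $pf(a)+qg(a)=\zero_R\in\tGz$ directly.
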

\begin{proof}  Write $p = \sum \al_i \la ^i$ and $q = \sum \bt _i \la^i$ where
 $\al_i^\nu = \bt_i^\nu.$ For any monomial $f_\ell$ of $f$ there is some monomial $g_k$ of
 $g$ such that $f_\ell +g_k$ is ghost. But any monomial of $pf$ has the form $\al_i \la^i f_\ell,$
  which when added to $\bt_i \la^i g_k$ is
clearly ghost. \end{proof}


\begin{theorem}\label{thm:rootPrime} Over a connected supertropical semifield $F$, two  non-constant monic polynomials $f$ and~$g$ in $F[\lm]$ are not relatively prime iff $f$ and $g$  have a
common tangible root.
\end{theorem}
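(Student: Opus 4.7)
My plan is to handle the two directions separately. The easier direction---a common tangible root forces non-relative-primality---is constructive: I exhibit tangible witnesses $\tp$ and $\tq$ directly. The harder reverse direction leans on Proposition~\ref{lem:sumPoly} followed by an iterative degree-reduction.

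For ($\Leftarrow$), assume $a \in \tT$ is a common tangible root of $f$ and $g$. In the clean subcase where the linear factor $\lambda+a$ e-divides both $f$ and $g$ (i.e., $a$ is a root of the tangible components $\ft$ and $g^{\tng}$ of Theorem~\ref{factorpol}), write $f=(\lambda+a)f_1$, $g=(\lambda+a)g_1$, and set $\tp=\hat g_1$, $\tq=\hat f_1$; then $\deg\tp=n-1$ and $\deg\tq=m-1$ meet the degree constraints, and
\[
\tp f + \tq g \;=\; (\lambda+a)\bigl(\hat g_1 f_1 + \hat f_1 g_1\bigr).
\]
At every $c\in F$, both inner products have the common $\nu$-value $f_1(c)^\nu g_1(c)^\nu$, so supertropicality renders their sum ghost; hence $\tp f+\tq g$ is ghost, and the top/bottom degrees of $\tp f$ and $\tq g$ match because $f,g$ are monic (after a preliminary reduction via Remark~\ref{relpr1}(1) to $\ldeg f=\ldeg g=0$). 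The less clean subcase is when $a$ lies strictly inside the root interval of an irreducible quadratic factor $q$ of $\fm$ or $g^{\ntng}$; there $\lambda+a$ is not a $\nu$-factor of the corresponding polynomial, and I would instead use $q$ itself (or a tangible $\nu$-equivalent such as $(\lambda+a)(\lambda+c/a)$ obtained by adjusting the second root to match $q$) as the common structure, and rerun the same $\nu$-matching computation.

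For ($\Rightarrow$), assume tangible polynomials $\tp,\tq$---not both zero---satisfy $\tp f+\tq g$ ghost with $\deg\tp<n$, $\deg\tq<m$, and matching top and bottom degrees. After reducing via Remark~\ref{relpr1} so that $\ldeg f=\ldeg g=0$, neither $\tp f$ nor $\tq g$ is a monomial. I apply Proposition~\ref{lem:sumPoly} to the pair $(\tp f,\tq g)$: its half-tangible conclusion would force $\deg(\tp f)\neq \deg(\tq g)$, contradicting the hypothesis; hence $\tp f$ and $\tq g$ share a common tangible root $c$. If $f(c),g(c)\in\tG$, then $c$ is a common tangible root of $f,g$ and we are done. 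Otherwise, without loss of generality $f(c)\in\tT$, and since $\tT$ is a group in the semifield $F$, $\tp(c)f(c)\in\tG$ forces $\tp(c)\in\tG$; the same argument gives $\tq(c)\in\tG$ whenever $g(c)\in\tT$. By Remark~\ref{rmk:tangFact}(2) the tangible polynomials $\tp$ and $\tq$ e-factor into tangible linear factors, so $\lambda+c$ e-divides both, and Remark~\ref{relpr1}(4) lets me pass from $(\tp,\tq)$ to $(\tp_1,\tq_1)$ with $\tp_1f+\tq_1g$ still ghost and all degree conditions preserved. Iterating strictly lowers $\deg\tp$ and $\deg\tq$.

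The main obstacle I anticipate is the successful termination of this iteration, specifically the boundary case where (say) $\tp_k$ has become a nonzero constant while the common tangible root of $\tp_k f,\tq_k g$ produced by Proposition~\ref{lem:sumPoly} still arises from a root of $\tq_k$ rather than of $g$; a direct factoring step is no longer available, so one needs either a careful root-counting argument tracking the depletion of the finite tangible-root sets of the successive $\tp_i,\tq_i$, or---more conceptually, and in line with the paper's resultant-theoretic framework---a detour through the supertropical resultant of $f$ and $g$ (the permanent of the Sylvester matrix), translating both ``not relatively prime'' and ``common tangible root'' into the single condition that this resultant is ghost.
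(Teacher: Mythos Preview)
Your $(\Rightarrow)$ argument has a genuine gap at the step ``so $\lambda+c$ e-divides both.'' You correctly observe that $f(c)\in\tT$ forces $\tp(c)\in\tGz$ and that $g(c)\in\tT$ forces $\tq(c)\in\tGz$, but you then need \emph{both} conditions simultaneously to invoke Remark~\ref{relpr1}(4). If the common tangible root $c$ of $\tp f$ and $\tq g$ satisfies, say, $f(c)\in\tGz$ while $g(c)\in\tT$ (so $c$ is a root of $f$ but not of $g$), you learn only that $\lambda+c$ e-divides $\tq$, not $\tp$; there is no common linear factor to strip, and the iteration stalls. The obstacle you flag (when $\tp_k$ has already become constant) is a special case of this, but the mixed case can occur at any stage, and factoring from only one side immediately destroys the matching conditions $\deg(\tp f)=\deg(\tq g)$ and $\ldeg(\tp f)=\ldeg(\tq g)$ that you need to reapply Proposition~\ref{lem:sumPoly}. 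A root-counting patch is not straightforward either, since the same root of $f$ (or $g$) can recur as $c$ across several stages.

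The paper's proof of $(\Rightarrow)$ takes a different route that sidesteps this case split: assuming $f$ and $g$ have no common tangible root, it compares the tangible $\tG$-graphs $\grgTf{\tp f}$ and $\grgTf{\tq g}$ directly and argues that the two graphs can never cross at a single point, so they alternate between coinciding on intervals and one lying strictly above the other. On each stretch, every slope increase of $\grgTf{\tp f}$ attributable to a root of $f$ must be matched by a root of $\tq$ (since $g$ cannot supply a root there), and summing these forced roots over the whole line yields $\deg(\tq)\ge\deg(f)=m$, contradicting $\deg(\tq)<m$. Your suggested resultant detour is viable in principle (via Remark~\ref{easydir} together with the ``ghost resultant $\Rightarrow$ common root'' half of Theorem~\ref{mainthm2}(iii)), but that machinery is developed only later and constitutes an essentially different proof.

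For $(\Leftarrow)$, your clean subcase is fine and is essentially Remark~\ref{relpr2}, but the ``less clean'' subcase---where the common root $a$ lies in the interior of the root interval of an irreducible quadratic factor, so that $\lambda+a$ is not a $\nu$-factor of one of $f,g$---is only gestured at. The paper handles this by explicit case analysis on pairs of irreducible factors of degree $\le 2$ (tangible linear, left/right ghost linear, and quadratic $\la^2+\beta^\nu\la+\alpha$), constructing $\tp,\tq$ by hand in each of Cases~I--III and then assembling the general case via Lemma~\ref{ghostmatch}; your proposed tangible $\nu$-equivalent of the quadratic factor is in the right spirit but needs the same detailed verification.
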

\begin{proof}  We may assume that $f$ and $g$ are both monic.
In view of Remark \ref{relpr1}, we may also assume that $f$ and
$g$ are non-ghost, and have nontrivial constant term.

$(\Rightarrow)$ Suppose $f $ and $g $  are not relatively prime;
i.e., $\tp f + \tq g$ is ghost for some tangible polynomials $\tp$
and $\tq$, with  $\deg(\tp f) = \deg(\tq g)$ and $\ldeg(\tp f) =
\ldeg(\tq g)$. Since $\ldeg( f) = \ldeg( g)= 0,$ we may cancel out
the same power of $\la$ from both    $\tp$ and $\tq,$ and thereby
assume that $\tp f$ and $\tq g$ each have nontrivial constant
term. We proceed as in the proof of Proposition~\ref{lem:sumPoly},
but with more specific attention to the tangible $\tG$-graphs
$\grgTf{\tp f}$ and $\grgTf{ \tq g}$, cf.~Definition~\ref{graphs}.
We assume that $f$ and $g$ have no common tangible root. In other
words, $f(a)\in \tG$ implies $g(a) \in \tT$, for any $a \in \tT$,
and likewise $g(a)\in \tG$ implies $f(a) \in \tT$. Also, we may
assume that $\tp$ and $\tq$ have no common tangible root, by
Remark~\ref{relpr1}(4).

Let $\oset  _{\tp f ; \tT} = \{ a \in \tT: \tp f(a)^\nu> \tq
g(a)^\nu \}$, and $\oset  _{\tq g; \tT} = \{ a \in \tT:\tq
g(a)^\nu>\tp f(a)^\nu \}.$ By hypothesis, $\tp f(a')$ is ghost for
all $a'\in \oset _{\tp f ; \tT}.$ But any $a'\in \oset _{\tp f ;
\tT }$ is contained in a tangible open interval $\osetb_\tT $ for
which $\tp$ is tangible on $ \osetb _\tT \setminus \{ a'\},$ so by
assumption, $f(a)\in \tG$ for all $a \in \osetb _\tT  \setminus \{
a'\},$ and thus $f(a)\in \tG$ for all $a \in \osetb _\tT .$ For
all $a\in \oset _{ \tp f ; \tT} $, it follows that $f(a)\in \tG$
and thus $g(a) \in \tT$ . Likewise, for all $b\in \oset _{ \tq g
;\tT}$, we have $g(b)\in \tG$ and $f(b) \in \tT$.

Note that as we increase the $\nu$-value of a point, the slope of
the graph $\grgTf{f}$ of a polynomial $f$ can only increase;
moreover, an increase of slope in the graph indicates the
corresponding increase of degree of the dominant monomial at that
point. We write $\dom_{\tp f}(a)$ (resp.~$\dom_{\tq g}(a))$ for
the maximal degree of a dominant monomial of $\tp f$ (resp.~$\tq
g)$ at $a \in F$.

Let $$S_\tT = \tT \setminus (\oset  _{ \tp f ; \tT} \cup \oset _{
\tq g ; \tT } ) = \{ a \in \tT: \tp f(a)^\nu =\tq g(a)^\nu\}.$$
Clearly, $\dom_{\tp f}(a) = \dom_{\tq g}(a)$ for every $a$ in the
interior of $S _ \tT,$ since the graphs $\grgTf{\tp f}$ and
$\grgTf{\tq g}$ must have the same slope there.

By symmetry, we may assume that $f(a_\sml)\in\tG$ for $a_\sml^\nu$
small. The objective of our proof is to show that as $a \in F $
increases, any change in the slope of $\grgTf{\tp f}$ arising from
an increase of degree of the essential monomial of $f$  is matched
by corresponding roots of $\tq$, and thus $\deg (\tq )= \deg (f)$
(and $\deg (\tp )= \deg (g)$) --- a contradiction.

We claim that the graphs   $\grgTf{\tp f}$ and $\grgTf{\tq g}$ do
not cross at any single tangible point  (i.e. without some
interval in $S_\tT$). Indeed, consider an arbitrary tangible point
$a\in S_\tT$ at which the graphs of $\grgTf{\tp f}$ and
$\grgTf{\tq g}$ would cross, starting say with $\grgTf{\tp f}$
above $\grgTf{\tq g}$ before $a$ and $\grgTf{\tq g}$ above
$\grgTf{\tp f}$ after $a$.
 At this
intersection point, $\tp f(a)$ and $\tq g(a)$ must both be ghost,
so $f(b)$ must be ghost for $b$ of $\nu$-value $< a^\nu$ whereas
$g(b)$ must be ghost for $b$ of $\nu$-value $> a^\nu$. But this
yields a common root for $f$ and $g$ unless $f$ switches from
ghost to tangible and $g$ switches from tangible to ghost, so $a$
would be a common root of $f$ and $g$, yielding a contradiction.

This proves that any point $a$ at which the graphs $\grgTf{\tp f}$
and $\grgTf{\tq g}$ meet must lie on the boundary of~$S_\tT$.
 Continuing along $S_\tT$, suppose that $f$ has a root $b$ in the interior of $S_\tT$. Then the
slope of $\grgTf{\tp f}$ increases by some number matching the
increase~$k$ of degree in the essential monomial of $f$ at~ $b$;
this must be matched by an equal increase in slope in~$\grgTf{\tq
g}.$ But $g$ cannot have a root here, since $f$ and $g$ have no
common tangible roots; hence $b$ is a root of~$\tq$ of
multiplicity~$k$. Thus, all roots of $f$ in the interior of
~$S_\tT$ are matched by roots of $\tq$.

Next let us consider what happens between two points on subsequent
tangible intervals of $S_\tT$. At any boundary point $a'$ of
$S_\tT$, for $a$ of slightly greater $\nu$-value than $a'$, we
have $a \in \oset _{ \tp f ; \tT} \cup \oset _{ \tq g ; \tT};$ say
$a\in \oset _{ \tp f; \tT}$. This means $\dom_{\tp f}(a')
> \dom _ { \tq g} (a')$. Clearly $a'$ is a root of $\tp f,$ and furthermore,
since $g$ is tangible in $\oset _{ \tp f ; \tT},$ any increase in
$\dom_{ \tq g}(a')$ occurs because of changes in the essential
monomial of $\tq$, i.e., from roots of $\tq$. Thus, when we enter
$S_\tT$ the next time, say at $a''$, we see that $\dom_{ \tp
f}(a'') - \dom _{ \tp f}(a')$ is the number of roots of $\tq$
needed to increase the slope of $\tq g$ accordingly. But when we
are within $\oset _{ \tq g; \tT},$ there cannot be any tangible
roots of $f$, and thus the essential monomial of~ $f$ does not
change. Continuing until we reach $a'',$ we see that the only
 increase in degree coming from change of the
dominant monomial of $f$ must occur in $\oset _{\tp f; \tT} \cup
S_\tT$ and are thus matched by roots from $\tq$.

Looking at the whole picture, we see that both graphs $\grgTf{\tp
f }$ and $\grgTf{ \tq g}$ have slope 0 for small $\nu$-values of
$a$ (since both $\tp f $ and $ \tq g$ have nontrivial constant
terms). Either they coincide for small $\nu$-values of~$a$, and we
start in $S_\tT$, or else one is above the other. Assume that
$\grgTf{\tq g}$ starts above $\grgTf{\tp f}$. But any increase of
slope of $\grgTf{\tp f}$ entails the same increase of slope of
$\grgTf{\tq g}$ (since otherwise we would have a crossing at a
single tangible point), and thus a corresponding increase in $\deg
(\tp)$, since any tangible root of $f$ (before the crossing) would
be a common root of $f$ and $g$, contrary to hypothesis. Then the
crossing brings us to $S_\tT$, and we continue the argument until
the last interval in $S_\tT$, and then when we leave, the
analogous argument at the end shows that any increase in the upper
graph leads to a corresponding increase in the tangible polynomial
($\tp $ or $\tq$) in the other graph.

Combining these different stages shows that $\deg (\tq) \le \deg(
f),$ which is what we were trying to prove.

(Symmetrically, any contribution to $\dom_{\tq g}$ coming from
changes in the essential monomial of $g$ happens in $\oset _{ \tq
g} \cup S_\tT,$ and thus is matched by roots of $\tp$.)

$(\Leftarrow)$  Our strategy is to e-factor $f$ and $g$ into
$e$-irreducible polynomials, all of which have degree $\le 2$.
Thus, we suppose first that $f$ and $g$ are $e$-irreducible
polynomials of respective degrees $m$ and~$n$ ($\leq 2$) having a
common tangible root $a$, and consider the following cases
according to Theorem~\ref{fullfact2}: \pSkip

\emph{{Case I:}} Suppose $m = n =1$. If $f$ and $g$ are  both
tangible we are done,
    since then $f =g = \lm + a$. The cases when both $f$ and $g$ are linear left ghost
    or linear right ghost are also clear. Finally,   when $f = \lm^\nu  + \al_f$ and $g = \lm +
    \al_g^\nu$,  for $\al_f, \al_g \in \tT$, we must have $\al_f^ \nu  \leq   a ^ \nu  \leq \al_g^ \nu$. Thus $f+g = \lm ^\nu +
    \al_g^\nu$. \pSkip

\emph{{Case II:}} Suppose  $m = 2$ and $n = 1$, and let $f = \lm^2
+ \bt^\nu_f \lm +
    \al_f$ with $(\bt^2_f)^{\nu} >   \al_f^\nu$.
     For $g = \lm +
    \al_g^\nu$, we have $(\frac{\al_f}{\bt_f})^\nu  \leq a^\nu \leq \min\{\bt_f ^\nu,
    \al_g^\nu \}$, so $\al_f ^\nu \le (\bt_f \al_g  )^\nu$ and
    $$f + (\lm+ \bt_f)g =  (\lm^2)^\nu + (\bt_f^\nu  +\bt_f+\al_g^\nu )\lm + \al_f +(\bt_f \al_g )^\nu = (\lm^2)^\nu + (\bt_f^\nu +\al_g^\nu  )\lm   +(\al_g \bt_f)^\nu$$ is ghost.

     When $g = \lm^\nu +
    \al_g ,$ then $\max\left\{ \al_g^\nu, \big(\frac{\al_f}{\bt_f}\big) ^\nu \right\}  \leq a^\nu \leq
    \bt_f^\nu ,$ so $$f +  \(\lm + \frac{\al_f}{\al_g}\)g
    = \(\lm^2\)^\nu + \(\bt_f^\nu +\frac{\al_f}{\al_g}^\nu\) \lm +
    \al_f^\nu \ - \ \text{a ghost }
    .$$
    If $g = \lm +   \al_g $, then $ a = \al _g$ and  $\( \frac{\al_f}{\bt_f}\)^\nu \leq a ^\nu \leq
    \bt_f ^\nu$, implying  $f + (\lm + \frac{\al_f}{\al _g})g = \(\lm^2\)^\nu + \bt_f^\nu\ \lm +
    \al_f^\nu $ is ghost.

\emph{{Case III:}} Suppose $m=n =2$, and let  $f = \lm^2 +
\bt^\nu_f \lm + \al_f$ and $g = \lm^2 + \bt_g^\nu\lm +
    \al_g$, for $\al_f, \al_g \in \tT$  with $(\bt _f ^2) ^\nu >   \al_f^\nu $
and $(\bt _g ^2) ^\nu >   \al_g^\nu $.
      Then \begin{equation}\label{eq:2}
\max \bigg\{\(\frac{\al_f}{\bt_f}\)^\nu, \ \( \frac{\al_g}
{\bt_g}\)^\nu \bigg\} \ \le  \
     a^\nu \ \le \  \min\{ \bt^\nu_f, \bt^\nu_g
    \}.
\end{equation}
  By symmetry, we may assume that $\al_f ^\nu \ge \al_g ^\nu .$ We claim that there
  are   elements $x,y \in \tTz$ such that, for
   $\tp = \lm + x $ and $\tq = \lm + y$, the polynomial    \begin{equation}\label{eq:1} \tp f + \tq g = (\lm^3)^\nu + (\bt^\nu_f + x +
    \bt^\nu_g + y )\lm^2 + (x \bt^\nu_f  + \al_f + y \bt^\nu_g + \al_g ) \lm +
    (x \al_f  + y \al_g) \end{equation}
    is ghost.

 Indeed, take $y = \hat \nu\left(\max \{ \bt_f^\nu, \bt_g^\nu\}\right)$ and
$x =  \frac{\al_g}{\al_f}y$.    The constant term in \eqref{eq:1}
is $(y \al_g + y \al_g)=  y \al_g^\nu.$ Likewise, the coefficient
of $\lm^2$ is ghost since $\bt_f^\nu+ \bt_g^\nu$ dominates $y$ and
$x$. Finally, the linear term is ghost since $\bt_g ^\nu \ge
\(\frac{\al_f}{\bt_f}\)^\nu$
 by the inequality~\eqref{eq:2}, implying
 $$y \bt_g ^\nu \ge   \bt_f^\nu \frac{\al_f^\nu }{\bt^\nu_f} = \al_f^\nu.$$
(The case for $f$ or $g$   ghost is trivial, by
Remark~\ref{relpr1}.)

 In general, suppose $f$ and $g $ are not necessarily irreducible,
 and have the common tangible root $a\in \tT$.
Consider the factorizations of $f= \prod_i f_i$ and $g = \prod_j
g_j$  into irreducible (linear and quadratic) polynomials. Thus,
$a$ is a common tangible root of some $f_i$ and $g_j$ of
respective degrees $m_i,n_j \le 2$, and,  by the first part of the
proof, $\tp_i f_i  +  \tq_j g_j $ is ghost for suitable tangible
polynomials $\tp_i$ and $\tq_j$ with $\deg( \tp_i )< n_i$ and
$\deg ( \tq_j )< m_j$ and $\deg ( \tp_i f_i) = \deg(\tq_j g_j) $
and $\ldeg( \tp_i f_i)=\ldeg( \tq_j g_j)$. Let
$$ r = \prod_{t \neq i} f_t; \qquad s = \prod_{u \neq j}g_u .$$
Taking
  $\tp =   \tp_i \hat s$ and $q =  \tq_j \hat r,$ we have  $
\tp f +   \tq g$   ghost.  Indeed, since  $\tp_i f_i  + \tq_j g_j
$ is ghost, and $\hat r s$ and $r \hat s$ have the same
$\nu$-value, write
$$\tp f +   \tq g  =   \tp_i \hat s f_i  \prod_{t \neq i}  f_t +
\tq_j \hat r  g_j \prod_{u \neq j}g_u  =\tp_i  f_i \hat s r+ \tq_j
g_j \hat r s $$ which is ghost
 by Lemma~\ref{ghostmatch}, and the degrees clearly
match. \end{proof}

The contrapositive of Theorem \ref{thm:rootPrime} gives us the
following analog of part of B\'{e}zout's theorem:

\begin{corollary}\label{sharper}
Over a connected supertropical semifield $F$, if $f$ and $g$ and
are two polynomials with no tangible roots in common, then they
are relatively prime.
\end{corollary}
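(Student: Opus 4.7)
The plan is to recognize the corollary as the contrapositive of Theorem~\ref{thm:rootPrime}, and to reduce to the hypotheses of that theorem (non-constant monic polynomials).

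For the monic reduction, since $F$ is a semifield, the leading coefficient $\alpha$ of $f$ is invertible. If $\alpha \in \tT$, I would replace $f$ by $\alpha^{-1}f$; if $\alpha = \alpha_0^\nu \in \tG$, I would replace $f$ by $\alpha_0^{-1}f$, whose leading coefficient then becomes $\one_F^\nu$. In each case the rescaling is by a tangible scalar, so the set of tangible roots is preserved. Moreover, any would-be non-primeness witness $(\tp, \tq)$ transports to one for the rescaled pair by adjusting $\tp$ by the same scalar; the degree bounds on $\tp,\tq$ are unaffected. Do the same for $g$.

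Next I would dispose of the constant cases. If $f = \gamma \in \tT$, any non-primeness witness forces $\tq = \fzero$ (from $\deg \tq < 0$), so one would need $\gamma \tp$ ghost for some nonzero tangible $\tp$; but a product of tangible polynomials is tangible (cf.\ the remark preceding Definition~\ref{dist}), a contradiction. If $f = \gamma_0^\nu$ is a ghost constant, then $f$ is everywhere ghost, so every element of $\tT$ is a root of $f$; the no-common-root hypothesis then forces $g$ to have no tangible roots, but by the factorization in Theorem~\ref{factorpol} every non-constant monic polynomial in $F[\lm]$ has at least one tangible root (each irreducible factor listed there, linear tangible or quadratic $\lm^2+b^\nu\lm+c$, supplies one), so $g$ must itself be constant; then the conditions $\deg \tp < 0$ and $\deg \tq < 0$ force $\tp = \tq = \fzero$, and so $f,g$ are relatively prime by default. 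The case of $g$ constant is symmetric.

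With these reductions, both $f$ and $g$ are non-constant monic polynomials having no common tangible root, and Theorem~\ref{thm:rootPrime} applies directly to yield that they are relatively prime. All of the substance lies in that theorem; there is no genuine obstacle here, only bookkeeping for the degenerate cases.
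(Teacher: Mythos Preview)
Your approach is essentially the paper's: the corollary is stated immediately after Theorem~\ref{thm:rootPrime} as its contrapositive, with no further argument given. You simply spell out the reductions (monic via Remark~\ref{relpr1}(2), and the constant/degenerate cases) that the paper leaves implicit, so there is nothing to compare.
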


One should compare this result with \cite[Theorem
8.5]{IzhakianRowen2007SuperTropical}, which says that for any
  polynomials $f$ and $g$ in $F[\lm]$ with no tangible roots in
common, there exist tangible polynomials $\tp$ and $\tq$ such
that, in logarithmic notation,
\begin{equation}\label{alt} \tp f + \tq g = 0 +\la \cdot
\ghost.\end{equation} But this last result is not as sharp as
Corollary~\ref{sharper}, since $f = \la +0$ and $g = 2^\nu \la $
have the tangible root $0$ in common, whereas they satisfy
\eqref{alt}, taking $\tp = \tq = 0$.

\begin{example} Consider the polynomials $f = 2^\nu\la^2 + 4 \la$
and $g = \la + 1^\nu,$ and $\tq = \la +4.$  Then $g \tq = \la^2 +
4\la +5^\nu, $ so $f+g \tq = 2^\nu\la^2 + 4^\nu \la +5^\nu $ is
ghost. On the other hand, $f = 2\la (\la^\nu +2)$ has the same
roots as $\la^\nu +2$, whose tangible roots (the interval
$[2,\infty)$) are disjoint from those of $g$ (the interval
$(-\infty, 1]$). Note that $\deg (f) = \deg(\tq g )$,  but $ 1=
\ldeg (f)
> \ldeg(\tq g )$.
\end{example}

\section{The resultant of two polynomials}\label{resulta}

In this section, we consider polynomials in one indeterminate over
an arbitrary supertropical semiring $R$, with an eye towards
applying induction and eventually dealing with polynomials in an
arbitrary number of indeterminates. Our main task is to determine
when two polynomials are relatively prime. This depends on the
essential parts of the polynomials. Since we want to use full
polynomials in the sense of \cite[Definition
7.10]{IzhakianRowen2007SuperTropical}, we assume   throughout this
section  that the polynomials $f$ and $g$ in $R[\la]$, of
respective degrees $m$ and $n$, are quasi-essential, in the sense
that every monomial is quasi-essential. In other words, writing $f
= \sum \a_i \la^i,$ we may assume that
$$ \(\frac{\hat \a_i}{\hat \a_{i+1}}\)^\nu \le
 \(\frac{\hat \a_{i+1}}{\hat \a_{i+2}}\)^\nu$$
 for each $i$. The classical
method for checking relative primeness of polynomials is via the
resultant, which has a natural supertropical version.

\begin{remark}\label{rmk:sylMat}
 For any semiring $R$, suppose $f = \sum _{i=0}^m \al _i \lm ^i \in
R[\lm]$, and let $A_n(f) $ denote the $n\times (m+n)$ matrix
$$\left(
\begin{array}{ccccccccccc}
 \al_0 & \al_1 & \al_2 & \al_3 & \dots & \al_m &    &
  & &   \dots &
\\
   & \al_0 & \al_1 & \al_2 & \dots & \al_{m-1} & \al_m &     &   &    &    \\
   &    &  \al_0 & \al_1 & \dots & \al_{m-2} & \al_{m-1} & \al_m &  &    &     \\
 \vdots &  &   & \ddots & \ddots & \ddots & \ddots
 & \ddots &   &  & \vdots \\
   &    &   &  & \al_0 & \dots  & \dots  & \dots  & \dots   & \al_m  &     \\
   &   \dots  &   &  &  & \al_0  & \al_1  & \dots  & \dots   & \al_{m-1} & \al_m \end{array} \right),$$
where the empty places are filled by $\rzero$. Then for any
polynomial $p= \sum _{i=0}^{n-1} \gm _i \la^i,$ we have
$$\begin{array}{cccc} ( \gm_0 & \gm_1 & \dots & \gm_{n-1}) \end{array} A _n(f)=
\begin{array}{cccc}  (\mu_0 & \mu_1 & \dots  & \mu
_{m+n-1})\end{array},$$ where $pf = \sum _{i=0}^{m+n-1} \mu _i\la
^i.$ (This is seen by inspection, just as in the classical
ring-theoretic case, since negatives are not used in the proof).
\end{remark}

\begin{definition}
The  \textbf{resultant matrix} $\res(f,g)$ is the $m+n$ square
matrix $$ \res(f,g) = \(
\begin{array}{c}
   A_n(f) \\
  A_m(g) \\
\end{array}
   \).$$ The \textbf{resultant} of $f$ and
$g$ is the  \permanent\   $\dtr{ \res (f,g) }.$ When $g = \bt$ is
constant, we formally define $\dtr{ \res (f,g) } = \beta^m.$
(Thus, when both $f$ and $g$ are constant, $\dtr{ \res (f,g) } =
\one_R.)$
\end{definition}

\begin{remark}$ $
\begin{enumerate} \eroman
    \item

 $|\res(f,g)|=  |\res(g,f)|,$ since we pass from  $\res(f,g)$ to $\res(g,f)$  by permuting rows. \pSkip

\item  If $f =  \sum _{i=0}^m \a_i \la ^i$ and $g =  \sum _{j=0}^n
\bt_j \la ^j$, then $$ |\res(f,g)| = \left|
\begin{array}{ccccccc}
             \al_0  & \al_1 & \al_2 & \dots & \al_m &  & \cdots \\
            & \al_0  & \al_1 & \al_2 & \dots & \al_m  &   \\
             \vdots & & \al_0  & \al_1 & \al_2 & \dots   &   \\
                &   &  &  \ddots  &\dots &\ddots &  \vdots \\
                     \bt_0 & \bt_1 & \bt_2 & \dots & \bt_n &  &  \\
                   &  \bt_0 & \bt_1 & \bt_2 & \dots & \bt_n &     \\
                    &   &  \bt_0 & \bt_1 & \bt_2 & \dots  & \vdots     \\
                       &&      &\ddots &\dots &
                  \end{array} \right|.$$
For any tangible $c$, dividing each of the last $m$ rows by $c$
 shows $|\res(f,g)|= c^m|\res(f,\frac 1{c}g)|.$ Thus, it is easy to reduce to
 the case that  $g$ is monic,
 and likewise for $f$. We often make this assumption without
 further ado. \end{enumerate}
\end{remark}

We need to compute the precise $\nu$-value of $|\res(f,g)|$.
Towards this end, the following remark is useful.

\begin{remark}\label{nuremove} $|\res(f,g)|^\nu =|\res(\hat f,\hat g)|^\nu$. Indeed, by
definition, the entries of the matrices whose determinants define
$|\res(f,g)|$ and $|\res(\hat f,\hat g)|$ have the same
$\nu$-values, so their determinants  have the same $\nu$-values.
\end{remark}

\begin{remark}\label{rmk:1}

 By Remark \ref{rmk:sylMat}, for any $p= \sum  _{i=0}^{n-1}\alpha _i
\la ^i$ and $q = \sum _{i=0}^{m-1}\beta _i \la ^i$ in $R[\la]$ of
respective degrees $n-1$ and $m-1$, with $pf+qg = \sum
_{i=0}^{m+n-1} \mu _i \la ^i, $  we have
\begin{equation}\label{eq:11}
\begin{array}{cccccc} (\alpha_0  & \dots & \alpha _{n-1} & \beta_0 & \dots & \beta _{m-1})
\end{array} \res(f,g) = \begin{array}{cccccc}  (\mu _0 & \mu _1
& \dots  & \mu _{m+n-1}).\end{array}
\end{equation}
\end{remark}

The direction of our inquiry is indicated  by the next
observation.
\begin{rem}\label{easydir} If $f,g \in F[\la]$ are not relatively prime, then
 $|\res(f,g)|$ is a ghost. (Just take tangible  $p,q$ of
respective degrees $\le  n-1$ and $m-1$ such that $pf+qg$ is a
ghost, and apply Remark \ref{rmk:1}.)\end{rem}

We look for the converse: That is, if $|\res(f,g)|$ is ghost, then
$f$ and $g$ are not relatively prime, and thus have a common
tangible root.

\begin{example}\label{specialcases2}  $ $
\begin{enumerate} \eroman
    \item
 Suppose $f = \sum_{i=0}^m \a_i \lm^i $ and  $g = \beta_1\lm  + \beta_0
$.  The resultant $|\res(f,g)|$ is given
 by:

$$ |\res(f,g)| = \left|  \begin{array}{ccccc}
             \a_0  & \a_1 & \a_2 & \dots & \a_m  \\
                     \beta_0 &\beta_1  & &  &   \\
                    & \beta_0 &\beta_1  & &   \\
                  &    &\ddots &\ddots & \\
                 &     &    & \beta_0 &\beta_1
                  \end{array} \right| =\a_0 \beta_1^m  + \a_1 \beta_0 \beta_1^{m-1} + \a_2 \beta_0^2 \beta_1^{m-2} +
                  \cdots + \a_m \beta_0^m.$$
In particular, if $\beta_1 = \one,$ then $|\res(f,g)|=
f(\beta_0),$ which is a ghost iff $\beta_0$ is a  root of $f$ (as
well as of $g$). We conclude for $\beta_0,\beta_1$ tangible that
$|\res(f,g)|=
 $  is a ghost iff $f$ and $g$ have a common root. (Indeed, first
 divide through by  $\beta_1$ to reduce to the case $\beta_1 = \one,$
 and then apply the previous sentence.)
\pSkip

\item  Suppose $f = \sum_{i=0}^m \a_i \lm^i $ and $g = \lm  +
b^\nu $, for $b$ tangible. As in (i),  the resultant $|\res(f,g)|$
equals $f(b^\nu)$, which is a ghost iff $b^\nu \ge (\frac
{\a_0}{\a_1})^\nu,$ the root of $f$ having smallest $\nu$-value.
Again, the resultant is a ghost iff $f$ and $g$ have a common
tangible root. \pSkip

\item  Suppose $f = \sum_{i=0}^m \a_i \lm^i $ and $g = \lm^\nu  + b
$, for $b$ tangible. Now the resultant matrix has entries
$\one^\nu$ instead of $\one,$ so $|\res(f,g)|$ equals
$$\a_0^\nu  + \a_1^\nu b + \a_2^\nu b^2 +   \cdots + \a_{m-1}^\nu b^{m-1} + \a_m b^m,$$ which is a ghost iff the $\nu$-value of $b$ is at most
that of $\frac {\al_{m-1}}{\al_m},$ the root of $f$ with greatest
$\nu$-value. Again, the resultant is a ghost iff $f$ and $g$ have
a common tangible root.

\end{enumerate}

\end{example}

\begin{example}\label{specialcases}  Suppose $f = \lm^2 + a^\nu \lm + b$ and  $g = \lm^2 + c^\nu \lm + d
$ are   quadratic quasi-essential polynomials over a supertropical
semifield; i.e.,
\begin{equation}\label{eq:exm1.1} \nu(a^2)
\ge b^\nu \quad \text{ and} \quad \nu (c^2) \ge d^\nu .
\end{equation} Accordingly
 \begin{equation}\label{eq:exm1.2}
Z_\tan(f) = \{ x \in \tT \ | \ (b / a)^\nu  \leq x^\nu \leq a^\nu
\} \quad \text{ and} \quad Z_\tan(g) = \{ x \in \tT \ | \ (d /c )
^\nu \leq x^\nu \leq c^\nu \}.
 \end{equation}

 The resultant $|\res(f,g)|$ of $f$ and $g$ is given
 by:

$$ |\res(f,g)| = \left|  \begin{array}{cccc}
             b & a^\nu &  \one &   \\
                      & b & a^\nu &  \one  \\
                     d & c^\nu & \one &    \\
                      &  d & c^\nu & \one
                  \end{array} \right| =d^2 + (acd)^\nu + (bd)^\nu +
                  (bc^2)^\nu + (a^2d)^\nu + (abc)^\nu +
                  b^2,$$
                  whose essential part, by \eqref{eq:exm1.1},  is
                  \begin{equation}\label{twoquads} d^2 + (acd)^\nu  +
                  (bc^2)^\nu + (a^2d)^\nu + (abc)^\nu +
                  b^2 = b f(c^\nu) + dg(a^\nu).\end{equation}
We show that $f$ and $g$ have no common tangible roots iff
$|\res(f,g)| \in \tT$, in which case obviously $|\res(f,g)| = b^2
+ d^2$.

$(\Rightarrow)$ Suppose $Z_\tan(f) \cap Z_\tan(g) = \emptyset$.
Thus, $b^\nu
> (ac)^\nu$ or  $d^\nu   > (ac)^\nu$; by symmetry, we may assume the first case, that $b^\nu
> (ac)^\nu$. Then
$$(a^2c)^\nu >  (bc)^\nu
> (ac^2)^\nu  > ( ad) ^\nu, $$ yielding $(ac)^\nu > d^\nu$
and thus
$$  (b^2)^\nu > (a^2c^2)^\nu > (a^2d)^\nu.$$
 Also $a^\nu \ge (\frac b a) ^\nu > c^\nu$ implies $$ (b ^2)^\nu > (abc)^\nu > (bc^2)^\nu;$$
 finally,
$$ (a^2 d)^\nu >  (bd)^\nu > (acd)^\nu >(d^2)^\nu,$$ yielding altogether  $|\res(f,g)| =
b^2$.

 $(\Leftarrow)$ Suppose that $|\res(f,g)|$ is tangible; then  $|\res(f,g)| =
 b^2$ or $|\res(f,g)| = d^2$. Assuming the former, we have $(b^2)^\nu  > (abc)^\nu
 $;
 i.e., $b^\nu  > (ac)^\nu $, implying $ Z_\tan(f) \cap Z_\tan(g) =
 \emptyset$.

 For intuition and future reference, we claim that
 the $\nu$-value of
 \eqref{twoquads} equals that of \begin{equation}\label{twoquads1}(a+c)\(a +\frac dc\)\(\frac ba + c\)
 \(\frac ba +\frac dc\).\end{equation}

 Indeed, by symmetry we may assume that $a^\nu \ge c^\nu.$ But \eqref{twoquads1} has the same $\nu$-value as
 $$
\begin{array}{lll}
 f(c)f(\frac dc) &= & (c^2+ a^\nu c +b)\((\frac dc)^2 + a^\nu\frac dc
 + b\) \\[1mm]
 &=& d^2 + a^\nu cd  +bc^2 + \frac{a^\nu d^2}c + (a^2)^\nu d +
 a^\nu cb + b(\frac dc)^2 + a^\nu b\frac dc + b^2,\end{array}$$
 which matches \eqref{twoquads} except for the extra terms $ \frac{a^\nu d^2}c , b(\frac dc)^2,$ and
 $a^\nu b\frac dc,$
which are dominated respectively by $d^2$, $bd$, and $a^\nu b c$;
  $bd$ is dominated in turn by $b^2 + d^2.$ \pSkip \end{example}

Although the formula for the \permanent\   is somewhat formidable,
and is quite intricate even for quadratic polynomials, it becomes
much simpler when
 the resultant is tangible, so our strategy is to reduce computations of the resultant to the tangible case as quickly as possible.

These examples indicate that the resultant is a ghost iff the
polynomials $f$ and $g$ have a common root.  The proof of this
result involves an inductive argument, which we prepare with some
notation. Given a polynomial $f = \sum _{i=0}^m \a_i \la ^i$, we
define
$$\spol{f}{\ell} = \sum _{i=\ell}^m \a_i \la ^{i-\ell}, \qquad \ell =
1,\dots, m;$$ thus, $f = \la \spol{f}{1} + \a_0 = \la^2
\spol{f}{2} + \a_1 \lm + \a_0 = \cdots.$ Recall from \cite[Lemma
7.28]{IzhakianRowen2007SuperTropical} that when $\a_1$ is
tangible, the polynomial $f = \sum _{i=0}^m \a_i \la ^i$ can be
factored as $(\la + \frac{\a_0}{\a_1})\spol{f}{1}$.

\begin{lemma}\label{prodres} If $f =  \sum _{i=0}^m \a_i \la ^i$ and $g =  \sum _{j=0}^n \bt_j \la ^j$ are full polynomials, then
$$|\res(f,g)| = \al_0 |\res(f,\spol{g}{1})| + \bt_0 |\res(\spol{f}{1},g)|.$$
\end{lemma}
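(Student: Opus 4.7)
The plan is to expand $|\res(f,g)|$ along its first column. This column is zero except for $\a_0$ at row $1$ (the top of the $A_n(f)$-block) and $\bt_0$ at row $n+1$ (the top of the $A_m(g)$-block). Since the supertropical determinant is a permanent (no signs), the Laplace-style expansion along this column gives
$$|\res(f,g)| \;=\; \a_0\,|M_{1,1}| \;+\; \bt_0\,|M_{n+1,1}|,$$
where $M_{i,1}$ denotes the $(m+n-1)\times(m+n-1)$ minor obtained by deleting row $i$ and column $1$.

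Next I would identify the two minors with the target resultants up to a small ``error''. A direct row-by-row inspection of $M_{1,1}$ shows that its top $n-1$ rows form $A_{n-1}(f)$, while its bottom $m$ rows form $A_m(\spol{g}{1})$ except for additional $\bt_0$-entries at positions $(n+k,k)$ for $k=1,\dots,m-1$ (these $\bt_0$'s come from rows $2,\dots,m$ of $A_m(g)$ and survive the deletion of column $1$). Writing $M_{1,1}=\res(f,\spol{g}{1})+E_1$ and invoking row-multilinearity of the permanent yields $|M_{1,1}| = |\res(f,\spol{g}{1})|+\gamma_1$, where $\gamma_1$ gathers the contributions that use at least one $\bt_0$ of $E_1$. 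A symmetric inspection gives $|M_{n+1,1}| = |\res(\spol{f}{1},g)| + \gamma_2$, with $\gamma_2$ collecting contributions using at least one of the extra $\a_0$-entries of a corresponding error matrix $E_2$ at positions $(k,k-1)$, $k=2,\dots,n$.

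Substituting reduces the lemma to showing that $\a_0\gamma_1+\bt_0\gamma_2$ is absorbed into $\a_0|\res(f,\spol{g}{1})|+\bt_0|\res(\spol{f}{1},g)|$ in the supertropical sum. Every monomial of $\a_0\gamma_1+\bt_0\gamma_2$ contains both $\a_0$ and $\bt_0$ as factors. The main obstacle is to show that each such ``cross'' monomial $\a_0\bt_0\prod\a_{i_s}\prod\bt_{j_t}$ is a ghost whose $\nu$-value is dominated by a monomial already on the right-hand side, so that by bipotence it is absorbed. The strategy is to pair each cross monomial with two uncrossed monomials --- one of shape $\a_0^2\bt_\ast^2\cdots$ in $\a_0|\res(f,\spol{g}{1})|$ and one of shape $\a_\ast^2\bt_0^2\cdots$ in $\bt_0|\res(\spol{f}{1},g)|$, obtained by re-routing the underlying permutation through the $f$- or $g$-block --- and to invoke the elementary AM-GM-type estimate $(xy)^\nu \le (x^2+y^2)^\nu$. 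The fullness (quasi-essential) hypothesis on $f$ and $g$, which forces the consecutive coefficient ratios to be monotone, is precisely what ensures that the re-routed permutations do contribute with the required dominant $\nu$-value on the right-hand side; with this in hand the absorption, and hence the asserted identity, follows.
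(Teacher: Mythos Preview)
Your setup matches the paper exactly: expand along the first column (only $\alpha_0$ in row~$1$ and $\beta_0$ in row~$n+1$), obtain the minors $M_{1,1}$ and $M_{n+1,1}$, and observe that each differs from the desired resultant only by the extra $\beta_0$-entries (resp.\ $\alpha_0$-entries) at the positions you list.

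Where you diverge is in how you dispose of those extra entries. Your plan is to collect the error terms $\alpha_0\gamma_1+\beta_0\gamma_2$, note that each contains the product $\alpha_0\beta_0$, and then absorb each such cross monomial by an AM--GM comparison against a pair of ``uncrossed'' monomials, one from each summand on the right. This is more work than necessary, and as written the existence of the paired monomials with the right combinatorics (the ``re-routing'') is not justified; you would have to exhibit, for every permutation that hits an extra $\beta_0$ in $M_{1,1}$, two explicit permutations (one valid in $\res(f,g_{[1]})$ and one in $\res(f_{[1]},g)$) whose product dominates its square. That bookkeeping is nontrivial.

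The paper's argument is simpler and never leaves the individual minor. In $M_{n+1,1}$, row~$1$ is $(\alpha_1,\alpha_2,\dots)$ while row~$2$ is $(\alpha_0,\alpha_1,\dots)$. Any permanent term using the $\alpha_0$ at position $(2,1)$ must pick some $\alpha_j$ from row~$1$ in column $j\ge 2$; swapping the column choices of rows $1$ and $2$ replaces the factor $\alpha_0\alpha_j$ by $\alpha_1\alpha_{j-1}$, which has $\nu$-value at least as large by fullness of $f$ (the log-concavity $\big(\tfrac{\alpha_{i-1}}{\alpha_i}\big)^\nu\le\big(\tfrac{\alpha_i}{\alpha_{i+1}}\big)^\nu$). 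Hence the $\alpha_0$ at $(2,1)$ may be erased without changing the permanent; the borderline case $(\alpha_0\alpha_j)^\nu=(\alpha_1\alpha_{j-1})^\nu$ forces $\alpha_1$ to be ghost, so the swapped term is ghost anyway and the extra term is still irrelevant. Iterating down the rows erases every stray $\alpha_0$, giving $|M_{n+1,1}|=|\res(f_{[1]},g)|$ directly. The symmetric swap with row~$n$ of $M_{1,1}$ (the shifted row $(\beta_1,\beta_2,\dots)$), using fullness of $g$, gives $|M_{1,1}|=|\res(f,g_{[1]})|$. No comparison across the two summands, and no AM--GM, is needed.
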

\begin{proof} We expand the resultant $$ |\res(f,g)| = \left|
\begin{array}{ccccccc}
             \al_0  & \al_1 & \al_2 & \dots &  &  &  \\
            & \al_0  & \al_1 & \al_2 & \dots &   &   \\
              & & \al_0  & \al_1 & \al_2 & \dots   &   \\
                &   &  &  \ddots  &\dots &\ddots &  \vdots \\
                     \bt_0 & \bt_1 & \bt_2 & \dots & \bt_n &  &  \\
                   &  \bt_0 & \bt_1 & \bt_2 & \dots & \bt_n &     \\
                    &   &  \bt_0 & \bt_1 & \bt_2 & \dots  & \vdots     \\
                       &&      &\ddots &\dots &
                  \end{array} \right|$$ along the first column, to get
\begin{equation}\label{sumdets} \al_0 \left|  \begin{array}{ccccccc}
            \al_0  & \al_1 & \al_2 & \dots &  &  & \\
            & \al_0  & \al_1 & \al_2 & \dots &   \\
              &  &   \ddots  &\dots &\ddots & \dots \\
                    \bt_1 & \bt_2 & \dots & \bt_n &  &  \\
                    \bt_0 & \bt_1 & \bt_2 & \dots & \bt_n  &     \\
                      & \ddots & \dots    &\ddots & & \ddots
                  \end{array} \right| + \bt_0 \left|  \begin{array}{cccccc}
            \al_1 & \al_2 & \dots &  &  &  \\
              \al_0  & \al_1 & \al_2 & \dots &    &   \\
              & \ddots &  \ddots  &\dots &\dots &  \dots\\
                     \bt_0 & \bt_1   & \dots & \bt_n &   & \\
                   &  \bt_0 & \bt_1 & \dots & \bt_n &    \\
                      &   &  \ddots  &\dots &\ddots &  \ddots
                  \end{array} \right|
                  \end{equation}
                  In computing the second \permanent\   of
~\eqref{sumdets} by expanding along the first column,  the
occurrence of $\a_0$ in the second row must be multiplied by some
$\a_j$ in the first row, whereas, switching the first two rows, we
also have $\a_1 \a_{j-1}$, which has greater $\nu$-value than
$\a_0 \a_j$ since $f$ is essential. (Strictly speaking, we must
also consider the possibility that $(\a_0 \a_j)^\nu =(\a_1
\a_{j-1})^\nu, $ but in this case $\a_1$ must be ghost, so again
the term with $\a_0 \a_j$ is not relevant to the computation of
the \permanent.) Thus the occurrence of $\a_0$ in the second row
cannot contribute to the second \permanent\ of
Equation~\eqref{sumdets}, and we may erase it. Likewise, each
occurrence of $\al_0$ does not contribute to the second
\permanent\ of Equation~\eqref{sumdets}.

 By the same token, each occurrence of $\bt_0$ does not contribute to  the first \permanent\   of
Equation~\eqref{sumdets}. Thus, \eqref{sumdets} equals
\begin{equation*}\label{sumdets1}
\begin{array}{l}
 \al_0 \left|  \begin{array}{cccccc}
         \al_0 &   \al_1  & \al_2 &  \dots &  &  \\
        &  \al_0    & \al_1  &  & \dots &   \\
               &     &\ddots &\dots & &\\
                    \bt_1 & \bt_2 & \dots & \bt_n &  &    \\
                      & \bt_1 & \bt_2 & \dots & \bt_n  &     \\
                      &      &\ddots &\dots & &
                  \end{array} \right| + \bt_0 \left|  \begin{array}{cccccc}
            \al_1 & \al_2 &\dots  &  \\
            & \al_1 & \al_2 & \dots    \\
               &     &\ddots &\dots  &\\
                    \bt_0 & \bt_1 & \dots & \bt_n &  &    \\
                      & \bt_0 & \bt_1 & \dots &      \\
                      &      &\ddots &\dots &
                  \end{array} \right|
 \\
 \\[2mm]
                   = \al_0 |\res(f,\spol{g}{1})| + \bt_0
                  |\res(\spol{f}{1},g)|.
                  \end{array}
                  \end{equation*}
\end{proof}

\begin{lemma}\label{resm2} If $f =  \sum _{i=0}^m \a_i \la ^i$ is
a   full polynomial and $g =  \lm^2 + \bt_1 \lm + \bt_0$  is
irreducible with $(\bt_1^2)^\nu  \geq \bt_0^\nu$,  then
$$|\res(f,g)| = \sum_{\ell = 0}^{m-1} \al_\ell \bt_0^\ell \spol{f}{\ell}(\bt_1) +
    \bt_0^m g(\al_{m-1}).$$
\end{lemma}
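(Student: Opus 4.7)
The plan is to derive the identity by iteratively applying \lemref{prodres} in the first argument, which expresses $|\res(f,g)|$ as a first-column cofactor-style expansion. A preliminary observation is that $\spol{g}{1} = \lm + \bt_1$ is a monic linear polynomial, so Example~\ref{specialcases2}(i) immediately yields $|\res(h,\spol{g}{1})| = h(\bt_1)$ for every $h \in R[\lm]$. In particular, $|\res(\spol{f}{\ell},\spol{g}{1})| = \spol{f}{\ell}(\bt_1)$ for each $\ell = 0, 1, \dots, m-1$.

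Next, starting from $|\res(f,g)| = |\res(\spol{f}{0},g)|$ and applying \lemref{prodres} to the pair $(\spol{f}{\ell},g)$---whose first polynomial has constant term $\al_\ell$---one obtains the recurrence
$$|\res(\spol{f}{\ell},g)| \ = \ \al_\ell\,\spol{f}{\ell}(\bt_1) \ + \ \bt_0\,|\res(\spol{f}{\ell+1},g)|.$$
Telescoping this over $\ell = 0, 1, \dots, m-1$ produces
$$|\res(f,g)| \ = \ \sum_{\ell=0}^{m-1} \al_\ell\,\bt_0^{\ell}\,\spol{f}{\ell}(\bt_1) \ + \ \bt_0^{m}\,|\res(\spol{f}{m},g)|.$$
Since $\spol{f}{m} = \al_m$ is a constant polynomial, the resultant convention (combined with the symmetry $|\res(f,g)| = |\res(g,f)|$) gives $|\res(\spol{f}{m},g)| = \al_m^{\deg g} = \al_m^{2}$.

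It remains to identify the tail $\bt_0^{m}\al_m^{2}$ with $\bt_0^{m} g(\al_{m-1})$ as elements of the supertropical semiring. Expanding $g(\al_{m-1}) = \al_{m-1}^{2} + \bt_1\al_{m-1} + \bt_0$, one must show that the three additional monomials $\bt_0^{m}\al_{m-1}^{2}$, $\bt_0^{m}\bt_1\al_{m-1}$, and $\bt_0^{m+1}$ either coincide with, or are $\nu$-dominated by, monomials already present in $\sum_{\ell=0}^{m-1}\al_\ell\bt_0^{\ell}\spol{f}{\ell}(\bt_1)$. This is the one step where the hypotheses do real work: fullness of $f$ supplies the monotonicity $(\hat{\al}_i/\hat{\al}_{i+1})^\nu \le (\hat{\al}_{i+1}/\hat{\al}_{i+2})^\nu$, which lets one compare products involving $\al_{m-1},\al_m$ with those coming from $\al_\ell$ for smaller $\ell$ appearing inside $\spol{f}{\ell}(\bt_1)$, while the assumption $(\bt_1^{2})^\nu \ge \bt_0^\nu$ permits trading factors of $\bt_0$ for factors of $\bt_1^{2}$ when needed. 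I expect this domination bookkeeping to be the main obstacle of the proof; the telescoping itself is a direct consequence of \lemref{prodres} together with the special-case computation from Example~\ref{specialcases2}(i).
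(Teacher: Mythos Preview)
Your iterative application of \lemref{prodres}, together with the identification $|\res(\spol{f}{\ell},\spol{g}{1})| = \spol{f}{\ell}(\bt_1)$ via Example~\ref{specialcases2}(i), is exactly the paper's approach. The only divergence is at the terminal step: you telescope all the way down to the constant $\spol{f}{m} = \al_m$ and are then left to reconcile $\bt_0^{m}\al_m^{2}$ with $\bt_0^{m} g(\al_{m-1})$ by a separate domination argument, which you rightly flag as the main obstacle. The paper instead halts the recursion one step earlier, at the \emph{linear} polynomial $\spol{f}{m-1} = \al_m\lm + \al_{m-1}$, and evaluates $|\res(\spol{f}{m-1},g)|$ directly as $g(\al_{m-1})$ using the symmetry $|\res(\spol{f}{m-1},g)| = |\res(g,\spol{f}{m-1})|$ together with Example~\ref{specialcases2}(i) (now applied with $g$ in the first slot and the monic linear $\spol{f}{m-1}$ in the second). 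This closes the recursion with no domination bookkeeping whatsoever; the hypotheses that $f$ is full and that $(\bt_1^{2})^\nu \ge \bt_0^\nu$ are used only to guarantee that \lemref{prodres} applies at each stage (the latter condition is precisely fullness of the quadratic $g$), not for any endgame comparison of monomials.
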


\begin{proof} By definition   $\spol{g}{1} = \lm + \bt_1$,
and thus $|\res(\spol{f}{\ell}, \spol{g}{1})| =
\spol{f}{\ell}(\bt_1)$ for each $\ell = 0 , \dots, m-1$; cf.~
Example~\ref{specialcases} and \ref{specialcases2}. Use Lemma
\ref{prodres} recursively to write
$$
\begin{array}{lll}
  |\res(f,g)| &  =  & \al_0|\res(f,\spol{g}{1})| + \bt_0 |\res(\spol{f}{1},g)|
  \\[1mm]
    & = & \al_0 f(\bt_1)  +  \al_1 \bt_0 |\res(\spol{f}{1},\spol{g}{1})| + \bt_0^2 |\res(\spol{f}{2},g)|
    \\[1mm]
    & = & \al_0 f(\bt_1)  +  \al_1 \bt_0 \spol{f}{1}(\bt_1) + \bt_0^2 |\res(\spol{f}{2},g)|
    \\[1mm]
    & = & \al_0 f(\bt_1)  +  \al_1 \bt_0 \spol{f}{1}(\bt_1) + \al_2 \bt_0^2
    |\res(\spol{f}{2},\spol{g}{1})|+
    \bt_0^3|\res(\spol{f}{3},g)|
    \\[1mm]
    & = & \dots \\[1mm]
    & = & \sum_{\ell = 0}^{m-1} \al_\ell \bt_0^\ell \spol{f}{\ell}(\bt_1) +
    \bt_0^m |\res(\spol{f}{m-1},g)| \\[1mm]
    & = & \sum_{\ell = 0}^{m-1} \al_\ell \bt_0^\ell \spol{f}{\ell}(\bt_1) +
    \bt_0^m g(\al_{m-1}).
\end{array}
 $$
\end{proof}

\begin{rem}\label{canonform} We quote \cite[Proposition
7.28]{IzhakianRowen2007SuperTropical}:
 Suppose $f = \sum _j \a_j \la ^j \in F [
\la]$ is full.
 If   $\a_i \lm^i$ is a tangible essential monomial of $f$, then
 \begin{equation}\label{tanbreak}
 f
=
 (\al_t\lm^{t-i} + \al_{t-1}\lm^{t-i-1} + \cdots +
\al_{i+1}\lm + \a_i)\left(\lm^i + \frac{\al_{i-1}}{\a_i}\lm^{i-1}
+ \cdots + \frac{\al_0}{\a_i}\right).\end{equation} In the
notation of this paper,
 \begin{equation}\label{tanbreak15}
 f
= \left(\lm^i + \frac{\al_{i-1}}{\a_i}\lm^{i-1} + \cdots +
\frac{\al_0}{\a_i}\right)\spol{f}{i}.\end{equation}
\end{rem}
 We are ready for a
formula for the resultant. It is convenient to start with the
tangible case, both because it is more straightforward and also it
helps in tackling the general case.

\begin{theorem}\label{mainthm} Suppose that $f= \sum _{i=0}^m \al_i \la^i$ and $g= \sum _{j=0}^n \bt_j \la^j$ are both full polynomials over a supertropical semifield $F$,
where the $\al_i, \bt_j \ne \fzero$.
\begin{enumerate} \eroman
    \item If all the $\a_i, \bt_j$ are tangible, and $a_i = \frac {\al_{i-1}}{\al_i}$
    and $b_i = \frac {\bt_{i-1}}{\bt_i}$, then \begin{equation}\label{taneq01}|\res(f,g)| = \a_m^n \bt_n^m
    \prod _{i=1}^{m} \prod _{j=1}^{n} (a _i +
b_j) = \a_m^n \bt_n^m \prod _{i,j} |\res(\la +a_i,\la
+b_j)|.\end{equation}
   \item In general, take tangible $a_i$ such that  $({\a_{i}} a_i)^\nu =
 (\a_{i-1}) ^\nu$ and $b_j$ such that $({\bt_{j}} b_j)^\nu =
{(\bt_{j-1})} ^\nu$, for $0 \le i < m,$ $0 \le j < n.$ Then
$$|\res(f,g)|^\nu = \a_m^n \bt_n^m \prod _{i=1}^{m} \prod _{j=1}^{n} (a _i +
b_j)^\nu.$$ \item Notation as in (ii) and Lemma~\ref{prodres}, if
$a_1^\nu > b_1^\nu,$ then
\begin{equation}\label{makeeq1}|\res(f,g)|=
\al_0|\res(f,\spol{g}{1})|.\end{equation}
 \item For any polynomials $f,g,$ and $h$,
\begin{equation}\label{taneq0}|\res(f,gh)|^\nu = |\res(f,g)|^\nu
|\res(f,h)|^\nu \quad \text{and } \quad |\res(fg,h)|^\nu =
|\res(f,h)|^\nu |\res(g,h)|^\nu .\end{equation}
\end{enumerate}
\end{theorem}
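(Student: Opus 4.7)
The plan is to prove the four parts in sequence, with (i) as the main technical step.

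For (i), I would induct on $m+n$ using Lemma~\ref{prodres}, which rewrites $|\res(f,g)| = \al_0|\res(f,\spol{g}{1})| + \bt_0|\res(\spol{f}{1},g)|$. The base case $m=n=1$ is contained in Example~\ref{specialcases2}(i), which also supplies the second equality in (i) via $|\res(\lm+a,\lm+b)| = a+b$. For the inductive step, $\spol{f}{1}$ is full of degree $m-1$ with leading coefficient $\al_m$ and ``roots'' $a_2,\dots,a_m$, and analogously $\spol{g}{1}$ has degree $n-1$, leading coefficient $\bt_n$, and roots $b_2,\dots,b_n$. Applying the inductive hypothesis to the two smaller resultants and substituting $\al_0 = \al_m\prod_{i=1}^m a_i$ and $\bt_0 = \bt_n\prod_{j=1}^n b_j$, the claim reduces to verifying the tropical Sylvester-style identity
\[
\prod_{i=1}^m a_i \cdot \prod_{i\ge 1,\,j\ge 2}(a_i+b_j) \ + \ \prod_{j=1}^n b_j \cdot \prod_{i\ge 2,\,j\ge 1}(a_i+b_j) \ = \ \prod_{i,j}(a_i+b_j),
\]
which can be established by examining the $\nu$-dominant configurations in the expansion $\prod_{i,j}(a_i+b_j) = \sum_S \prod_{(i,j)\in S} a_i \prod_{(i,j)\notin S} b_j$.

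For (ii), I would extend (i) to non-tangible $f,g$ via Remark~\ref{nuremove}: since $|\res(f,g)|^\nu = |\res(\hat f,\hat g)|^\nu$ and $\hat f,\hat g$ are tangible, part~(i) applies on the right. On taking $\nu$-values, and noting that the tangible ratios $\hat a_i$ share the same $\nu$-values as the elements $a_i$ specified in (ii) (and similarly for $b_j$), the formula in~(ii) follows.

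For (iii), I would combine Lemma~\ref{prodres} with the $\nu$-value formula (ii). The essentiality of $f$ and $g$ imposes the monotone orderings $a_1^\nu \le a_2^\nu \le \cdots$ and $b_1^\nu \le b_2^\nu \le \cdots$, so the hypothesis $a_1^\nu > b_1^\nu$ forces $(a_i+b_1)^\nu = a_i^\nu$ for every $i$. A direct comparison of the $\nu$-values of the two summands from Lemma~\ref{prodres}, using (ii) and the substitutions $\al_0 = \al_m \prod a_i$, $\bt_0 = \bt_n \prod b_j$, shows that $\al_0|\res(f,\spol{g}{1})|$ strictly $\nu$-dominates $\bt_0|\res(\spol{f}{1},g)|$; bipotence then gives the \emph{exact} equality $|\res(f,g)| = \al_0|\res(f,\spol{g}{1})|$.

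For (iv), apply (ii) to both sides. Writing $g = \sum\bt_j\lm^j$ and $h = \sum\gm_k\lm^k$, the leading coefficient of $gh$ is $\bt_n\gm_p$, and by the standard tropical Newton-polygon fact its sequence of ``roots'' at the $\nu$-level is the union-as-multiset of the roots of $g$ and of $h$. Substituting into (ii) and splitting the double product $\prod_i \prod_\ell (a_i+\text{root}_\ell)$ over the two groups of roots gives $|\res(f,gh)|^\nu = |\res(f,g)|^\nu |\res(f,h)|^\nu$; the second equation follows by symmetry. The main obstacle is the inductive step of (i): while the $\nu$-level form of the Sylvester-type identity is routine tropical combinatorics, promoting it to an exact equality in the supertropical semifield requires careful accounting for when multiple monomials attain the same dominant $\nu$-value and so contribute ghosts, both in the permanent expansion of the Sylvester matrix and on the product side of the identity.
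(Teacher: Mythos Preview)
Your overall architecture (induction on $m+n$ via Lemma~\ref{prodres} for (i), then deducing (ii)--(iv) by passing to $\hat f,\hat g$ through Remark~\ref{nuremove}) matches the paper's. The gap is exactly the point you flag yourself: the supertropical identity
\[
\prod_i a_i \prod_{i,\,j\ge 2}(a_i+b_j)\;+\;\prod_j b_j \prod_{i\ge 2,\,j}(a_i+b_j)\;=\;\prod_{i,j}(a_i+b_j)
\]
is not established, and getting the ghost bookkeeping right here is the entire content of (i). As written, your inductive step is incomplete.

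The paper's proof supplies precisely the missing idea, and it is simpler than a direct attack on the identity. Rather than applying induction to \emph{both} summands from Lemma~\ref{prodres} and then trying to recombine them, the paper does a trichotomy on $a_1^\nu$ versus $b_1^\nu$. If $a_1^\nu > b_1^\nu$, then $a_i^\nu > b_1^\nu$ for all $i$ (by fullness), so $f(b_1)=\al_0$ \emph{exactly}; hence the first summand is $\al_0|\res(f,\spol{g}{1})| = |\res(f,\la+b_1)|\,|\res(f,\spol{g}{1})|$, which by induction on $n$ already equals the product in~\eqref{taneq01}. One then checks (again by induction, now on $m$) that the second summand $\bt_0|\res(\spol{f}{1},g)|$ has strictly smaller $\nu$-value, so bipotence gives the exact equality---this case simultaneously yields (iii) for tangible $f,g$. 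The case $a_1^\nu < b_1^\nu$ is symmetric. When $a_1^\nu = b_1^\nu$, both summands are $\nu$-matched to the target product, so their sum is ghost; but then $a_1$ is a common root of $f$ and $g$, which forces the product side $\prod(a_i+b_j)$ to be ghost as well (via the factor $a_1+b_1$), and the $\nu$-values agree. This case split is what makes the ghost accounting automatic; you never have to analyze the full expansion of $\prod(a_i+b_j)$.

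Your arguments for (ii)--(iv) are fine and essentially coincide with the paper's (the paper phrases (iv) as ``replace all coefficients by tangibles and factor into linear pieces, then apply (i)'', which is your Newton-polygon statement).
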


\begin{proof}
(i) Noting that $ \spol{g}{1} = \sum_{j=1}^n \bt_j\la^{j-1}$ and
$b_1 = \frac{\bt_0}{\bt_1},$, we have  $$g =  \spol{g}{1}h$$ where
$h = \la + b_1$; in particular $\bt_0 = \bt_1 b_1$. Likewise, we
have  $f = (\la + a _1)\spol{f}{1}$. Also Lemma~\ref{prodres}
yields
\begin{equation}\label{makeeq}|\res(f,g)|= \al_0|\res(f,\spol{g}{1})|+
\beta_0 |\res(\spol{f}{1},g)|.\end{equation} By hypothesis that
$f$ is full, we have   $a_1^\nu \le a_2^\nu \le a_3^\nu  \le
\cdots$.

 Our strategy is to consider the remaining cases:
\begin{itemize}

    \item  $a_1^\nu \ne b_1^\nu$ in which case we want to show that one of the terms on the right
 side dominates the other, and equals $|\res(f,\spol{g}{1})| |\res(f,h)| $
 (and thus also equals $|\res(f,g)|$ by bipotence). \pSkip

    \item $a_1^\nu = b_1^\nu$, in which
case we want to show that both of the terms on the right side of
Equation~\eqref{makeeq} have $\nu$-value equal to
$|\res(f,\spol{g}{1})|^\nu |\res(f,h)|^\nu $, whereby
$|\res(f,g)|$ is ghost and equal to $|\res(f,\spol{g}{1})|
|\res(f,h)| $.\pSkip

\end{itemize}






  If $a_1^\nu > b_1^\nu$,  then $b_1^\nu < a_1^\nu \le a_i^\nu $,
  implying $(b_1^{i})^\nu < (a_i b_1^{i-1})^\nu   ,$ and thus
  $(\al_i b_1^i)^\nu < (\al_{i-1} b_1^{i-1})^\nu < \cdots \le (\al_1 b_1)^\nu < (\al_0)^\nu$. Thus, by bipotence,
   $\al_0 =
 f(b_1) = |\res(f,h)|$, so the first term of the right side of \eqref{makeeq} is
 $$\a_0|\res(f,\spol{g}{1} )| =  |\res(f,h)||\res(f,\spol{g}{1} )|,$$
 which equals the right side of  Equation \eqref{taneq01} by
 induction.
 Hence, to prove $|\res(f,g)| = |\res(f,\spol{g}{1})| |\res(f,h)|, $
  we need only show that $\bt_0 |\res(\spol{f}{1},g)|$
has $\nu$-value $< |\res(f,\spol{g}{1})|
  |\res(f,h)|$. (This also proves (iii) for tangible polynomials.)
 By induction on $m$,   $|\res(\spol{f}{1},g)| =
|\res(\spol{f}{1},\spol{g}{1})||\res(\spol{f}{1},h)|.$ By
Lemma~\ref{prodres}, $ \bt_1|\res(\spol{f}{1},\spol{g}{1})|$ has
$\nu$-value $\le |\res(\spol{f}{1},g)|^\nu.$ But
$$b_1|\res(\spol{f}{1},h)|^\nu = b_1 (\spol{f}{1}(b_1))^\nu <  f(b_1)^\nu =  |\res(f,h)|^\nu
,$$ so multiplying together (noting that $\beta _0 = \beta _1
b_1$), we see that
$$\bt_0
 |\res(\spol{f}{1},g)|^\nu = \bt_1|\res(\spol{f}{1},\spol{g}{1})|\, b_1|\res(\spol{f}{1},h)|^\nu < |\res(f,\spol{g}{1})|
 |\res(f,h)|^\nu,$$ as desired.

 We  want to conclude that
\begin{equation}\label{firstvers}|\res(f,g)| = \al_m^{n} \bt_n^m
\prod _{i=1}^{m} (a _i + b_1) \prod _{j=2}^{n} (a _i + b_j)
.\end{equation} Note that $|\res(f,h)| = f(b_1) = \a_m
 \prod (a_i +b_1)$, whereas, by induction,  $$|\res(f,\spol{g}{1})|  =
 \a_m^{n-1} \bt_n^m \prod _{i=1}^{m}  \prod
_{j=2}^{n} (a _i + b_j);$$  we  get \eqref{firstvers} by
multiplying these together.

If $a_1^\nu < b_1^\nu$, then $b_1 = h(a_1),$ and thus the second
term of the right side of \eqref{makeeq} is  $$\bt_0
|\res(\spol{f}{1},g)| = h(a_1)\bt_1|\res(\spol{f}{1},g)|,$$ and we
get \eqref{firstvers} by the same induction argument (applied now
to the left side).

Finally, if $a_1^\nu =b_1^\nu$, then the same argument shows that
the two terms on the right side of Equation~\eqref{makeeq} are
both $\nu$-matched to   the right side of  Equation
\eqref{taneq01}, implying that the right side of
Equation~\eqref{makeeq} is ghost, and it remains to show that the
left side is also ghost. But this is clear since the assumption
$a_1^\nu =b_1^\nu$ implies that $a_1$ is a common root of $f$ and
$g$. Thus, we have verified~\eqref{firstvers}, yielding (i); we
also have obtained (iii) for tangible polynomials.

(ii) and (iii) follow, since we can replace the $\a_i$ and $\bt_j$
by tangible coefficients of the same $\nu$-value.

(iv) follows for the same reason, since once we replace the
coefficients of $f,$ $g,$ and $h$ by tangible coefficients of the
same $\nu$-value, we may factor them further and apply (i).
\end{proof}

\begin{corollary}\label{prop:4b12}  Suppose $f = \prod _{i=1}^m (\la + a_i)$
and $g = \prod _{j=1}^n (\la + b_j)$ are tangible.
 Then
$$\dtr{{\res(f,g)} } = \prod _{i,j} ( a _i + b_j) =  \prod
_j f(b_j)=\prod_i g(a_i).$$ \end{corollary}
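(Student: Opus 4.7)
The plan is to invoke Theorem~\ref{mainthm}(i) essentially verbatim. First I would observe that since $f = \prod_i(\lm + a_i)$ and $g = \prod_j(\lm + b_j)$ are monic, the leading coefficients are $\a_m = \bt_n = \one$, so the prefactor $\a_m^n\bt_n^m$ appearing in that theorem is trivial. Next, order the roots so that $a_1^\nu \le \cdots \le a_m^\nu$ (and likewise for the $b_j$), and expand $f = \prod_i(\lm + a_i)$; using bipotence and supertropicality, one checks that the coefficient $\a_k$ of $\lm^k$ has $\nu$-value $(a_{k+1} \cdots a_m)^\nu$, the $\nu$-value of the largest product of $m-k$ of the roots. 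Consequently $\a_{i-1}/\a_i$ is $\nu$-matched to $a_i$, and likewise $\bt_{j-1}/\bt_j$ is $\nu$-matched to $b_j$, so the $a_i, b_j$ in the corollary's statement coincide (up to $\nu$-value) with those appearing in Theorem~\ref{mainthm}(i).

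If in addition the $a_i^\nu$ are pairwise distinct within the family (and likewise for the $b_j^\nu$), then all coefficients of $f$ and $g$ are tangible, and Theorem~\ref{mainthm}(i) applies directly to give
\begin{equation*}
|\res(f,g)| \;=\; \a_m^n\bt_n^m\prod_{i,j}\bigg(\frac{\a_{i-1}}{\a_i} + \frac{\bt_{j-1}}{\bt_j}\bigg) \;=\; \prod_{i,j}(a_i + b_j).
\end{equation*}
In the general case, where coincidences among the $a_i^\nu$ or $b_j^\nu$ produce some ghost coefficients in $f$ or $g$, I would iterate Theorem~\ref{mainthm}(iv) together with Example~\ref{specialcases2}(i) to obtain the $\nu$-equality $|\res(f,g)|^\nu = \prod_{i,j}|\res(\lm+a_i,\lm+b_j)|^\nu = \prod_{i,j}(a_i + b_j)^\nu$, and then upgrade to genuine equality either by a perturbation argument in the spirit of Remark~\ref{Zarden} (approach the given configuration by generic ones with all $\nu$-values distinct, apply the tangible-coefficient formula just established, and pass to the limit by continuity of the permanent and of the product), or by directly observing that both sides become ghost precisely under the same condition, namely that some $a_i + b_j$ is itself ghost.

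The two remaining equalities come for free from distributivity: substituting $\lm = b_j$ in $f = \prod_i(\lm + a_i)$ gives $f(b_j) = \prod_i(a_i + b_j)$, so $\prod_j f(b_j) = \prod_{i,j}(a_i + b_j)$; the identity $\prod_i g(a_i) = \prod_{i,j}(a_i + b_j)$ is obtained symmetrically. The only genuinely delicate step is the handling of ghost coefficients in the second paragraph, where Theorem~\ref{mainthm}(i) does not formally apply; this is the main obstacle, and it is resolved by the density/perturbation argument just sketched.
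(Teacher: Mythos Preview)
Your approach is correct and matches the paper's placement of the result as an immediate corollary of Theorem~\ref{mainthm}(i): since $f$ and $g$ are monic the prefactor $\a_m^n\bt_n^m$ disappears, and the $a_i,b_j$ in the statement coincide with the ratios $\a_{i-1}/\a_i,\ \bt_{j-1}/\bt_j$ appearing there; your care about the repeated-root case (where some coefficients become ghost and part~(i) does not literally apply) is appropriate, and your suggested fix via Theorem~\ref{mainthm}(ii),(iv) together with the observation that both sides are ghost exactly when some $a_i+b_j$ is, is sound.

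It is worth noting that the paper also supplies a \emph{second}, self-contained proof of this corollary by a genuinely different route, the ``generic method''. There one treats the $a_i$ as commuting indeterminates over~$F$, so that $|\res(f,g)|$ becomes a polynomial in $F[a_1,\dots,a_m]$. Specialising $a_i\mapsto b_j$ gives $f$ and $g$ a common root, whence $|\res(f,g)|$ becomes ghost; Lemma~\ref{rootd} then shows $(a_i+b_j)$ e-divides $|\res(f,g)|$ for every pair $(i,j)$. Since these linear forms are distinct, their product $\prod_{i,j}(a_i+b_j)$ e-divides $|\res(f,g)|$, and a degree count (both sides have degree $mn$ in the $a_i$) together with comparison of the $b_j$-free term forces the constant of proportionality to be~$\fone$. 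This argument avoids the inductive machinery of Theorem~\ref{mainthm} entirely and does not need a separate treatment of repeated roots, at the cost of invoking the e-divisibility lemma and working generically. Your route, by contrast, reuses the structural formula already established and is the more direct deduction once Theorem~\ref{mainthm} is in hand.
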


We turn to full polynomials over a supertropical semifield $F$
(with $\tT = \tT(F))$, recalling their decomposition
 from \cite[Section 7]{IzhakianRowen2007SuperTropical}.

\begin{defn} A full polynomial $f = \sum _{i=0}^t \al _i \la
^i$ is \textbf{semitangibly-full}  if  $\al_t$ and $\al _0$ are
tangible, but $ \al _i$ are ghost for all $0<i<t$; $f $ is
\textbf{left semitangibly-full}, (resp.~\textbf{right
semitangibly-full}) if $\al _0$ is tangible and $\al_i$ are ghost
for all $0<i \le t$ (resp. ~$\al _t$ is tangible and $\al_i$ are
ghost for all $0\le i < t$).
\end{defn}

\begin{rem} Suppose $f = \sum _{i=0}^t \al _i \la
^i$ is a full polynomial. Then,  taking $a_1 = \frac {\hat
\a_0}{\hat \a_1}\in \tT$  and $a_{t}= \frac {\hat \a_{t-1}}{\hat
\a_t}\in \tT,$ we have (in logarithmic notation)
\begin{equation}
Z_{\operatorname{tan}}(f) = \begin{cases} [a_1,a_t] \text{ for } f \text{ semitangibly-full};\\
[a_1,\infty) \text{ for } f \text{ left semitangibly-full};\\
(-\infty,a_t] \text{ for } f \text{ right semitangibly-full},
\end{cases}
\end{equation}
as seen by inspection. Indeed, the $\nu$-smallest tangible root of
$f$ is $a_1$ when $\a_0$ is tangible (since for any $a$ of
$\nu$-value $< a_1^\nu,$ one has $f(a) = \a_0$). Likewise, the
$\nu$-largest tangible root of $f$ is $a_t$ when $\a_t$ is
tangible.

Put in the terminology of Definition \ref{lefthalf}, if $f$ is
left semitangibly-full, then $f$ is $a_1^\nu$-left
 half-tangible; if   $f$ is
right semitangibly-full, then $f$ is $a_t^\nu$-right
 half-tangible.
\end{rem}

Equation \eqref{tanbreak} shows that we can always factor $f$ at
tangible essential monomials into factors with disjoint root sets,
leading immediately to the following assertion \cite[Proposition
7.36]{IzhakianRowen2007SuperTropical}:

\begin{prop}\label{canonform1} Any full polynomial $f$ can be decomposed as a product
\begin{equation}\label{tanbreak1} f = \fr f_1  \cdots
f_t \fl \end{equation} where the   polynomial $\fr$ is
semitangibly-full or left semitangibly-full, $f_1, \dots, f_t,$
are semitangibly-full,
 and $\fl$ is semitangibly-full or right semitangibly-full polynomial, and their tangible root
sets are mutually disjoint intervals with descending $\nu$-values.
\end{prop}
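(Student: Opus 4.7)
The plan is to apply the factorization in Equation~\eqref{tanbreak} iteratively at the tangible essential monomials of $f$. Writing $f = \sum_{i=0}^t \a_i \la^i$, let $I = \{i : \a_i \text{ is tangible and } \a_i \la^i \text{ is essential in } f\}$; since endpoint monomials are always essential, $I$ contains $0$ iff $\a_0$ is tangible and contains $t$ iff $\a_t$ is tangible. Enumerate $I = \{i_1 < i_2 < \cdots < i_s\}$, and let $J = I \cap (0, t)$ be the set of interior cut points. For each $j \in J$, Equation~\eqref{tanbreak} produces a factorization $f = h \cdot \ell$ where $h$ has degree $t-j$ with tangible constant term $\a_j$, and $\ell$ has degree $j$ with tangible leading coefficient $\one$. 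Iterating at each interior $j \in J$ (say, from the largest index down to the smallest), I obtain a product $f = b_0 b_1 \cdots b_r$, indexed so that $b_0$ covers the index range $[i_s, t]$, each intermediate $b_k$ covers $[i_{s-k}, i_{s-k+1}]$, and $b_r$ covers $[0, i_1]$.

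Next I verify the factor types. Each intermediate block $b_k$ (for $0 < k < r$) has both endpoint coefficients tangible, coming from the consecutive elements of $I$ at its boundary; hence $b_k$ is semitangibly-full. The top block $b_0$ has tangible constant term $\a_{i_s}$ and leading coefficient $\a_t$: if $t \in I$ it is semitangibly-full, and otherwise it is left semitangibly-full, matching the description of $\fr$. Symmetrically, $b_r$ has tangible leading coefficient $\one$ and constant term $\a_0 / \a_{i_1}$: semitangibly-full if $0 \in I$, and right semitangibly-full otherwise, matching $\fl$. Setting $\fr := b_0$, $\fl := b_r$, and $f_k := b_k$ for $0 < k < r$ yields the decomposition asserted in \eqref{tanbreak1}.

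Finally I check that the tangible root sets are disjoint intervals with descending $\nu$-values. By the preceding Remark, the tangible root set of each (left/right/fully) semitangibly-full factor is a single interval whose boundary $\nu$-values are the adjacent coefficient ratios $(\a_{i-1}/\a_i)^\nu$ at its two ends (with $\pm\infty$ replacing a one-sided endpoint). Because $f$ is full, the sequence of ratios $\bigl((\a_{i-1}/\a_i)^\nu\bigr)_{i=1}^t$ is nondecreasing in $i$; consequently, as we traverse the factors from $b_0$ down to $b_r$ the root intervals step from large $\nu$-values toward small $\nu$-values, and two consecutive intervals share at most the single boundary $\nu$-value at the cut point. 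This is exactly the descending, mutually disjoint ordering claimed.

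The main obstacle is reconciling the definition of ``semitangibly-full'' with the possibility that some interior coefficient of $b_k$, inherited directly from $f$, happens to be tangible. By the maximality of the cut-point set $I$, however, any such interior tangible coefficient is non-essential in $f$, and hence non-essential in $b_k$; it therefore produces no additional corner in the tangible $\tG$-graph $\grgTf{b_k}$, so the tangible root set of $b_k$ remains the single interval predicted above. Once this observation is in place, the rest of the argument is bookkeeping of indices.
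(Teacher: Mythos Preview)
Your approach---iterating the factorization of Remark~\ref{canonform} at the tangible essential monomials---is exactly the paper's own argument, which is stated in one line just before the proposition and attributed to \cite[Proposition~7.36]{IzhakianRowen2007SuperTropical}. Your bookkeeping of the index ranges, the identification of the end blocks as the left/right semitangibly-full pieces, and the monotonicity argument for the root intervals are all correct.

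The only point that needs sharpening is your final paragraph. You correctly observe that an interior coefficient of some block $b_k$ could be tangible (namely when the corresponding monomial of $f$ is quasi-essential rather than essential), and you argue that this does not affect the tangible root set of $b_k$. That is true, but it does not by itself make $b_k$ semitangibly-full in the sense of the definition, which requires \emph{all} interior coefficients to be ghost. The missing step is that a quasi-essential monomial can always have its coefficient replaced by the corresponding ghost without changing the e-equivalence class: at any point either the monomial is strictly dominated (so the change is invisible) or it ties another monomial (so the value is already ghost of the same $\nu$-value). Since the paper works throughout with polynomials up to e-equivalence (this is how ``full'' is understood), ghostifying the quasi-essential interior coefficients of each $b_k$ yields a genuinely semitangibly-full representative. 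With that one-line adjustment your argument is complete.
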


\begin{rem}\label{canonform2} Equation \eqref{tanbreak15} implies that in this
decomposition \eqref{tanbreak1}, $$\spol{f}{i} =  \fr f_1 \cdots
f_t {\spol{f}{i}}^\lstf$$ for each $i \le \deg (\fl).$\end{rem}

 Thus it makes sense for us to compute the resultant  of
 semitangibly-full polynomials.

\begin{lem}\label{mainthm0} Suppose that $f= \sum _{i=0}^m \al_i \la^i$, $g,$ and $h= \sum _{j=0}^n \bt_j
\la^j$ $(n \ge 1)$ are polynomials whose root sets are disjoint
intervals;   assume that all roots of $f$ and $g$ have $\nu$-value
greater than every tangible root of $h$.  Then
$$|\res(f,gh)| = \al_0^{n}\bt_n^m |\res(f,g)|,$$
which is tangible iff $|\res(f,g)|$ is tangible. Explicitly, for
each $i \le n,$
$$|\res(f,gh)| = \al_0^{i}|\res(f,\spol{(gh)}{i})|.$$
\end{lem}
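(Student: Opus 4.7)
The plan is to apply Theorem~\ref{mainthm}(iii) iteratively to the pairs $(f,\spol{(gh)}{i})$ for $i=0,1,\dots,n-1$ (with $\spol{(gh)}{0}:=gh$), each step peeling off a factor of $\al_0$. Recall that part~(iii) of Theorem~\ref{mainthm} gives $|\res(f,p)|=\al_0\,|\res(f,\spol{p}{1})|$ whenever the $\nu$-smallest root of $p$ has smaller $\nu$-value than the smallest root of~$f$.

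For the base case, the $\nu$-smallest root of $gh$ is a tangible root of $h$ (the roots of~$g$ being $\nu$-larger), and by hypothesis this root is strictly $\nu$-smaller than any root of~$f$; so Theorem~\ref{mainthm}(iii) yields $|\res(f,gh)|=\al_0\,|\res(f,\spol{(gh)}{1})|$. For the inductive step I need that for each $1\le i<n$, the $\nu$-smallest root of $\spol{(gh)}{i}$ is still smaller than that of~$f$. I prove this via the factorization~\eqref{tanbreak15}: a direct domination argument on the coefficients of $gh$ (using that any product of $s$ roots of $h$ is $\nu$-smaller than any product of $s$ roots of $g$) shows that the monomial of degree~$i$ in $gh$ is essential, dominated by $\bt_n$ times the constant term of $g$ times the product of the $n-i$ largest roots of~$h$. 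Hence~\eqref{tanbreak15} writes $gh$ as a degree-$i$ factor carrying the $i$ smallest roots of~$h$, times $\spol{(gh)}{i}$, whose tangible roots are then the remaining $n-i$ roots of~$h$ together with those of~$g$. For $i<n$ at least one $h$-root remains, and the hypothesis guarantees it is $\nu$-smaller than any root of~$f$. Iterating Theorem~\ref{mainthm}(iii) accordingly produces $|\res(f,gh)|=\al_0^{i}\,|\res(f,\spol{(gh)}{i})|$ for every $i\le n$, establishing the explicit formula.

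For the main product identity, take $i=n$. The coefficient of $\la^k$ in $\spol{(gh)}{n}$ is $(gh)_{n+k}=\sum_{j=0}^{n} h_j\,g_{n+k-j}$, and by the same separation argument the term $h_n g_k=\bt_n g_k$ strictly dominates all the others in $\nu$-value; by bipotence, $(gh)_{n+k}=\bt_n g_k$. Thus $\spol{(gh)}{n}=\bt_n g$ as polynomials, and scaling the last $m$ rows of $\res(f,g)$ by $\bt_n$ multiplies the permanent by $\bt_n^m$, so $|\res(f,\spol{(gh)}{n})|=\bt_n^m\,|\res(f,g)|$. Combined with the iteration this yields $|\res(f,gh)|=\al_0^n\bt_n^m\,|\res(f,g)|$. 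The tangibility assertion then follows because the hypothesis forces $\al_0$ and $\bt_n$ to be tangible, so multiplication by $\al_0^n\bt_n^m$ neither creates nor destroys ghosts.

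\emph{Main obstacle.} The delicate point is the inductive identification of $\spol{(gh)}{i}$: one must verify that stripping off the degree-$i$ tangible factor of $gh$ via~\eqref{tanbreak15} really removes exactly the $i$ smallest roots of $h$ (and not some mixture involving roots of $g$). This relies on the strict root-interval separation, which ensures that the dominant contribution to each low-degree coefficient of $gh$ comes from pairing the full product of $g$'s roots with a partial top portion of $h$'s roots.
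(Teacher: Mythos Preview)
Your proposal is correct and follows the same approach as the paper: iterate Theorem~\ref{mainthm}(iii) $n$ times, then identify $\spol{(gh)}{n}$ with $\bt_n g$; you supply the justifications (the domination argument on the coefficients of $gh$, the identity $\spol{(gh)}{n}=\bt_n g$, and why $\al_0,\bt_n$ must be tangible) that the paper's own terse proof leaves implicit. One small caveat: your appeal to~\eqref{tanbreak15} presumes the degree-$i$ coefficient of $gh$ is \emph{tangible}, which can fail when $h$ is semitangibly-full (as the paper explicitly allows in the remark following the lemma); however, your coefficient-domination computation already shows directly that the $\nu$-smallest root of $\spol{(gh)}{i}$ has the $\nu$-value of a root of $h$, which is all Theorem~\ref{mainthm}(iii) requires, so the iteration goes through without literally invoking~\eqref{tanbreak15}.
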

\begin{proof} If $n=1$ then the assertion is clear from
Example~\ref{specialcases2}(i), so we assume that $n>1.$ By
Theorem~\ref{mainthm}(iii),
$$|\res(f,gh)| = \al_0 |\res(f,\spol{(gh)}{1}).$$
But the same argument shows that $|\res(f,\spol{(gh)}{1})|= \al_0
|\res(f,\spol{(gh)}{2})|,$  and we have
$$|\res(f,gh)| = \al_0^2 |\res(f,\spol{(gh)}{2})|.$$
Iterating, after $i$ steps we get $$|\res(f,gh)| = \al_0^{i}
|\res(f,\spol{(gh)}{i})|;$$ taking  $i =n$ yields
$$|\res(f,gh)| = \al_0^{n} \bt_n^m
|\res(f,g)|,$$ as desired.
  \end{proof}

Note that in Lemma~\ref{mainthm0}, $f$ could be either left
semitangibly-full or  semitangibly-full, and $h$ could be either
semitangibly-full or right semitangibly-full, but in every case
the result is the same.

\begin{theorem}\label{mainthm2} Suppose that $f= \sum _{i=0}^m \al_i \la^i$ and $g= \sum _{j=0}^n \bt_j \la^j$ are both full polynomials over a supertropical semifield $F$,
where the $\al_i, \bt_j \ne \fzero$.
\begin{enumerate} \eroman
    \item Suppose $f$
and $g$ are full polynomials, decomposed as products  as in
Proposition~\ref{canonform1}; i.e.,  $$f = \fr f_1  \cdots f_t
\fl, \qquad g = \grt g_1  \cdots g_u \gl,$$ and suppose the
tangible root sets of $f$ and $g$ are disjoint. Let $f_0 = \fr$,
$g_0 = \grt,$ $f_{t+1} = \fl$, and $g_{u+1} = \gl.$ Then
$$|\res(f,g)| = \prod _{j=0}^{t+1}\prod _{k=0}^{u+1} |\res(f_j,g_k)|, $$ each of which can
be calculated according to Lemma~\ref{mainthm0}.

\item  Specifically, if $|\res(f,g)|$ is tangible and $g = \prod_j
(\la + b_j),$ then $|\res(f,g)| = \prod _j f(b_j\textbf{}).$
\medskip
\item  $|\res(f,g)|$ is ghost iff $f$ and $g$ have a common root.
\end{enumerate}
\end{theorem}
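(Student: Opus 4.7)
The plan is to prove the three parts in the order (i), (iii), (ii), since (iii) rests on (i) and (ii) rests on (iii). For part (i), I would induct on the total number of factors $N = (t+2)+(u+2)$ in the decompositions of $f$ and $g$ provided by Proposition~\ref{canonform1}. Because $f$ and $g$ have disjoint tangible root sets and each individual decomposition already has disjoint intervals with descending $\nu$-values, the tangible root intervals of all the factors $f_j$ and $g_k$ are pairwise disjoint and can be linearly ordered by $\nu$-value. The base case $N=2$ is immediate. For the inductive step, pick the factor $h$ whose tangible root interval has the smallest $\nu$-values overall; after using the symmetry $|\res(f,g)|=|\res(g,f)|$ if necessary, place $h$ as the rightmost factor of its polynomial, say $g = \tilde g\, h$, and apply Lemma~\ref{mainthm0} to obtain
$$|\res(f,g)| = |\res(f,h)|\cdot|\res(f,\tilde g)|.$$
The inductive hypothesis handles $|\res(f,\tilde g)|$, yielding $\prod_j\prod_{k\ne k_0}|\res(f_j,g_k)|$ where $h = g_{k_0}$. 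For $|\res(f,h)|$, Theorem~\ref{mainthm}(iv) gives the $\nu$-value multiplicativity $|\res(f,h)|^\nu = \prod_j|\res(f_j,h)|^\nu$; to upgrade this to exact equality, note that because $h$ has the smallest tangible roots overall, neither $f$ nor any $f_j$ can have ghost constant term (else a right semitangibly-full factor would produce tangible roots of arbitrarily small $\nu$-value, contradicting disjointness from $h$), while the leading coefficient of $h$ is tangible by its semitangibly-full or right semitangibly-full description in Proposition~\ref{canonform1}. Thus Lemma~\ref{mainthm0} delivers each relevant resultant as a product of tangible coefficients, so $|\res(f,h)|$ and each $|\res(f_j,h)|$ are tangible, and $\nu$-equality between tangibles is strict equality.

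For part (iii), the $(\Leftarrow)$ direction combines Theorem~\ref{thm:rootPrime} (a common tangible root forces $f,g$ to be non-relatively-prime) with Remark~\ref{easydir} (non-coprimeness forces $|\res(f,g)|$ to be ghost). For the $(\Rightarrow)$ direction I prove the contrapositive: if $f$ and $g$ have disjoint tangible root sets, part (i) gives $|\res(f,g)| = \prod_{j,k}|\res(f_j, g_k)|$ with each factor tangible by the tangibility analysis of (i); since $\tT$ is closed under multiplication, the whole product is tangible.

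For part (ii), tangibility of $|\res(f,g)|$ combined with (iii) forces $f$ and $g$ to share no common tangible root. Since $g = \prod_j(\la+b_j)$ is already its decomposition into tangible semitangibly-full linear factors, part (i) yields
$$|\res(f,g)| = \prod_j|\res(f,\la+b_j)| = \prod_j f(b_j),$$
the last equality being the computation recorded in Example~\ref{specialcases2}(i). The main obstacle throughout is the tangibility bookkeeping in (i): upgrading the $\nu$-value multiplicativity of Theorem~\ref{mainthm}(iv) to exact equality for the single-factor resultant $|\res(f,h)|$ requires verifying tangibility of every intermediate $|\res(f_j,h)|$, and the decisive observation that makes this verification go through is that the smallest-root factor $h$ prevents the opposing polynomial from carrying a ghost constant term.
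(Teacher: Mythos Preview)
Your proposal is correct and follows essentially the same inductive strategy as the paper: peel off the factor with the smallest tangible root interval via Lemma~\ref{mainthm0} and recurse on the remaining product. The paper's proof is terser---it writes $|\res(f,g)| = |\res(f,\gl)|\,|\res(f,\grt g_1\cdots g_u)|$ and says ``one continues by induction on $t+u$,'' leaving the further splitting $|\res(f,\gl)|=\prod_j|\res(f_j,\gl)|$ to the same recursive mechanism---whereas you handle that sub-step separately via Theorem~\ref{mainthm}(iv) plus a tangibility argument; your explicit ordering (i), (iii), (ii) and invocation of Theorem~\ref{thm:rootPrime} for the $(\Leftarrow)$ half of (iii) are also a bit more careful than the paper's ``follows from (i)'' for part (ii).

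One small caveat: the clause ``$\nu$-equality between tangibles is strict equality'' tacitly assumes $\nu_\tT$ is injective, which the paper does not impose. You can bypass this entirely: Lemma~\ref{mainthm0} applied with trivial middle factor gives the closed forms $|\res(f,h)|=\al_0^{\deg h}\,\bt_{\deg h}^{\,m}$ and $|\res(f_j,h)|=(\al_0^{(j)})^{\deg h}\,\bt_{\deg h}^{\,m_j}$, and since $\prod_j \al_0^{(j)}=\al_0$ and $\sum_j m_j=m$, the product over $j$ is \emph{literally} $|\res(f,h)|$, so Theorem~\ref{mainthm}(iv) is not needed at this step.
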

\begin{proof} $ $
\begin{enumerate} \eroman
    \item We may assume that  every tangible root of $\gl$ has
$\nu$-value less than every tangible root of $f$ as well as every
tangible root of $\grt g_1 \cdots g_u$. Let $n_u = \deg (\gl).$
Remark~\ref{canonform2} applied to Lemma~\ref{mainthm0} implies
$$
\begin{array}{lllllll}
  |\res(f,g)| & = &  \al_0^{i}|\res(f,\spol{g}{n_u})| & = &  \al_0^{i}|\res(f,
\grt g_1  \cdots g_u  \spol{\gl}{n_u})| \\[1mm]
  & &  & = & \al_0^{n_u}\bt_n^m |\res(f, \grt g_1  \cdots g_u | \\[1mm] & & &  =
  &
|\res(f,\gl)|\res(f, \grt g_1  \cdots g_u )|, \\
\end{array}
$$ and one continues by induction on $t+u$.
 \pSkip
    \item Follows from (i). \pSkip

    \item  Follows from Remark \ref{easydir}  and the contra positive of
(i). If $|\res(f,g)|$ is ghost, and the root sets are disjoint,
some $|\res(f_j,g_k)|$ must be ghost, contradicting
Lemma~\ref{mainthm0}.
\end{enumerate}
\end{proof}

Putting together Theorems \ref{thm:rootPrime} and \ref{mainthm2}
yield our main result:
\begin{theorem}\label{thm:resOfTan}
Polynomials $f = \sum \al _i \la ^i$ and $ \ g= \sum \bt_j \la ^j$
satisfy $\dtr{ \res (f,g) } \in \tGz$ iff $f$ and $g$ are not
relatively prime, iff $f$ and $g$ have a common tangible root.
\end{theorem}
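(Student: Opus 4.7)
The plan is to obtain this three-way equivalence by assembling results already in hand. The author essentially telegraphs the strategy: combine \thmref{thm:rootPrime}, which ties ``not relatively prime'' to ``common tangible root,'' with \thmref{mainthm2}(iii), which ties ``common tangible root'' to ``$|\res(f,g)|$ is ghost.'' So structurally the proof is a short assembly argument rather than a new computation.

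More concretely, I would establish the three implications of the cycle in the following order. First, the implication ``$f,g$ not relatively prime $\Rightarrow |\res(f,g)| \in \tGz$'' is exactly \remref{easydir}: given tangible polynomials $\tp,\tq$ of respective degrees less than $n,m$ with matching top and bottom degrees and $\tp f + \tq g$ ghost, one applies the matrix identity \eqref{eq:11} of \remref{rmk:1} to see that the row vector of coefficients of $\tp$ and $\tq$ multiplies $\res(f,g)$ into a ghost vector, forcing $|\res(f,g)|\in \tGz$ by the standard argument (each of its contributing terms in the permanent expansion must be ghost). Second, ``$|\res(f,g)| \in \tGz \Rightarrow f,g$ have a common tangible root'' is precisely the content of \thmref{mainthm2}(iii): if the tangible root sets were disjoint, one would decompose $f$ and $g$ via \propref{canonform1} as products of semitangibly-full pieces whose root intervals are mutually disjoint, and then \lemref{mainthm0} would express $|\res(f,g)|$ as the tangible product of nonzero factors $\a_0^{n_u}\bt_n^m|\res(f_j,g_k)|$, each tangible---a contradiction. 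Third, ``common tangible root $\Rightarrow$ not relatively prime'' is the $(\Leftarrow)$ direction of \thmref{thm:rootPrime}, proved there by reducing via Theorem \ref{fullfact2} to the case of irreducible factors of degree at most $2$ and exhibiting explicit tangible cofactors $\tp, \tq$.

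One minor point to flag: \thmref{thm:rootPrime} is stated over a \emph{connected} supertropical semifield, so the final theorem should carry that same hypothesis (or be applied after passing to the connected setting); similarly the $\ldeg$--$\deg$ matching conditions in the definition of relatively prime are harmless because of the reductions in \remref{relpr1}, which allow us to assume $f,g$ monic with nontrivial constant term before invoking the cycle. I do not see a real obstacle here: all the heavy lifting was done in the preceding sections---the delicate graph-theoretic argument for \thmref{thm:rootPrime} and the inductive resultant formula of \thmref{mainthm} feeding into \thmref{mainthm2}---so this final theorem is a clean corollary. The main care required is simply to ensure the hypotheses line up when stringing the implications together.
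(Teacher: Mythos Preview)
Your proposal is correct and matches the paper's own approach exactly: the paper simply states that putting together \thmref{thm:rootPrime} and \thmref{mainthm2} yields the result, and your cycle of implications (invoking \remref{easydir}, \thmref{mainthm2}(iii), and the $(\Leftarrow)$ direction of \thmref{thm:rootPrime}) is precisely this assembly spelled out. Your caveat about the connectedness hypothesis from \thmref{thm:rootPrime} is well taken; the only quibble is that your parenthetical justification for \remref{easydir} (``each of its contributing terms in the permanent expansion must be ghost'') is not quite the right mechanism---the actual reason is that a tangible vector mapping to a ghost vector under $\res(f,g)$ forces the supertropical determinant to be ghost, a fact from the companion matrix paper---but since you are only citing \remref{easydir} this does not affect the validity of your argument.
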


The next example illustrates the assertion of Theorem
\ref{thm:resOfTan}.

\begin{example} Let $$f = (\lm+a)(\lm+b) = \lm^2 + b\lm + ab  \qquad \text{ and } \qquad  g= \lm +
c,$$ where $a,b, c \in R$ and $b^\nu > a^\nu$.  Then
\begin{equation}\label{exam1} \res(f,g) = \dtr{
\begin{array}{ccc}
 ab &  b &  \one \\
c &   \one &  \\
& c    & \one \\
\end{array}%
} = c^2  + b c + a b. \end{equation}
\begin{enumerate}
    \item If   $f$ and $g$ have a common tangible root, that is  $a$
    or $b$ respectively, then
$c^\nu  =  a^\nu$ (resp.~ $ c^\nu  = \ b^\nu$),
    and   clearly  $\res(f,g) = (ab)^\nu$ (resp.~    $\res(f,g) = (b^2)^\nu$) is
    ghost. \pSkip

    \item When  $c ^\nu \neq a ^\nu , b ^\nu $, and thus $f$ and $g$ have no common factor,
    and $\res(f,g) \in \tGz,
    $ then  at least one term in \eqref{exam1} is ghost. As usual, we
    take tangible
    $\hat a, \hat b, \hat c$  such that  $(\hat a)^\nu =
    a^\nu, $  $(\hat b)^\nu =
    b^\nu, $ and  $(\hat c)^\nu =
    c^\nu. $

    Assume first that $\res(f,g) =
    (ab)^\nu$ and $a$ or $b$ is ghost. Then $ c ^\nu \leq
    a ^\nu$ and thus $\hat c$ is also a root of $f$.

If $\res(f,g) =
    (bc)^\nu$, then  $a^\nu < c^\nu < b^\nu.$ If $b$ is ghost, then   $\hat c$ is a
common root of $f$ and $g$. But if $b$ is tangible, then $c$ is
ghost and $\hat a$ is a common root of $f$ and $g$.

Finally, if $\res(f,g) = c^2$, where $c$ is ghost, then  $ a ^\nu,
b^\nu \le c^\nu $, so $\hat a$ and $\hat b$ are common tangible
roots of $f$ and $g$.
\end{enumerate}

\end{example}

\subsection{A second proof of Corollary \ref{prop:4b12} using the generic method}

Since Corollary \ref{prop:4b12}  encapsulates the basic property
of the resultant, let us present a second proof using a different
approach of independent interest. We start with a multivariate
version of \cite[Lemma~7.6]{IzhakianRowen2007SuperTropical}.

\begin{lemma}\label{rootd}
Suppose $f \in F[\la_1, \dots, \la_n]$, and let $f^a( \la_1,
\dots, \la_{k-1}, \la_{k+1}, \dots, \la _n)$ denote the
specialization of $f$ under $\la_k \mapsto a\in \tT.$ Suppose that
the polynomial $f^a$ becomes ghost on a nonempty open interval
$\oset_a$ of $F^{(n-1)}$  and also assume that every tangible open
interval $W_\tT$ of $F^{(n)}$ contains some point $\bfb$ that
$f(\bfb) \notin \tGz.$ Then $(\la_k + a)$ e-divides~$f$.
\end{lemma}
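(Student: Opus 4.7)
My plan is to reduce to the single-variable version \cite[Lemma~7.6]{IzhakianRowen2007SuperTropical} by fiber-wise specialization in $\la_k$, and then assemble the fiber-wise quotients into a single polynomial $h \in F[\la_1, \dots, \la_n]$.

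First, I would write $f = \sum_{i=0}^d f_i \la_k^i$ with coefficients $f_i$ in $F[\la_1, \dots, \widehat{\la_k}, \dots, \la_n]$, viewing $f$ as a polynomial in $\la_k$ alone. For each tangible $\bfb \in \oset_a$, the specialization $\tilde f_\bfb(\la_k) := f(b_1, \dots, b_{k-1}, \la_k, b_{k+1}, \dots, b_n) = \sum_i f_i(\bfb)\,\la_k^i$ is a single-variable polynomial over $F$ satisfying $\tilde f_\bfb(a) = f^a(\bfb) \in \tGz$ by the first hypothesis. The second (tangibility) hypothesis forces $\tilde f_\bfb$ to be a non-ghost polynomial for $\bfb$ in some dense open subset $\oset'' \subseteq \oset_a \cap \tT^{(n-1)}$: if instead $\tilde f_\bfb$ were ghost for every $\bfb$ in a sub-interval $\oset' \subseteq \oset_a$, then $f$ would be ghost on the open set $\oset' \times F \subseteq F^{(n)}$, contradicting the assumption.

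Applying the single-variable lemma at each such $\bfb$ yields a factorization $\tilde f_\bfb \eqR (\la_k + a)\, \tilde h_\bfb$ with $\tilde h_\bfb \in F[\la_k]$. Using the canonical factorization of full single-variable polynomials (Theorem~\ref{fullfact2}), the coefficients of $\tilde h_\bfb$ can be read off by deleting one copy of the tangible linear factor $(\la_k + a)$ from the factorization of $\tilde f_\bfb$, leaving a polynomial whose coefficients are supertropical-polynomial expressions in $f_0(\bfb), \dots, f_d(\bfb)$. This produces a candidate polynomial $h \in F[\la_1, \dots, \la_n]$ with $h(\bfb, \la_k) \eqR \tilde h_\bfb$ for $\bfb \in \oset''$; it remains to verify $(\la_k + a)\, h \eqR f$ as functions on all of $F^{(n)}$. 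The fiber-wise identity gives agreement on $\oset'' \times F$, and I would carry out the extension by comparing essential monomials: each essential monomial of either $f$ or $(\la_k + a)\, h$ dominates on some nonempty open region, and continuity combined with the density of the fibers over $\oset''$ forces the essential monomials to coincide.

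The main obstacle is exactly this last extension step. In the supertropical setting, polynomial agreement on an open set does \emph{not} in general imply e-equivalence, since essential monomials may dominate only outside the given open region. Handling this requires careful bookkeeping of essential monomials of both polynomials and their regions of dominance, using continuity to propagate agreement. A secondary subtlety, which is the reason the construction of $h$ must proceed through the fiber-wise root structure rather than by formal long division, is that the naive pseudo-quotient $h_j := \sum_{i>j} a^{i-j-1} f_i$ does not yield an e-correct quotient in general: the supertropical identity $x+x = x^\nu$ introduces spurious ghost coefficients whenever two terms in the long division happen to tie, producing an $h$ with the wrong essential monomials.
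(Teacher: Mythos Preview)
Your approach has a genuine gap, precisely at the two places you flag yourself: assembling the fiber-wise quotients $\tilde h_\bfb$ into a single polynomial $h$, and then extending the e-equivalence $(\la_k+a)h \eqR f$ from the open set $\oset''\times F$ to all of $F^{(n)}$. Neither step is handled, and as you correctly note, agreement of two polynomials on an open set does \emph{not} force e-equivalence in the supertropical setting. Your suggestion to compare essential monomials and use continuity is plausible-sounding but not an argument; an essential monomial of $(\la_k+a)h$ could dominate only on a region disjoint from $\oset''\times F$, and nothing you have written rules that out. The construction of $h$ itself is also underspecified: the coefficients of the quotient in Theorem~\ref{fullfact2} are not, in any obvious way, polynomial expressions in $f_0(\bfb),\dots,f_d(\bfb)$ that would globalize to elements of $F[\la_1,\dots,\widehat{\la_k},\dots,\la_n]$.

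The paper avoids all of this by a different induction. Instead of expanding $f$ in the variable $\la_k$ being specialized, one expands in a \emph{different} variable, say $\la_1$ (taking $k=n$ by symmetry): write $f=\sum_i f_i\la_1^i$ with $f_i\in F[\la_2,\dots,\la_n]$, so that $f^a=\sum_i f_i^a\la_1^i$. Since the summands $f_i^a\la_1^i$ involve distinct powers of $\la_1$, they are $\nu$-distinct on some open subset of $\oset_a$, and Lemma~\ref{helpgh} then forces each $f_i^a$ to be ghost on a nonempty open interval of $F^{(n-2)}$. By induction on $n$, $(\la_n+a)$ e-divides each $f_i$, say $f_i \eqR (\la_n+a)g_i$, and then trivially $f \eqR (\la_n+a)\sum_i g_i\la_1^i$. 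The base case $n=1$ is exactly \cite[Lemma~7.6]{IzhakianRowen2007SuperTropical}. The point is that the assembly step becomes a formal one-liner once the divisibility is established coefficient-by-coefficient in an auxiliary variable, and no extension argument from an open set is ever needed.
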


\begin{proof} By symmetry of notation, one may assume $k=n.$

 The case $n =1$ is just \cite[Lemma~7.6]{IzhakianRowen2007SuperTropical}, since the
assertion is that $f  \in  F[\la_1] $ becomes a constant that is a
ghost on an open interval, and thus is a ghost; i.e., $a$ is a
root of $f$ which by assumption is ordinary. Thus, we may assume
$n>1.$ Write $f = \sum_i f_i \la_1^i,$ for $f_i \in F[\la_2,
\dots, \la_{n}].$   In particular, $f^a = \sum_i f_i^a \la_1^i.$
But the $f_i^a \la_1^i$ are $\nu$-distinct on some open interval
since they involve different powers of $\la_1$. Hence, by Lemma
~\ref{helpgh}, each $f_i^a \la_1^i$ (and thus $f_i$) is ghost on
some nonempty open interval, so by induction $(\la_n+a)$ e-divides
each $f_i$, and thus also $e$-divides $f$.
\end{proof}

\begin{proof}[Second proof of Corollary~\ref{prop:4b12}:] Using the
generic method, we consider the case where all the $a_i$ are
(tangible) indeterminates over the supertropical semifield $F$;
then $\dtr {\res(f,g)} $ is some polynomial in $F[a_1, \dots,
a_m].$ But, substituting $a_i \mapsto b _j$ yields a common root
for $f$ and $g$, and thus sends $\dtr{ {\res(f,g)}} $ to $\tGz$,
in view of Theorem \ref{thm:resOfTan}. Hence, by Lemma
~\ref{rootd}, $( a_i +b _j)$ e-divides $\dtr {\res(f,g)} $ for
each $i,j,$ and these are all distinct, implying by an easy
induction argument that $\prod _{i, j}(a_i +b _j)$ e-divides $\dtr
{\res(f,g)}$.

Clearly, $\prod_{i, j}(  a _i + b_j )$ has degree $mn$. So let us
compute the degree of $\dtr {\res(f,g)}$. For $i\le n,$ the
$(i,j)$ term in $\res(f,g)$ (when nonzero) has degree $m+j-i.$ For
$i> n,$ the $(i,j)$ term in $\res(f,g)$ (when nonzero) has
 degree 0. Thus it follows from the
formula for calculating the \permanent\   that $|\res(f,g)|$ has
degree $mn + \sum j - \sum i = mn.$
 One concludes $\dtr {\res(f,g)} = c \prod_{i,j}( a _i + b_j   )$ for some $c \in F.$ But the term in
$\prod _{i,j} ( a _i + b_j )$ without any $b_j$ is precisely
$(a_1\dots a_m)^n,$ which occurred by itself in $\dtr
{\res(f,g)}$. Thus $c=\fone,$ proving the first assertion. The
other equalities follow at once. For example, $\prod _i (b _j + a
_i ) = f(b_j),$ so
$$\prod _{i,j} ( a _i + b_j) = \prod _j f(b_j).$$
\end{proof}

\section{B\'{e}zout's theorems}

One of the major applications of the resultant to geometry is
B\'{e}zout's theorem. Throughout this section we assume that $F$
is a supertropical semifield. Suppose $f,g$ in $F[\lm_1,\lm_2]$.
Rewriting  the polynomials in terms of $\tilde \lm = \lm_1/\lm_2$
and $\lm = \lm_2$, the polynomials $f$ and $g$ can be viewed as
polynomials in $\lm$, with coefficients in $F[\tilde \lm]$. From
this point of view, the resultant $\abs{\res(f,g)}$ is a
polynomial $p(\tilde \la).$

\begin{theorem}[\textbf{B\'{e}zout's theorem}]\label{thm:Bezout}
Nonconstant polynomials $f,g$ in $F[\lm_1,\lm_2]$ cannot have more
than $mn$ 2-ordinary  points in the intersection of their sets of
projective roots, where $m = \deg(f)$ and $n = \deg(g)$.
\end{theorem}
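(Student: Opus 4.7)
The plan is to set $p(\tilde\lm) := |\res(f,g)|$, view it as a one-variable polynomial in $F[\tilde\lm]$, show $\deg_{\tilde\lm} p \le mn$, and then produce an injection from the set of 2-ordinary common projective roots of $f,g$ into the set of ordinary tangible roots of $p$. Since a one-variable polynomial of degree $d$ has at most $d$ ordinary tangible roots (each contributes a positive integer jump in the slope of $\grgTf{p}$, and the total slope is at most $d$), the desired bound of $mn$ will follow.

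First, I would write $f = \sum_{k=0}^m \a_k(\tilde\lm)\lm^k$ and $g = \sum_{k=0}^n \bt_k(\tilde\lm)\lm^k$ in $(F[\tilde\lm])[\lm]$. Since the change of variables sends each monomial $\lm_1^a\lm_2^b$ (with $a+b\le m$) to $\tilde\lm^a\lm^{a+b}$, we have $\deg_{\tilde\lm}\a_k \le k$ for each $k\le m$ and similarly $\deg_{\tilde\lm}\bt_k \le k$. In the $(m+n)\times(m+n)$ matrix $\res(f,g)$, the entry at position $(r,c)$ in the top $n$ rows is $\a_{c-r}$ (or $\rzero$), and at $(n+s,c)$ in the bottom $m$ rows is $\bt_{c-s}$ (or $\rzero$). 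For any $\sigma\in S_{m+n}$, the corresponding term in the permanent therefore has $\tilde\lm$-degree at most
\[
\sum_{r=1}^n(\sigma(r)-r)+\sum_{s=1}^m(\sigma(n+s)-s) = \frac{(m+n)(m+n+1)}{2}-\frac{n(n+1)}{2}-\frac{m(m+1)}{2} = mn,
\]
so $\deg_{\tilde\lm} p \le mn$.

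Next, for each common projective root, which after a generic linear change of coordinates we may take in the form $[a_1:a_2]$ with $a_2 \ne \fzero$, I set $\tilde a = a_1/a_2$. Specializing $\tilde\lm \mapsto \tilde a$, the univariate polynomials $f(\tilde a\lm,\lm)$ and $g(\tilde a\lm,\lm)$ in $F[\lm]$ share the tangible root $\lm = a_2$, so by \thmref{thm:resOfTan} the specialization $p(\tilde a)$ is ghost; that is, $\tilde a$ is a tangible root of $p$. The assignment $[a_1:a_2] \mapsto \tilde a$ is injective on the set of common projective roots in this chart.

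Finally, the 2-ordinary hypothesis pulls back along the $\tilde\lm$-chart: a 2-ordinary common projective root has a projective open neighborhood containing no other common roots, which translates to $\tilde a$ lying in a tangible open interval of $F$ containing no other tangible roots of $p$, so $\tilde a$ is an ordinary tangible root of $p$. Combining this with the degree bound $\deg_{\tilde\lm} p \le mn$ yields the theorem. The main obstacle will not be the algebra, since the degree calculation is pure bookkeeping and the tangible-root criterion follows cleanly from \thmref{thm:resOfTan}; rather, it is the geometric transport one must verify: that the 2-ordinary hypothesis in the projective plane really pulls back to the one-variable notion of an ordinary tangible root of $p$, and that a suitable generic coordinate change eliminates the points at infinity without introducing new common roots.
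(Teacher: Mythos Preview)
Your approach is essentially the paper's: view $|\res(f,g)|$ as a polynomial $p(\tilde\lm)$ of degree at most $mn$, show each 2-ordinary common projective root yields a distinct (ordinary) tangible root of $p$, and invoke the degree bound. Your explicit permanent computation for $\deg_{\tilde\lm}p\le mn$ and your use of the converse direction of \thmref{thm:resOfTan} (ghost resultant $\Rightarrow$ common tangible root) to transport 2-ordinariness to ordinariness of the root of $p$ are slightly more detailed than the paper's write-up, which simply asserts that $(\tilde\lm + x_i/y_i)$ e-divides $p$ and that $\deg p = mn$.

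The one point where you diverge is the phrase ``after a generic linear change of coordinates.'' In the supertropical setting there is no subtraction, so the classical $\mathrm{GL}_2$ does not act, and a generic linear change is not available. The paper handles this with the two specific tools developed earlier: an additive translation (Remark~\ref{transf1}(ii)) to arrange $y_i\ne\fzero$, and a partial Frobenius morphism (Remark~\ref{Frobmor}) to separate the ratios $x_i/y_i$. You should replace your generic coordinate change with these; once that is done, your argument matches the paper's and your identification of the ``geometric transport'' as the only nontrivial step is accurate.
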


\begin{proof}
Assume that the tangible points $(x_i,y_i)$ lie on each root set,
$C_f$ and $C_g$, defined respectively by the roots of $f$ and $g$,
for $i = 1,\dots, mn+1$. After a suitable additive translation,
cf.~Remark~\ref{transf1}, we may assume that each $y_i \neq
\fzero$. Then, after a suitable Frobenius morphism
(Remark~\ref{Frobmor}), we may assume that the $\frac {x_i}{y_i}$
are distinct (as well as finite). Let $\tilde \la = \frac
{\la_1}{\la_2},$ and view $f,g$ as polynomials in $R[\la_2]$,
where
 $R = F[\tilde \la].$

 Viewing $\abs{\res(f,g)}$  in $R = F[\tilde \lm]$, one sees that for
any specialization $\tilde f$ and $\tilde g$  given by $\tilde \lm
\mapsto x_i/y_i$, $\tilde f$~and $\bar g$ have the common
2-ordinary root $y_i$,    and thus their resultant is ghost. In
other words $(\tilde \lm + x_i / y_i)$ $e$-divides
$\abs{\res(f,g)}$ for each $i = 1,\dots,mn+1$. Hence
$\deg\(\res(f,g)\) > mn+1$. But by definition $\abs{\res(f,g)}$
has degree $mn$ --  a contradiction.
\end{proof}

\section{Supertropical divisibility and the Nullstellensatz}

In \cite[Theorem 6.16]{IzhakianRowen2007SuperTropical} we proved a
Nullstellensatz involving tangible polynomials. In order to
formulate a more comprehensive version, we need a more general
notion of divisibility. Suppose $R$ is a semiring with ghosts,
which satisfies supertropicality (Note \ref{supertr}).

\begin{defn}\label{superdiv} An element $g\in R$ \textbf{supertropically divides} $f\in R$
if $f+qg$ is ghost with $(f+qg)^\nu = f^\nu$, for a suitable
    $q\in \tT$.
\end{defn}

\begin{example} Let $R = F[\lambda]$, and  consider the polynomial $f = \la^2 + 6 ^\nu \la +7$, whose tangible root set is the
interval $[1,6]$.
\begin{enumerate} \eroman
    \item If $g = \la  + 4,$ whose tangible root set is $\{ 4 \},$ then
$$f + (\la + 3)g = f + \la^2 + 4 \la +7$$ is ghost. \pSkip

    \item $g = \la^2 + 4 ^\nu \la +6$, whose tangible root set is the
interval $[2,4]$, then $$f^2 +(\la^2+8)g  =  (\la^4 + 6 ^\nu \la^3
+ 12^\nu \la^2 +13^\nu \la + 14)+(\la^4 +4^\nu \la^3 + 8 \la^2 +
12^\nu \la + 14),$$ which is ghost.
\end{enumerate}

\end{example}

\begin{example}\label{quad1} Suppose $f = \la^2 + a_2 ^\nu \la
+a_1a_2,$ for $a_1, a_2$ tangible. Then, for $a$ tangible, $\la
+a$ supertropically divides $f$ iff $a_1 ^\nu \le a ^\nu \le a_2
^\nu.$ Indeed, for the constant term of $f+(\la+a)q$ to be ghost,
we must have $(aq)^\nu = (a_1a_2)^\nu.$ The coefficient of $\la$
shows that $\max\{q^\nu, a^\nu\} < a_2^\nu,$ and thus
$\min\{q^\nu, a^\nu\} < a_1^\nu.$\end{example}

\begin{defn}  Suppose $A \subset R $.
 The \textbf{supertropical radical} $\tsqrt{A}$ is defined as
 the set $$\{ a \in R : \text{ some power}  \quad a^k\quad   \text{supertropically divides
 an element of } A \},$$
  which in other words is
  $$\{ a \in R : (a^k+b)^\nu = (a^k)^\nu \text{  and } a^k +b\in \tGz,
  \text{ for some } b\in A \text{ and some } k \in \Net ^+
  \}.$$
  An ideal $A$ of $R$ is \textbf{supertropically radical} if
$A = \tsqrt{A}.$
\end{defn}

\begin{rem}\label{surprise1} If $A$ is an ideal of a commutative semiring $R$, then
$\tsqrt A\triangleleft R$. Indeed, if $a_1^{k_1}+b_1 \in \tGz$ and
$a_2^{k_2}+b_2 \in \tGz$, then by the Frobenius map,
$$(a_1+a_2)^{k_1k_2} + b_1^{k_2} + b_2^{k_1} = a_1^{k_1k_2} + b_1^{k_2} +a_2^{k_1k_2} + b_2^{k_1}=  (a_1^{k_1 } + b_1)^{k_2} +(a_2^{ k_2} + b_2)^{k_1}
$$
which is ghost, of the same $\nu$-value as $a_1^{k_1k_2} +
a_2^{k_1k_2} = (a_1+a_2)^{k_1k_2}$. Likewise, for all $r$ in $R$,
$(ra_1)^{k_1}+r^{k_1}b_1 \in \tGz$, of the same $\nu$-value as
$(ra_1)^{k_1}$.

\end{rem}

By the same sort of argument, if $R$ is a commutative
supertropical \emph{semiring} and $A$ is a sub-semiring of
$\FunR$, then $\tsqrt A$ is also a sub-semiring of $\FunR$.

\subsection{The comprehensive supertropical Nullstellensatz}

The comprehensive supertropical version of the Hilbert
Nullstellensatz is as follows (with the same proof as in
\cite[Theorem 6.16]{IzhakianRowen2007SuperTropical}).

For a polynomial
 $f\in F[\lm_1, \dots, \lm_n],$ we define the set
$$D_f = \{ \bfa = (a_1, \dots, a_n) \in \tTz^{(n)} : f(\bfa) \in \tT  \};$$
thus  $\tTz^{(n)}\setminus D_f$ is the set of tangible roots of
$f$ in $\tTz^{(n)}.$  Refining this definition, writing $f = \sum
f_\bfi$, a sum of monomials,  $D_{f,\bfi}$  is defined to be
$$D_{f,\bfi} = \{\bfa = (a_1, \dots, a_n) \in \tTz^{(n)} : f(\bfa) = f_\bfi (\bfa)
\in \tT \}.$$ Therefore, $\bfa \in D_{f,\bfi}$ iff $f$ has the
monomial $f_\bfi$ with $f_\bfi(\bfa)$ tangible, and
$f_\bfi(\bfa)^\nu> f_\bfj(\bfa)^\nu$ for all $\bfj \ne \bfi;$
hence $D_f = \bigcup _{\bfi} D_{f,\bfi}.$

 Clearly, the $D_{f,\bfi}$ are open sets, and each has a finite number of
connected components~$D_{f,\bfi_u},$  for $1 \le u \le t= t_\bfi$,
called  the \textbf{irreducible components} of $f$. Note that
$D_f$ is the disjoint union of the~$D_{f,\bfi_u}$.

Given an irreducible component $D $ of $f$, we write $f \preceq _D
g$ if $g$ has an irreducible component containing~$D;$ $f \epsc S$
for $S \subseteq F[\lm_1,\dots,\lm_n],$ if for every irreducible
component $D = D_{f,\bfi_u}$ of $f$ there is some $g\in S$
(depending on $D_{f,\bfi_u}$) with $f \preceq _D g$.

We recall \cite[Definition 6.12]{IzhakianRowen2007SuperTropical}.

\begin{defn}\label{epst1}
Given an irreducible component $D $ of $f$, we write $f \preceq _D
g$ if $g$ has an irreducible component containing $D;$ $f \epsc S$
for $S \subseteq F[\Lambda],$ if for every irreducible component
$D = D_{f,\bfi_u}$ of $f$ there is some $g\in S$ (depending on
$D_{f,\bfi_u}$) with $f \preceq _D g$.

Also, define  the \textbf{dominant monomial} of $f$ on the
irreducible component $D$, denoted $f_D$, to be that monomial
$f_\bfi$ such that $f(\bfa) = f_\bfi(\bfa) \in \tT $ for every
$\bfa \in D.$
\end{defn}

\begin{thm}\label{Null1} \textbf{(Comprehensive supertropical Nullstellensatz)}
Suppose $F$ is a  connected, $\Net$-divisible, supertropical
semifield,  $A \triangleleft F[\la_1, \dots, \la _n],$ and $f \in
F[\la_1, \dots, \la _n].$   Then $f \epsc A$ iff $f \in
\tsqrt{A}.$
\end{thm}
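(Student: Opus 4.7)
The plan is to adapt the argument in \cite[Theorem~6.16]{IzhakianRowen2007SuperTropical}, treating the two implications separately. Both hinge on isolating, on each irreducible component $D = D_{f,\bfi_u}$ of $f$, the unique tangible monomial $f_\bfi$ that determines the behavior of $f$ there, and on exploiting that $f$ has only finitely many irreducible components.

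For $(\Leftarrow)$, fix $k \ge 1$ and $b \in A$ with $f^k + b \in \tGz$ and $(f^k+b)^\nu = (f^k)^\nu$; the claim is that $f \epsc A$ with witness $g = b$. Pick any irreducible component $D$ of $f$, with dominant tangible monomial $f_\bfi = \alpha_\bfi \la^\bfi$. On $D$, $f^k$ coincides as a function with the single tangible monomial $f_\bfi^k$, so the ghost condition forces $b(\bfa)^\nu = f_\bfi(\bfa)^{k\nu}$ for every $\bfa \in D$. Expanding $b$ into monomials, only the term of multi-degree $k\bfi$ whose coefficient $\nu$-matches $\alpha_\bfi^k$ (call it $b_{k\bfi}$) can supply this $\nu$-value on an open subset. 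Every other monomial $b_\bfh$ must then satisfy $b_\bfh(\bfa)^\nu \le b_{k\bfi}(\bfa)^\nu$ throughout $D$, and since two monomials of distinct multi-degrees coincide in $\nu$-value only on a set with empty interior, the inequality is strict on all of $D$. Hence $D \subseteq D_{b,k\bfi}$, and connectedness of $D$ places it inside a single connected (irreducible) component of $b$, yielding $f \preceq_D b$.

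For $(\Rightarrow)$, enumerate the finitely many irreducible components $D_1,\dots,D_s$ of $f$ with dominant monomials $f_{\bfi_t} = \alpha_t \la^{\bfi_t}$, and for each $t$ pick $g_t \in A$ with an irreducible component $D_t' \supseteq D_t$ on which a tangible monomial $g_{t,\bfj(t)}$ dominates. The goal is to construct a single $b \in A$ and an exponent $k$ so that $f^k + b$ is ghost of value $(f^k)^\nu$. For each $t$, choose $k$ large and a monomial multiplier $h_t$ so that $h_t g_t \in A$ (by the ideal property) has on $D_t$ a dominant monomial whose $\nu$-value equals $f^k(\bfa)^\nu$; concretely, $h_t$ is a monomial with exponent vector $k\bfi_t - \bfj(t)$ and $\nu$-valued coefficient matching $\alpha_t^k$ divided by the coefficient of $g_{t,\bfj(t)}$. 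Setting $b := \sum_t h_t g_t$, on each $D_t$ the $t$-th summand dominates and forces $b(\bfa)^\nu = f^k(\bfa)^\nu$, so $f^k + b$ is ghost of the correct value on $\bigcup_t D_t$.

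The main obstacle is twofold. First, one needs each shift vector $k\bfi_t - \bfj(t)$ to have nonnegative entries so that $h_t$ is an honest polynomial; this forces $k$ to be chosen very large and uses the fact that, since $D_t \subseteq D_t'$, the exponent patterns of $f_{\bfi_t}$ and $g_{t,\bfj(t)}$ must be compatible on the monomial support. Second, and more delicately, one must ensure $b(\bfa)^\nu \le f^k(\bfa)^\nu$ outside $\bigcup_t D_t$ so that the ghost identity $(f^k+b)^\nu = (f^k)^\nu$ holds globally; this relies on continuity of polynomials and the density principle of Remark~\ref{Zarden} to extend pointwise bounds from the irreducible components to their closure. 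This global bookkeeping, analogous to the exponent choice in the classical Hilbert Nullstellensatz, is where the bulk of the technical effort goes.
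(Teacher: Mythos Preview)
Your outline for $(\Rightarrow)$ follows the same template as the paper's, but misses the one new ingredient that separates the comprehensive version from the tangible case of \cite[Theorem~6.16]{IzhakianRowen2007SuperTropical}. The paper does not work with the irreducible components $D_1,\dots,D_s$ of $f$; it works with those of the \emph{tangible} polynomial $\hat f$, splitting them into ``true'' components (contained in $D_f$) and ``fictitious'' ones (open regions where a ghost monomial of $f$ dominates, so $f$ is ghost there even though $\hat f$ is tangible). This matters precisely at your second obstacle. You propose to push the bound $b^\nu \le (f^k)^\nu$ from $\bigcup_t D_t$ to all of $F^{(n)}$ via continuity and Remark~\ref{Zarden}, but $\bigcup_t D_t = D_f$ is in general not dense in $\tT^{(n)}$: whenever $f$ has an essential ghost monomial, $f$ is ghost on a nonempty open set, so there is nothing dense to extend from. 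By contrast $D_{\hat f}$ is dense. On the fictitious components no $g_t$ is supplied by the hypothesis $f \epsc A$; the paper instead invokes \cite[Lemma~6.14]{IzhakianRowen2007SuperTropical} and an induction across neighboring components to show that a sufficiently high power $f^m$ dominates each adjusted $g_k$ there, and since $f^m$ is already ghost on those regions this yields $(f^m + \sum_k g_k)^\nu = (f^m)^\nu$ with $f^m + \sum_k g_k$ ghost. Your construction $b = \sum_t h_t g_t$ gives no control on the fictitious regions. (As a secondary difference, the paper matches dominant monomials on each $D_k$ by replacing $f$ and $g_k$ with suitable powers and scaling $g_k$ by an element of $\tT$, rather than by a monomial shift $h_t$; your nonnegativity concern for $k\bfi_t - \bfj(t)$ does not arise in that setup.)

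For $(\Leftarrow)$ the paper gives no details, calling it clear. Your argument has a gap: after showing that the monomial $b_{k\bfi}$ strictly dominates the other monomials of $b$ on $D$, you conclude $D \subseteq D_{b,k\bfi}$. But membership in $D_{b,k\bfi}$ also requires $b(\bfa) \in \tT$, i.e., the coefficient of $b_{k\bfi}$ must be tangible; you have not established this, and if that coefficient is ghost then $b$ is ghost throughout $D$ and $b$ fails to witness $f \preceq_D b$.
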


\begin{proof} We review the proof in \cite[Theorem 6.16]{IzhakianRowen2007SuperTropical}. Again, the direction $(\Leftarrow)$ is clear, so we prove $(\Rightarrow)$. Let $\hat f$
denote the tangible polynomial having the same $\nu$-value as $f$.
Write $D_{\hat f}$ as the disjoint union of the irreducible
components $D_{\hat f,\bfi}$ of the complement set of
$Z_{\operatorname{tan}}(\hat f)$, which we number as $D_1, \dots,
D_q.$ Some of these remain as components of  the complement set of
$Z_{\operatorname{tan}} (f)$; we call these components ``true''.
Other components are roots of $f$ (because of its extra ghost
coefficients) and thus belong to $Z_{\operatorname{tan}} (f)$; and
 we call these components ``fictitious.'' For each true
component $D_k$ take a polynomial $g_k\in A$ with an irreducible
component $D'$ containing $D_k.$ Let $g_{k,\bfj}=
 \beta_\bfj \la_1^{j_1}\cdots \la_n^{j_n}$, $\bfj = (j_1,\dots, j_n)$, be the dominant
monomial of $g_k$ on $D'$.

 At any stage, we may replace $f$ by a
power $f^m$ (and, if necessary, $g_k$ by $g_k^m$ times some
element of $\tT$), for this does not affect its irreducible
components; we do this where convenient.

As in the proof in \cite[Theorem
6.16]{IzhakianRowen2007SuperTropical}, on each true $D_k$, $f_k=
g_{k,\bfj}$.
 But \cite[Lemma
6.14]{IzhakianRowen2007SuperTropical}, and induction on the number
of (possibly fictitious) components separating $D_k$ from the
other components still  shows that $f^{m'}$ dominates $g_k$ on
each  component. Now we apply the same argument to all true
neighbors, and continue until we have taken into account all of
the (finitely many) true components. The proof is then completed
by taking $ m
> \max_k \{ m_k + m'_k \}$; then $f^m = \sum_k g_k \in A$ on the
true components and $f^m$ dominates $ \sum_k g_k $ on the
fictitious components, implying $(f^m)^\nu = ( f^m +\sum_k
g_k)^\nu$ and $  f^m +\sum_k g_k$ is ghost, as desired.
\end{proof}

\subsection {B\'{e}zout's Theorem revisited}

Theorem \ref{thm:Bezout} probably can be generalized to the
non-tangible situation, but we do not yet have a full proof:

\begin{conjecture}
 [\textbf{Supertropical
B\'{e}zout conjecture}]\label{thm:Bezout2} Nonconstant polynomials
$f,g$ in $F[\lm_1,\lm_2]$  cannot have more than $mn$ tangible
connected components in the intersection of their sets of tangible
roots, where $m = \deg(f)$ and $n = \deg(g)$.
\end{conjecture}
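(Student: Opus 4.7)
\medskip
\noindent\textbf{Proof proposal.}
The plan is to mimic the argument for Theorem \ref{thm:Bezout}, now using a single representative from each tangible connected component of $C_f \cap C_g$ in place of each $2$-ordinary common root. Write $\tilde \lm = \lm_1/\lm_2$, view $f$ and $g$ as elements of $F[\tilde \lm][\lm_2]$, and set $p(\tilde \lm) := |\res(f,g)| \in F[\tilde \lm]$. As in the proof of Theorem \ref{thm:Bezout}, $\deg(p) = mn$, so it suffices to exhibit, for any tangible connected components $K_1,\dots,K_{mn+1}$ of $C_f \cap C_g$, pairwise $\nu$-distinct tangible scalars $c_1,\dots,c_{mn+1}$ with $(\tilde\lm + c_i)$ e-dividing $p$; then $\deg(p) \ge mn+1$ gives a contradiction.

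First I would pick an arbitrary tangible representative $(x_i,y_i) \in K_i \cap \tT^{(2)}$, apply a suitable additive translation (Remark \ref{transf1}) to ensure every $y_i \ne \fzero$, and then apply a partial Frobenius morphism (Remark \ref{Frobmor}) chosen so that the ratios $c_i := x_i/y_i$ become pairwise $\nu$-distinct. Since these transformations are morphisms of root sets, and in fact isomorphisms by Lemma \ref{Frobmor0} for the Frobenius, they preserve both the finiteness and the count of tangible connected components of the intersection, so no generality is lost.

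Given the separation, specialize $\tilde \lm \mapsto c_i$ to obtain polynomials $\tilde f_i, \tilde g_i \in F[\lm_2]$ sharing the common tangible root $y_i$. By Theorem \ref{thm:resOfTan}, $|\res(\tilde f_i,\tilde g_i)| = p(c_i) \in \tGz$. Outside the finitely many components $K_j$, the polynomials $f$ and $g$ have no common tangible root, so again by Theorem \ref{thm:resOfTan} the specialization of $p$ is tangible on a dense subset of $F$; thus the hypothesis of Lemma \ref{rootd} is satisfied at each $c_i$, and $(\tilde\lm + c_i)$ e-divides $p$. With $mn+1$ distinct linear e-factors of $p$, the degree count is violated.

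The main obstacle is the separation step. A single tangible connected component $K_i$ may project under $(x,y) \mapsto x/y$ to an entire interval rather than a point, and a priori the projections of distinct components can overlap. The hard part will be to show that the parameter space of (additive translation, Frobenius exponent) pairs contains a nonempty open subset on which all ${mn+1}$ ratios $c_i$ become pairwise $\nu$-distinct; each collision $c_i^\nu = c_j^\nu$ should cut out a proper closed condition, so a counting argument over the finitely many pairs $(i,j)$ ought to suffice. A secondary subtlety is to verify that applying a Frobenius morphism cannot merge two previously disjoint tangible components of the intersection; this should follow from injectivity of $a \mapsto a^{1/k}$ on tangible elements together with the continuity of the morphism in the $\nu$-topology.
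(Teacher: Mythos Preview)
This statement is a \emph{conjecture} in the paper; the paper explicitly says it does not yet have a full proof, and only sketches a proposed method.  So the relevant comparison is between your proposal and the paper's proposed method, and both get stuck---but at different places, and your gap is earlier.

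Your argument breaks at the sentence ``the specialization of $p$ is tangible on a dense subset of $F$; thus the hypothesis of Lemma~\ref{rootd} is satisfied at each $c_i$.''  You yourself observe, two paragraphs later, that a component $K_i$ can project under $(x,y)\mapsto x/y$ to a whole interval $I_i$.  But then for \emph{every} $c\in I_i$ the specialized pair $\tilde f,\tilde g$ has a common tangible root (namely the second coordinate of a point of $K_i$ lying over $c$), so by Theorem~\ref{thm:resOfTan} the resultant $p(c)$ is ghost for \emph{all} $c\in I_i$.  Hence $p$ fails to be tangible on the open interval $I_i$, and the second hypothesis of Lemma~\ref{rootd} (that every tangible open interval meet $\tT_p$) is violated.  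Consequently $c_i$ is not an ordinary root of $p$, and you do \emph{not} get e-divisibility by $(\tilde\lambda+c_i)$.  Your ``main obstacle'' paragraph misidentifies the problem as merely separating the chosen ratios $c_i$ from one another; even if all the $c_i$ are $\nu$-distinct and the intervals $I_i$ are pairwise disjoint, the e-divisibility step still fails for every $c_i$ lying in a nondegenerate $I_i$.  No amount of additive translation or partial Frobenius will collapse a two-dimensional component (or a curved one-dimensional component) to a single ray through the origin, so this is not a difficulty you can transform away.

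The paper's proposed route is to replace e-divisibility by \emph{supertropical} divisibility via Lemma~\ref{rootd1}, which is designed exactly for roots that fill an interval.  That step goes through; the obstruction the paper identifies is the next one: supertropical divisors do not multiply.  The paper gives the explicit counterexample $f=\lambda^2+6^\nu\lambda+7$ with $a_1=2$, $a_2=3$, $a_3=5$, where each $\lambda+a_i$ supertropically divides $f$ but $\prod_i(\lambda+a_i)$ does not, so the degree count $mn+1>\deg p$ cannot be run.  In short, your proposal founders before reaching the genuine difficulty; the paper's proposal reaches it and stops there.
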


Here is a proposed method to prove this conjecture. First, we need
a more comprehensive version of Lemma \ref{rootd}.

\begin{lemma}\label{rootd1}  Suppose $f \in F[\la_1, \dots, \la_n]$, and $W$ is a
component of $Z_{\operatorname{tan}}(f)$. If the projection of $W$
on the $k$ component is a closed interval $[b_1, b_2]$ where
$b_1^\nu < b_2^\nu,$ then for any $a$ with $b_1^\nu \le a^\nu \le
b_2^\nu]$, the polynomial $(\la_k +a)$ supertropically divides
$f$.
\end{lemma}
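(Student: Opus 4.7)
By the symmetry of notation in the variables, reduce to $k = n$, and write $f = \sum_{i=0}^m f_i \la_n^i$ with $f_i \in F[\la_1, \dots, \la_{n-1}]$. The plan is to induct on $n$, handling the base case via one-variable factorization theory and extending to higher $n$ by combining slice-wise analysis with Zariski density.

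For the base case $n = 1$, the component $W = [b_1, b_2]$ with $b_1^\nu < b_2^\nu$ is an interval in $Z_{\operatorname{tan}}(f)$. Apply the canonical factorization of Theorem~\ref{factorpol}, $f = \ft \fm$, and invoke Remark~\ref{rmk:irr} to identify $W$ as the tangible root set of a single irreducible nontangible factor $p$ of $\fm$. Since $W$ has positive $\nu$-length, $p$ is either an irreducible quadratic of the form $\la^2 + b_2^\nu\lm + b_1 b_2$ or a linear left/right ghost (in the unbounded case). The quadratic case is precisely Example~\ref{quad1}, which directly establishes that $\la + a$ supertropically divides $p$ for every $a$ with $b_1^\nu \le a^\nu \le b_2^\nu$; the ghost-linear cases are similar and easier. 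The conclusion then lifts multiplicatively from $p$ to $f$: writing $f = h p$ for $h = f/p$, if $p + q_0(\la + a)$ is ghost with $\nu$-value $p^\nu$, then
\[
f + (h q_0)(\la + a) \; = \; h \cdot \bigl(p + q_0(\la + a)\bigr)
\]
is ghost with $\nu$-value $h^\nu p^\nu = f^\nu$, which is the required supertropical divisibility.

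For the inductive step $n > 1$, fix $a$ with $b_1^\nu \le a^\nu \le b_2^\nu$, and build a candidate $q \in F[\la_1, \dots, \la_n]$ of degree $\le m - 1$ in $\la_n$ via the iterative ``tropical division'' formula $\hat q_{m-1} = \hat f_m$, $\hat q_{j-1} = \hat f_j + a\, \hat q_j$. The verification that $f + q(\la_n + a)$ is coefficient-wise ghost with $\nu$-value matching $f$ reduces, at each $\bfc$ in the projection of $W$ onto $F^{(n-1)}$, to supertropical divisibility of the one-variable polynomial $f(\bfc, \la_n)$ by $\la_n + a$, which is supplied by the base case applied to the slice. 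The main obstacle, and the step I expect to be the most delicate, is globalizing this pointwise verification into coefficient-wise $\nu$-matching identities on $F[\la_1, \dots, \la_{n-1}]$; this should follow from a Zariski density argument (Remark~\ref{Zarden}), combined with the fullness of $f$ and the connectedness and positive $\la_n$-extent of $W$ (which together propagate the required dominance inequalities among the $\hat f_i$), but making this step fully rigorous, and in particular ruling out ``fictitious'' obstructions arising from non-essential monomials, is where I expect the real work to lie.
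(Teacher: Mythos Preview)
Your base case is essentially the paper's: invoke the one-variable factorization (Theorem~\ref{factorpol} and Remark~\ref{rmk:irr}) to isolate the irreducible quadratic carrying the interval $[b_1,b_2]$, apply Example~\ref{quad1}, and lift multiplicatively to $f$. That part is fine and, if anything, more carefully spelled out than in the paper.

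The inductive step, however, takes a genuinely different route from the paper, and the gap you flag is real. The paper does \emph{not} expand $f$ in powers of the distinguished variable $\la_k$; instead it expands in a \emph{different} variable, writing $f = \sum_i f_i \la_1^i$ with $f_i \in F[\la_2,\dots,\la_n]$ (so each $f_i$ still involves $\la_k$). Specializing $\la_k \mapsto a$ gives $f^a = \sum_i f_i^a \la_1^i$, and since the summands $f_i^a \la_1^i$ carry distinct powers of $\la_1$ they are $\nu$-distinct on an open set; Lemma~\ref{helpgh} then forces each $f_i^a$ to be ghost on some nonempty open interval of $F^{(n-2)}$. This puts one inside the hypotheses of the lemma for $f_i$ (in $n-1$ variables, still containing $\la_k$), so by induction $(\la_k + a)$ supertropically divides each $f_i$, and one assembles these coefficient-wise witnesses $q_i$ into $q = \sum_i q_i \la_1^i$ for $f$ itself.

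Your approach---expanding in $\la_n=\la_k$, building $q$ by tropical long division, and verifying on slices---runs into exactly the obstacle you anticipated. The base case only hands you, for each $\bfc$ in the projection of $W$, \emph{some} $q_\bfc(\la_n)$ that works; there is no a priori reason it coincides with your global candidate $q(\bfc,\la_n)$. And the Zariski density rescue (Remark~\ref{Zarden}) does not apply: the projection of a single connected component $W$ onto $F^{(n-1)}$ is generally a proper closed region, not a dense subset, so pointwise verification there says nothing about the coefficient polynomials globally. The paper's trick of expanding in a variable \emph{other than} $\la_k$ sidesteps this entirely, because then the inductive hypothesis is applied to coefficients that still see $\la_k$, and the assembly step is purely formal.
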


\begin{proof}
 The case $n =1$ follows from
\cite[Proposition~7.47]{IzhakianRowen2007SuperTropical}, since the
assertion is that for each element $a$ in the interval
$[b_1,b_2],$ the polynomial $f$ specializes to a constant that is
a ghost on a tangible open interval, and thus is a ghost; i.e.,
$a$ is a root of $f$ in $F[\la_1]$, in view of Example
\ref{quad1}. Thus, we may assume $n>1.$ Write $f = \sum_i f_i
\la_1^i,$ for $f_i \in F[\la_2, \dots, \la_{n}].$ In particular,
$f^a = \sum_i f_i^a \la_1^i.$ But the $f_i^a \la_1^i$ are
$\nu$-distinct on some open interval since they involve different
powers of $\la_1$. Hence, by Lemma ~\ref{helpgh}, each $f_i^a
\la_1^i$ (and thus $f_i$) is ghost on some nonempty open interval,
so by induction $(\la_k +a)$ supertropically divides each $f_i$,
and thus also supertropically divides~$f$.
\end{proof}

The difficulty in completing the proof along the lines of the
proof of Theorem~\ref{thm:Bezout} is that one cannot say for $\la
+a_i$  supertropically dividing $f$ for $1\le i \le t$ that also
$\prod _j (\la +a_j)$ supertropically divides $f$. For example,
this is false for $f = \la^2 + 6 ^\nu \la +7$, $a_1 = 2,$ $a_2 =
3,$ and $a_3 = 5$. The reason is that   these roots all lie on the
same connected component. Presumably, one may be able to complete
the proof by counting the number of connected components of the
complement set of the resultant, with respect to a suitable
projection onto the line.



\end{document}